\theoremstyle{plain}
\newtheorem{theorem}{Theorem}[section]
\newtheorem{proposition}[theorem]{Proposition}
\newtheorem{lemma}[theorem]{Lemma}
\newtheorem{corollary}[theorem]{Corollary}
\theoremstyle{definition} 
\newtheorem{definition}[theorem]{Definition}
\theoremstyle{remark}
\newtheorem{remark}[theorem]{Remark}
\newcommand{\RomanNumeralCaps}[1] {\MakeUppercase{\romannumeral #1}}
\newcommand{\R}{\mathbb{R}}	
\newcommand{\vol}{\operatorname{vol}}
\newcommand{\II}{\operatorname{ \RomanNumeralCaps{2}}}
\newcommand{\spa}{\mathrm{span}}
\newcommand{\hess}{\operatorname{Hess}}
\newcommand{\distr}{D}
\newcommand{\tb}{\mathrm{tb}}
\newcommand{\tr}{\operatorname{tr}}
\newcommand{\dive}{\operatorname{div}}	
\newcommand{\eps}{\varepsilon}
\newcommand{\sign}{\operatorname{sign}}
\newcommand{\area}{\operatorname{Area}}
\newcommand{\contact}{\omega}
\setlist[itemize]{noitemsep} 	
\providecommand{\keyword}[1]
{
  \small	
  \textbf{Keywords:} #1
}
\newcommand{\mysetminusD}{\hbox{\tikz{\draw[line width=0.6pt, line cap=round] (3pt,0) -- (0,6pt);}}}
\newcommand{\mysetminusT}{\mysetminusD}
\newcommand{\mysetminusS}{\hbox{\tikz{\draw[line width=0.45pt, line cap=round] (2pt,0) -- (0,4pt);}}}
\newcommand{\mysetminusSS}{\hbox{\tikz{\draw[line width=0.4pt, line cap=round] (1.5pt,0) -- (0,3pt);}}}
\newcommand{\mysetminus}{\mathbin{\mathchoice{\mysetminusD}{\mysetminusT}{\mysetminusS}{\mysetminusSS}}}		
\title[Induced distance on surfaces in 3D contact sub-Riemannian manifolds]{On the induced geometry on surfaces in\\3D contact sub-Riemannian manifolds}
\date{\today}
\author{Davide Barilari}
\address{Davide Barilari, Dipartimento di Matematica "Tullio Levi-Civita", Università di Padova, Via Trieste 63, Padova, Italy.}
\email{barilari@math.unipd.it}
\author{Ugo Boscain}
\address{Ugo Boscain, CNRS, Laboratoire Jacques-Louis Lions, team Inria CAGE, Universit\'e de Paris, Sorbonne Universit\'e boîte courrier 187, 75252 Paris Cedex 05 Paris France}
\email{ugo.boscain@upmc.fr}
\author{Daniele Cannarsa}
\address{Daniele Cannarsa, Universit\'e de Paris, Sorbonne Universit\'e, CNRS, Inria, Institut de Math\'ematiques de Jussieu-Paris Rive Gauche, F-75013 Paris, France}
\email{daniele.cannarsa@imj-prg.fr}
\begin{document}

\maketitle


\begin{abstract}
Given a surface $S$ in a 3D contact sub-Riemannian manifold $M$, we investigate the metric structure induced on $S$ by $M$, in the sense of length spaces.
 First,  we define a coefficient $\widehat K$ at characteristic points that determines locally the characteristic foliation of $S$.
Next, we identify some global conditions for the induced distance to be finite. 
In particular, we prove that the  induced distance is finite for surfaces with the topology of a sphere embedded in a tight  coorientable distribution,  with isolated characteristic points.
\end{abstract}
\bigskip
 \keyword{contact geometry, sub-Riemannian geometry, length space, Riemannian approximation, Gaussian curvature, Heisenberg group.}
 
\setcounter{tocdepth}{1}

\tableofcontents


\section{Introduction}

	The study of the geometry of submanifolds $S$ of an ambient manifold $M$ with a given geometric structure is a classical subject.
	A familiar example, whose study goes back to Gauss, is that of a surface $S$ embedded in the Euclidean space $\R^3$.
In such case, $S$ inherits its natural Riemannian structure by restricting the metric tensor to the tangent space of $S$. 
The distance induced on $S$ by this metric tensor is not the restriction of the distance of $\R^3$ to points on $S$, but rather the length space structure induced on $S$ by the ambient space.

	Things are less straightforward for a smooth 3-manifold $M$ endowed with a contact sub-Riemannian structure $(D, g)$; here $D$ is a smooth contact distribution and $g$ is a smooth metric on it.
Indeed, for a two-dimensional submanifold $S$, the intersection $T_{x}S\cap D_{x}$ is one-dimensional for most points $x$ in $S$; thus, $TS\cap D$ is not a bracket-generating distribution and there is no well-defined sub-Riemannian  distance induced by $(M, \distr, g)$ on $S$. 
This fact is indeed more general, as already observed in \cite[Sec.~0.6.B]{Gromov1996}.

	Nevertheless, one can still define a distance on $S$ following the length space viewpoint: the sub-Riemannian distance $d_{sR}$ defines
the length of any continuous curve $\gamma: [0, 1] \to M$ as
\[
L_{sR}(\gamma) = \sup \left\{ {\sum}_{i=1}^{N} d_{sR}(\gamma(t_{i}),\gamma(t_{i+1})) \mid 0= t_{0}\leq \ldots\leq t_{N}=1 \right\},
\]
and one can define $d_{S}:S\times S \to  [0,+\infty]$ with
\[
d_{S}(x, y) = \inf \{L_{sR}(\gamma) \mid \gamma: [0, 1] \to S,~\gamma(0)=x,~\gamma(1)=y \}.
\]
The space $(S, d_{S})$ is called a \textit{length space}, and $d_{S}$ the \textit{induced distance} defined by $(M,d_{sR})$.
(In the theory of length metric spaces,  the induced distance $d_{S}$ is called intrinsic distance, emphasising that it depends uniquely on lengths of curves in $S$, see~\cite{BBI01}.) We stress that the induced distance $d_{S}$ is not the restriction $d_{sR}|_{S\times S}$  of the sub-Riemannian distance to $S$.
\\ 

	This paper studies necessary and sufficient conditions  on the surface $S$ for which the induced distance  $d_{S}$ is finite. i.e., $d_{S}(x, y) < +\infty$ for all points $x,y$ in $S$; this is equivalent to $(S,d_{S})$ being a metric space. 
	In the following lines we rephrase this property through the characteristic foliation of $S$. 
	
Recall that a curve $\bar \gamma$ is horizontal with respect to $D$ if it is Lipschitz, and its derivative $\dot{\bar\gamma}$ is in $D$ whenever defined.
Consider a continuous curve $\gamma:[0,1]\to S$. 
Its length is finite, i.e., $L_{sR}(\gamma)<+\infty$, if and only if  $\gamma$ is a reparametrisation of a curve $\bar \gamma$ horizontal with respect to $ D$; in such case, the length of $\gamma$ coincides with the sub-Riemannian length of $\bar\gamma$, i.e., the integral of $|\dot{\bar\gamma}|_{g}$.
We refer to \cite[Ch.~2]{BBI01} and \cite[Sec.~3.3]{ABB19} for more details.
Therefore, the distance $d_{S}(x,y)$ between two points $x$ and $y$ in $S$ is finite if and only if there exists a finite-length horizontal curve in $S$ with respect to $ D$ connecting the points $x$ and $y$.

	A point $p$ in $S$ is a \textit{characteristic point} if the tangent space $T_{p}S$ coincides with the distribution $D_{p}$.
The set  of characteristic points of $S$ is the \textit{characteristic set}, noted $\Sigma(S)$.
The characteristic set is closed due to the lower semi-continuity of the rank, and it cannot contain open sets since $D$ is bracket-generating.
Moreover, since the distribution $D$ is contact and $S$ is $C^{2}$, the set $\Sigma(S)$ is contained in a 1-dimensional submanifold of~$S$ (see Lemma~\ref{Rectifi-Sigma-S}) and, generically, it is composed of isolated points (see \cite[Par.~4.6]{geiges2008introduction}).

	Outside of the characteristic set, the intersection $TS\cap D$ is a one-dimensional distribution and  defines a regular one-dimensional foliation on $S\mysetminus\Sigma(S)$. 
This foliation extends to a singular foliation of $S$ by adding a singleton at every characteristic point. The resulting foliation is the \textit{characteristic foliation} of $S$. 
Note that any horizontal curve contained in $S\mysetminus\Sigma(S)$ stays inside a single one-dimensional leaf of the characteristic foliation. 

	In conclusion, the finiteness of $d_{S}$ is equivalent to the existence, for any two points in $S$, of a finite-length continuous concatenation of leaves of the characteristic foliation of $S$ connecting these two points. 

\begin{figure}[b!] \begin{center}
	\includegraphics[width=0.37\textwidth]{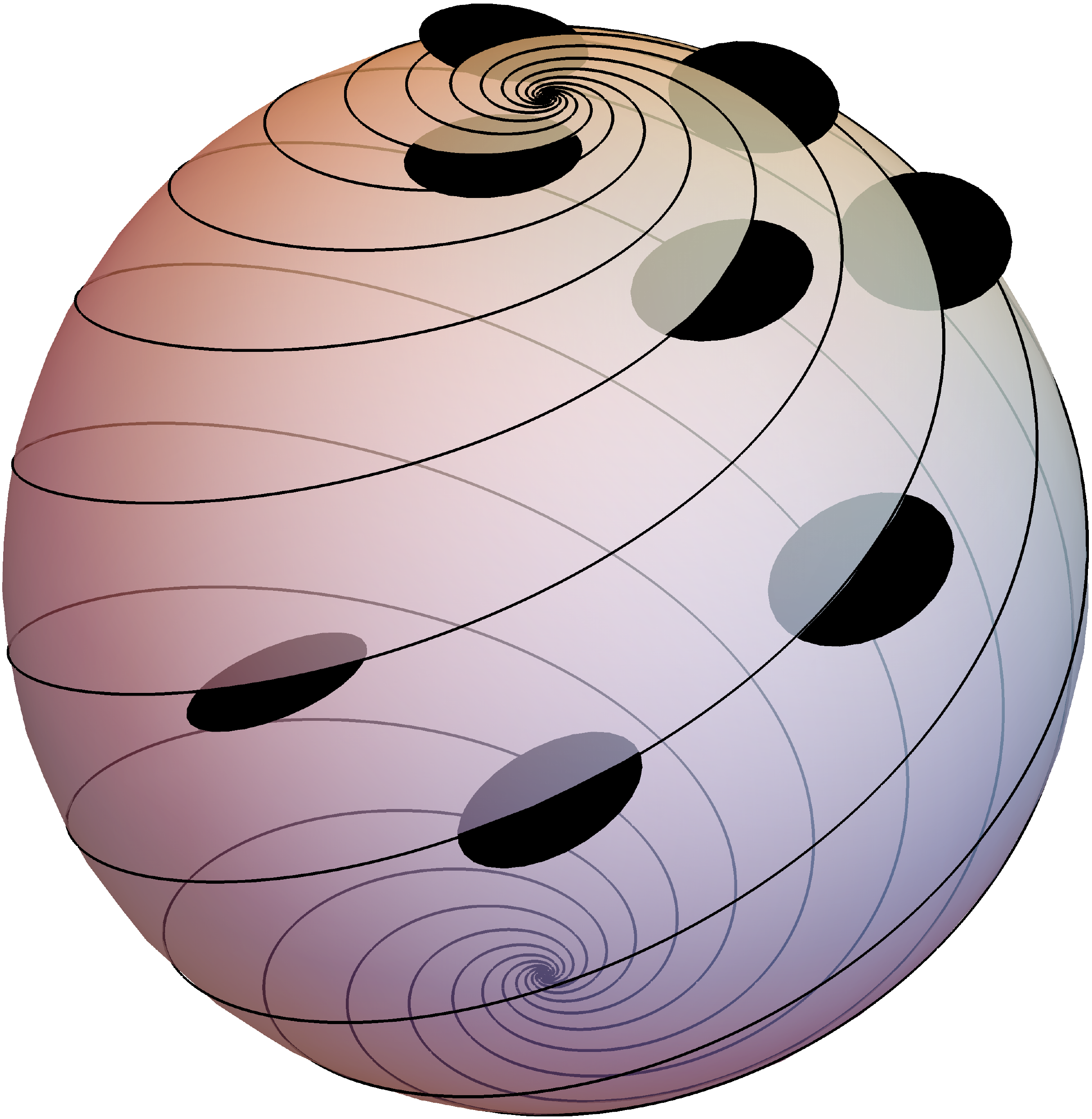}
	\caption{\label{img:sphere} \small 
		The characteristic foliation defined by the Heisenberg distribution $(\R^{3}, \ker(dz+\frac{1}{2}(y dx -x dy ))$ on an Euclidean sphere centred at the origin: any horizontal curve connecting points on different spirals goes though one of the characteristic points, at the North or the South pole.
		The sub-Riemannian length of the leaves spiralling around the characteristic points is finite because of Proposition~\ref{prop:leavesFiniteLength}. Thus, the induced distance $d_{S}$ is finite: this is a particular case of Theorem~\ref{thm:FiniteMetricSph}.
			}
\end{center}\end{figure}


\subsection{Main results}

	In this paper we prove two kind of results: local and global. 
On the local side, we are interested in the behaviour of the characteristic foliation around the characteristic points.
First, we use the Riemannian approximations of the sub-Riemannian space to associate with each characteristic point a real number.
Precisely, let $X_{0}$ be a vector field transverse to the distribution $D$ in a neighbourhood of a characteristic point $p\in \Sigma(S)$. 
Let $g^{X_{0}}$ be the Riemannian extension of $g$ for which
	\[
	\langle X_{0},D\rangle_{g^{X_{0}}}=0, \qquad |X_{0}|_{ g^{X_{0}}}=1.
	\]
The Riemannian metrics  $g^{\eps X_{0}}$, for $\eps>0$, are the \emph{Riemannian approximations} of $(D,g)$ with respect to $X_{0}$.	
Let $K^{X_{0}}$ be the Gaussian curvature of~$S$ with respect to $ g^{X_{0}}$, and let $B^{X_{0}}$ be the bilinear form $B^{X_{0}}:D\times D \to \R$ defined by 
	\[ 
	B^{X_{0}}(X,Y)=\alpha \qquad \mbox{if}\qquad [X,Y] = \alpha X_{0} \mod D.
	\]
Since $D$ is endowed with the metric $g$, the  bilinear form $B^{X_{0}}$ admits a well-defined determinant.  

\begin{theorem} \label{t:kconverge} 
Let $S$ be a $C^{2}$ surface embedded in a 3D contact sub-Riemannian manifold.
Let $p$ be a characteristic point of $S$, and let $X_{0}$ be a vector field transverse to the distribution $D$ in a neighbourhood of  $p$.
Then, in the notations defined above, the limit
	\begin{equation}  \label{defn:KS-as-limit}
	\widehat K_{p} = \lim_{\eps\to0}\frac{K_{p}^{\eps X_{0}}}{~\det B_{p}^{\eps X_{0}}~}
	\end{equation}
is finite and independent on the vector field $X_{0}$. 
\end{theorem}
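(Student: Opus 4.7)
The plan is to combine the Gauss equation with a careful asymptotic analysis of both the ambient sectional curvature and the second fundamental form of $S$ as $\eps \to 0$, exploiting the fact that $T_pS = D_p$ at the characteristic point.

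First, directly from the definition one gets the scaling $B^{\eps X_0} = \eps^{-1}B^{X_0}$ on $D$, so that $\det B^{\eps X_0}_p = \eps^{-2}\det B^{X_0}_p$. It therefore suffices to prove $K^{\eps X_0}_p = C(p)/\eps^2 + O(1)$ as $\eps \to 0$ and that $C(p)/\det B^{X_0}_p$ does not depend on $X_0$. Since $T_pS = D_p$ and $g^{\eps X_0}$ coincides with $g$ on $D$, a $g$-orthonormal basis $(X_1, X_2)$ of $D_p$ is also $g^{\eps X_0}$-orthonormal for $T_pS$ at $p$ for every $\eps>0$, and the unit $g^{\eps X_0}$-normal to $S$ at $p$ is $\pm \eps X_0|_p$. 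The Gauss equation then gives
\[
K^{\eps X_0}_p \;=\; \mathrm{Sec}^{\eps X_0}_M(T_pS) + \det \II^{\eps X_0}_p,
\]
and I would treat the two summands separately.

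For the ambient sectional curvature, extend $(X_1, X_2)$ to a local $g$-orthonormal frame of $D$ around $p$ and apply Koszul's formula for $g^{\eps X_0}$. The only terms producing an $\eps^{-2}$ contribution to $\nabla^{g^{\eps X_0}}$ are those involving $\langle [X_i, X_j], X_0\rangle_{g^{\eps X_0}} = \eps^{-2} B^{X_0}(X_i, X_j)$; tracking them through the curvature tensor (and checking that the extra $O(1)$ terms coming from $[X_i, X_0]$ do not contaminate the leading order, as is apparent in the flat Heisenberg model) yields
\[
\mathrm{Sec}^{\eps X_0}_M(D_p) \;=\; -\tfrac{3}{4\eps^2}\det B^{X_0}_p + O(1).
\]
For the second fundamental form, choose tangent extensions $(T_1, T_2)$ on $S$ with $T_i(p)=X_i(p)$ and use $\II^{\eps X_0}_p(T_i, T_j) = \langle \nabla^{g^{\eps X_0}}_{T_i}T_j,\,\eps X_0\rangle_{g^{\eps X_0}}|_p$ together with Koszul to obtain $\II^{\eps X_0}_p = \eps^{-1} H_p + O(\eps)$, where $H_p$ is a symmetric bilinear form on $D_p$ measuring the second-order deviation of $S$ from $D$ at $p$ (in Heisenberg coordinates with $S = \{z = h(x,y)\}$ one has $H_p = \hess h(0)$). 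Hence $\det\II^{\eps X_0}_p = \eps^{-2}\det H_p + O(1)$, so
\[
K^{\eps X_0}_p \;=\; \frac{1}{\eps^2}\bigl(\det H_p - \tfrac{3}{4}\det B^{X_0}_p\bigr) + O(1),
\]
and $\widehat K_p = \det H_p/\det B^{X_0}_p - 3/4$ is finite.

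For the independence of $X_0$, write any other transverse field as $X_0' = \lambda X_0 + V$ with $V\in D$ and $\lambda$ nowhere vanishing. Rescaling $X_0 \mapsto \lambda X_0$ merely reparametrises the family $\{g^{\eps X_0}\}_\eps$ and hence preserves the limit, while the defining relation of $B$ immediately gives $B^{X_0 + V} = B^{X_0}$; it then remains to check that $H_p$ is invariant under the horizontal shift $X_0 \mapsto X_0 + V$, so that the ratio $\det H_p/\det B^{X_0}_p$ is unchanged. The main obstacle is precisely the bookkeeping in the Koszul and Gauss computations—isolating the $\eps^{-2}$ contributions from the many $O(1)$ corrections when $[X_i, X_0]$ has a nontrivial $X_0$-component (which never occurs in Heisenberg but does in the general contact setting)—together with the cancellation under the horizontal shift, which might be cleaner to obtain by recasting $\widehat K_p$ as an intrinsic invariant of the contact sub-Riemannian structure and of $S$ at $p$.
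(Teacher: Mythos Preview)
Your proposal is correct and follows essentially the same route as the paper: both split $K^{\eps X_0}_p$ via the Gauss equation into the ambient sectional curvature of $D_p$ and $\det\II^{\eps}_p$, and both extract the $\eps^{-2}$ leading terms from Koszul's formula using a local orthonormal frame $(X_1,X_2)$ of $D$ and the structure constants $c_{ij}^k$. The paper's Lemmas on $K_{\mathrm{ext}}^{\eps}(p)$ and $\det\II^{\eps}(p)$ are exactly your two asymptotic claims, carried out explicitly.

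The only substantive difference is in the independence-of-$X_0$ step. The paper does not argue invariance under $X_0\mapsto \lambda X_0+V$ directly; instead it pushes the computation one step further, using $c_{12}^0(p)\,X_0f(p)=[X_2,X_1]f(p)$ at a characteristic point to rewrite the limit as
\[
\widehat K_p \;=\; -1+\frac{\det\hess_H f(p)}{[X_2,X_1]f(p)^2},
\]
from which $X_0$ is manifestly absent. This is precisely the ``recasting as an intrinsic invariant'' that you anticipate in your final sentence, and it sidesteps entirely the horizontal-shift verification you flag as outstanding. Your decomposition argument does go through (at a characteristic point $Vf(p)=0$ for $V\in D$, so both $(X_0+V)f(p)$ and the $X_0$-component of $[X_2,X_1]$ are unchanged), but the paper's way is cleaner once the explicit formula is in hand.
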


	As we shall see, the coefficient $\widehat{K}_{p}$ determines the qualitative behaviour of the characteristic foliation near a characteristic point $p$.  
Given an open set $U$ in $S$, a  vector field  $X$ of class $C^{1}$ is a \textit{characteristic vector field} of $S$ in $U$ if, for all $x$ in $U$,
	\begin{equation} \label{eq:tangency-cvf}
	\spa_{\R} \, X(x) = \begin{cases} 
	\{0\},   & \mbox{if} \ x\in \Sigma(S),  \\
	\ T_{x}S\cap D_{x}, &  \mbox{otherwise},  \end{cases}
	\end{equation}
and satisfies the condition
	\begin{equation}\label{eq:non-deneric-cvf} 
	\dive X(p)\neq 0, \qquad \forall\, p\in \Sigma(S)\cap U.
	\end{equation}
 Notice that  $\dive X(p)$ is well-defined since $X(p)=0$, i.e., $p$ is a characteristic point, and it is independent on the volume form; in particular $\dive X(p)=\tr DX(p)$.
Due to Lemma~\ref{lem:existance-cvf}, one can show that locally there always exists a characteristic vector field, and that two characteristic vector fields are multiples by an everywhere non-zero function; in particular, if $X$ is a characteristic vector field, then also $-X$ it a characteristic vector field.
Finally, condition~(\ref{eq:tangency-cvf}) implies that  the characteristic foliation of $S$ in $U$ is the set of orbits of the dynamical system defined by $X$, and that the characteristic points are precisely the zeros of $X$, i.e., equilibrium points.

	Following the terminology of contact geometry (cf.\  for instance \cite[Par.~4.6]{geiges2008introduction}), given a characteristic point $p\in \Sigma(S)$ and a characteristic vector field $X$, the point $p$ is  \textit{elliptic} if $\det DX(p)>0$, and  \textit{hyperbolic} if $\det DX(p)<0$. 

\begin{proposition}\label{prop:Khat-and-eigenvalues}
Let $S$ be a $C^{2}$ surface embedded in a 3D contact sub-Riemannian manifold. 
Given a characteristic point $p$ in $\Sigma(S)$, let $X$ be a characteristic vector field $X$  near $p$. 
Then, $\tr DX(p)\neq 0$ and
	\begin{equation}\label{eq:Khat-and-DX(p)}
	\widehat K_{p} = -1+\frac{\det DX(p)}{~(\tr DX(p))^{2}}.
	\end{equation}
Thus, $p$ is hyperbolic if and only if $\widehat K_{p}<-1$, and $p$ it is elliptic  if and only if $\widehat K_{p}>-1$. 	
\end{proposition}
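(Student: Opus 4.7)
The plan is to verify both sides of~\eqref{eq:Khat-and-DX(p)} in explicit local coordinates near $p$, using Theorem~\ref{t:kconverge} to choose a convenient transverse vector field $X_0$. A preliminary observation is that the ratio $\det DX(p)/(\tr DX(p))^2$ is an invariant of the characteristic foliation: any other characteristic vector field differs from $X$ by a nowhere-zero multiplicative factor $f$, and since $X(p)=0$ one has $D(fX)(p) = f(p)\,DX(p)$, so the trace and determinant rescale homogeneously.

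To show that $\tr DX(p) \neq 0$, I would use that at a zero the trace equals the divergence and is independent of the chosen volume form. Picking an area form $\Omega_S$ on $S$ and a contact form $\omega$ with $\ker\omega = D$, the characteristic vector field can be normalised so that $i_X\Omega_S = \omega|_S$; Cartan's formula then gives $(\dive X)\,\Omega_S = d(\omega|_S) = (d\omega)|_S$, and at $p$ the right-hand side is nonzero because $d\omega|_{T_pS} = d\omega|_{D_p}$ is nondegenerate by the contact condition.

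For the main identity I would pick coordinates $(x,y,z)$ centred at $p$ with $T_pS = \{z=0\}$ and $(\partial_x|_p, \partial_y|_p)$ a $g$-orthonormal basis of $D_p$, and take $X_0 = \partial_z$. Then $\omega = dz + \alpha_1\,dx + \alpha_2\,dy$ with $\alpha_i(0)=0$, the surface is the graph $z = h(x,y)$ with $h(0)=0$ and $\nabla h(0)=0$, and a characteristic vector field is $X = \psi\,\partial_x - \phi\,\partial_y$ where $\phi = h_x + \alpha_1(\cdot,\cdot,h)$ and $\psi = h_y + \alpha_2(\cdot,\cdot,h)$. Direct differentiation yields
\[
\tr DX(p) = \alpha_{2,x}(0) - \alpha_{1,y}(0) = d\omega(\partial_x,\partial_y)|_p, \qquad \det B^{X_0}_p = (\tr DX(p))^2,
\]
together with an explicit expression for $\det DX(p)$ in terms of the Hessian of $h$ at $0$ and the first partials of the $\alpha_i$. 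Since $\det B^{\varepsilon X_0}_p = (\tr DX(p))^2/\varepsilon^2$, the claim reduces to showing $\varepsilon^2\,K^{\varepsilon X_0}_p \to -(\tr DX(p))^2 + \det DX(p)$ as $\varepsilon\to 0$.

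This last step is the main obstacle. I would apply the Gauss equation $K^{\varepsilon X_0}_p = \sec^{\varepsilon X_0}(T_pS) + \det h^{\varepsilon X_0}_p$ (using that $g^{\varepsilon X_0}|_{T_pS}$ has unit determinant in our basis at $p$). A standard Koszul-formula computation for $g^{\varepsilon X_0}$ gives $\varepsilon^2\sec^{\varepsilon X_0}(D_p) \to -\tfrac{3}{4}(\tr DX(p))^2$, while the second fundamental form, computed using the unit normal $\varepsilon X_0$ and the coordinate tangent vectors to the graph, contributes $\varepsilon^2\det h^{\varepsilon X_0}_p \to \det DX(p) - \tfrac{1}{4}(\tr DX(p))^2$. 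Tracking the two singular $1/\varepsilon^2$ contributions simultaneously is delicate: the Christoffel symbols of $g^{\varepsilon X_0}$ produce $1/\varepsilon^2$ terms through the coefficient $d\omega(\partial_x,\partial_y)|_p$, which must combine with the Hessian of $h$ to yield exactly $-(\tr DX(p))^2 + \det DX(p)$, completing the proof.
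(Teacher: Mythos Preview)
Your proposal is correct, but it takes a considerably longer route than the paper. Both your argument and the paper's begin with the same invariance observation (that $\det DX(p)/(\tr DX(p))^{2}$ is unchanged under $X\mapsto fX$ since $D(fX)(p)=f(p)DX(p)$), and your Cartan-formula argument for $\tr DX(p)\neq 0$ is essentially how the paper establishes condition~\eqref{eq:non-deneric-cvf} (see Remark~\ref{req:local-setting}). The difference lies in how the main identity is verified.

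You set up graph coordinates and then recompute the two singular contributions $\eps^{2}\sec^{\eps X_{0}}(D_{p})\to -\tfrac{3}{4}(\tr DX(p))^{2}$ and $\eps^{2}\det h^{\eps X_{0}}_{p}\to \det DX(p)-\tfrac{1}{4}(\tr DX(p))^{2}$ directly via Koszul. This is correct and matches Lemmas~\ref{lem:det-II} and~\ref{lem:ext-curvature}, but it amounts to reproving the curvature asymptotics already obtained in the proof of Theorem~\ref{t:kconverge}. The paper instead exploits Corollary~\ref{cor:formula-for-Khat}, the explicit formula $\widehat K_{p}=-1+\det\hess_{H}f(p)/[X_{2},X_{1}]f(p)^{2}$ extracted from that proof, and then computes $DX_{f}(p)$ for the concrete characteristic vector field $X_{f}=(X_{1}f)X_{2}-(X_{2}f)X_{1}$ in the frame $(X_{1},X_{2})$; a two-line calculation gives $\det DX_{f}(p)=\det\hess_{H}f(p)$ and $\tr DX_{f}(p)=[X_{2},X_{1}]f(p)$, and the identity follows immediately. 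Your approach is more self-contained, while the paper's is much shorter by reusing work already done; in particular, once Corollary~\ref{cor:formula-for-Khat} is available there is no need to touch the Riemannian approximation again.
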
 

This equality links $\widehat K_{p}$ to the eigenvalues of $DX(p)$, which determine the qualitative behaviour of the characteristic foliation around the characteristic point $p$.
This relation is made explicit  in Corollary~\ref{cor:qualitative-char-foli-non-deg} for a non-degenerate characteristic point, and in Corollary~\ref{cor:qualitative-char-foli-deg} for a degenerate characteristic point. 
Moreover, equation \eqref{eq:Khat-and-DX(p)} shows that $\widehat K_{p}$ is independent on the sub-Riemannian metric, and depends only on the line field defined by $D$ on $S$.  
 \\

	Still about local properties, we prove that the one-dimensional leaves of the characteristic foliation of $S$ which converge to a characteristic point have finite length.
Precisely, let $\ell$ be a leaf of the characteristic foliation of $S$; we say that a point $p$ in $S$ is a \emph{limit point} of $\ell$ if there exists a point $x$ in $\ell$ and a characteristic vector field~$X$ of $S$ such that 
\begin{equation}\label{eq:convergence-to-point}
e^{tX}(x)\to p \qquad \mbox{for}~t\to +\infty,
\end{equation}
 where $e^{tX}$ is the flow of $X$.
In such case, we denote the semi-leaf $\ell^{+}_{X}(x)=\{e^{tX}(x) ~|~ t\geq0\}$.
 With the above definition, a leaf can have at most two limit points: one for each extremity.
Finally,  notice that a limit point of a leaf must be a zero of the corresponding characteristic vector field $X$, i.e., a characteristic point of $S$. 

\begin{proposition}
\label{prop:leavesFiniteLength} 
Let $S$ be a $C^{2}$ surface embedded in a 3D contact sub-Riemannian manifold, and let $p$ be a limit point of a one-dimensional leaf $\ell$.
Let $x\in \ell$, and $X$ be a characteristic vector field such that  $e^{tX}(x)\to p$ for $t\to +\infty$. Then, the length of $\ell^{+}_{X}(x)$ is finite.
\end{proposition}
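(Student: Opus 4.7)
The plan is to express the sub-Riemannian length of the semi-leaf as a Riemannian arc length on $S$, and then to use the non-degeneracy $\tr DX(p)\neq 0$, built into the definition of a characteristic vector field through \eqref{eq:non-deneric-cvf}, to control the decay of the orbit $\gamma(t)=e^{tX}(x)$ toward $p$. The length of $\ell^{+}_{X}(x)$ equals $\int_{0}^{+\infty}|X(\gamma(t))|_{g}\,dt$; since $X$ takes values in $D$, one has $|X|_{g}=|X|_{g^{X_{0}}}$ for any Riemannian extension $g^{X_{0}}$, so this integral is the Riemannian arc length of $\gamma$ on $(S,g^{X_{0}}|_{S})$. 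The statement thus reduces to a classical question of planar dynamics: an orbit of a $C^{1}$ vector field on a surface that converges to an equilibrium with non-zero linearization trace must have finite Riemannian arc length.

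I would fix local coordinates on $S$ with $p=0$, let $A=DX(0)$ and $\tau=\tr A\neq 0$, and split into cases based on $\det A$. When $\det A\neq 0$ the equilibrium is hyperbolic, so the stable manifold theorem (or a direct Gronwall argument using the spectral gap) shows that any orbit $\gamma$ converging to $0$ must lie in the local stable manifold and satisfies $|\gamma(t)|\le Ce^{-\alpha t}$ for some constants $\alpha, C>0$. Combined with the Lipschitz bound $|X(q)|_{g}\le K|q|$ coming from $X(0)=0$ and the $C^{1}$ regularity of $X$, this already yields $\int_{0}^{+\infty}|X(\gamma(t))|_{g}\,dt<+\infty$.

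The delicate case is $\det A=0$, when $A$ has eigenvalues $0$ and $\tau$. I would invoke the center manifold theorem to obtain a local splitting into a $1$-dimensional strong (un)stable manifold $W^{s/u}$ tangent to the $\tau$-eigenspace and a $1$-dimensional center manifold $W^{c}$ tangent to $\ker A$, and choose $C^{1}$ coordinates $(u,v)$ with $W^{s/u}=\{v=0\}$ and $W^{c}=\{u=0\}$; then $\dot u=\tau u+o(|(u,v)|)$ and $\dot v$ vanishes to second order at the origin. The transverse coordinate $u(t)$ decays exponentially (either because $\gamma\subset W^{s}$, or because $\gamma$ is attracted to $W^{c}$), so it contributes finite $\int|\dot u|\,dt$. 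Because $W^{c}$ is $1$-dimensional, on its side of approach the tangential coordinate $v(t)$ is eventually monotone in $t$, and hence $\int|\dot v|\,dt$ is bounded by $|v(t_{0})|$. The arc length of $\gamma$ is then controlled by $\int(|\dot u|+|\dot v|)\,dt<+\infty$. This degenerate case is the main obstacle of the proof: it forces one to invoke the center manifold theorem and to exploit crucially the $1$-dimensionality of $W^{c}$ (available only because $S$ is a surface) in order to upgrade mere convergence $\gamma(t)\to p$ into eventual monotonicity, and thereby into a finite total-variation bound.
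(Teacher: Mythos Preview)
Your strategy coincides with the paper's: reduce $L_{sR}(\ell^{+}_{X}(x))=\int_{0}^{\infty}|X(\gamma(t))|_{g}\,dt$ to a Riemannian arc length on $S$ via $|X|_{g}=|X|_{g^{X_{0}}}$, then split into the non-degenerate case (stable manifold theorem, exponential convergence, Lipschitz bound on $X$) and the degenerate case (center manifold theorem). The non-degenerate case is handled identically.

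In the degenerate case there is a gap. Your claim that the tangential coordinate $v(t)$ of $\gamma$ is \emph{eventually monotone} is not justified: monotonicity holds for trajectories \emph{on} $W^{c}$ (they solve a scalar ODE), but $\gamma$ merely approaches $W^{c}$, and the perturbation coming from the exponentially small $u(t)$ can in principle spoil the sign of $\dot v=G(u,v)$, since $G(0,v)=\phi(v)$ itself vanishes to second order at $0$. Similarly, ``$u(t)$ decays exponentially, so $\int|\dot u|\,dt<\infty$'' skips a step: one must bound $\dot u$, not $u$, and after your $C^{1}$ straightening of $W^{c}$ the field is only $C^{0}$, so factoring $\dot u=u\cdot(\cdots)$ is delicate.

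The paper closes this gap without straightening anything, by invoking the \emph{asymptotic phase} property of the center manifold (Proposition~\ref{approxCM}): there is a trajectory $\zeta(t)\subset W^{c}$ with $|\gamma(t)-\zeta(t)|\le Ce^{-\alpha t}$. Then, working in a fixed chart where $X$ is $C^{1}$ (hence Lipschitz),
\[
|X(\gamma(t))|\le |X(\gamma(t))-X(\zeta(t))|+|X(\zeta(t))|\le \big(\sup\|DX\|\big)\,Ce^{-\alpha t}+|X(\zeta(t))|,
\]
and $\int|X(\zeta)|\,dt$ is finite because $\zeta$ parametrises a bounded arc of the $C^{1}$ curve $W^{c}$. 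This is exactly your ``$W^{c}$ is $1$-dimensional, hence finite total variation'' idea, but applied to the shadowing trajectory $\zeta$ rather than to $\gamma$ itself. So your outline is correct; the fix is to compare $\gamma$ to its asymptotic phase on $W^{c}$ via the triangle inequality on $|X|$, rather than asserting monotonicity of $\gamma$'s own $v$-coordinate.
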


This result is not surprising, and it is  a consequence of the sub-Riemannian structure being contact. Indeed, for a non-contact distribution this conclusion is false; for instance, in  \cite[Lem.~2.1]{ZZ95} the authors prove that the length of the semi-leaves of the characteristic foliation of a Martinet surface converging to an elliptic point is infinite.
\\

On the global side, we determine some conditions for the induced metric~$d_{S}$ to be finite  under the assumption that there exists a global characteristic vector field of $S$.
In such case, for a compact, connected surface $S$ with isolated characteristic points, we show that $d_{S}$ is finite in the absence of the following  classes of leaves  in the characteristic foliation of $S$: nontrivial recurrent trajectories, periodic trajectories, and sided contours; see Proposition~\ref{prop:dS-finite}. 
Note that if $S$ is orientable and the distribution $D$ is \textit{coorientable}, i.e., there exists a global contact form $\contact$ defining the distribution (cf.\ also \eqref{eq:contact-form}), then $S$ admits a global characteristic vector field; see Lemma~\ref{lem:existance-cvf}. 
Recall that a distribution is \textit{tight} if it does not admit an \textit{overtwisted disk}, i.e., an embedding of a disk with horizontal boundary such that the distribution does not twists along the boundary.

\begin{theorem} \label{thm:FiniteMetricSph}
Let $(M, \distr,g)$ be a tight coorientable sub-Riemannian contact structure, and let $S$ be a $C^{2}$ embedded surface with isolated characteristic points, homeomorphic to a sphere. Then the induced distance $d_{S}$ is finite.
\end{theorem}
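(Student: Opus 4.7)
The plan is to verify the three hypotheses of Proposition~\ref{prop:dS-finite}: the characteristic foliation of $S$ has no nontrivial recurrent trajectories, no periodic trajectories, and no sided contours. Since $S$ is homeomorphic to a sphere it is orientable and compact, and together with the coorientability of $D$ this lets Lemma~\ref{lem:existance-cvf} produce a globally defined $C^1$ characteristic vector field $X$ on $S$, whose zero set coincides with the finite set $\Sigma(S)$. The task then reduces to excluding the three kinds of bad trajectories for the flow of $X$.

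The absence of nontrivial recurrent trajectories is immediate from the Poincar\'e--Bendixson theorem applied to the $C^1$ flow of $X$ on the two-sphere with finitely many equilibria: the only possible $\omega$-limit and $\alpha$-limit sets of non-equilibrium orbits are single equilibria, periodic orbits, or polycycles formed by equilibria joined by heteroclinic connections, so no nontrivial recurrence can occur.

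To exclude periodic orbits and sided contours, I would argue by contradiction. Proposition~\ref{prop:Khat-and-eigenvalues} gives $\tr DX(p)\neq 0$ at every $p\in\Sigma(S)$, so no equilibrium of $X$ is a center. Hence any closed leaf of the characteristic foliation is either a limit cycle, or belongs to a maximal open annulus of closed orbits whose boundary on each side must itself be a limit cycle or a polycycle. In either situation one finds, inside $S\cong S^2$, an embedded disk whose characteristic foliation has a limit cycle on its boundary, or a sided contour bounding a disk. A standard perturbation from convex-surface theory (Giroux's lemma) then converts such a limit cycle or sided contour into a genuine overtwisted disk by a $C^\infty$-small isotopy localised near the bad leaf and carried out in the direction transverse to $D$, contradicting the tightness of $(M,D,g)$.

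The main obstacle is this last contact-topological input, namely the explicit construction of an overtwisted disk from an embedded disk whose characteristic foliation contains a limit cycle or a sided contour; the rest of the argument is essentially a dynamical-systems exercise on $S^2$ together with Proposition~\ref{prop:leavesFiniteLength}. Once the perturbation lemma is in place, the three verifications above combine with Proposition~\ref{prop:dS-finite} to yield $d_S(x,y)<+\infty$ for all $x,y\in S$, completing the proof.
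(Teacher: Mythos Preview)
Your outline is essentially the paper's own strategy: produce a global characteristic vector field via Lemma~\ref{lem:existance-cvf}, rule out nontrivial recurrence on $S^2$ by a Poincar\'e--Bendixson/Jordan-curve argument, exclude periodic orbits and sided contours using tightness, and then invoke Proposition~\ref{prop:dS-finite}. So at the level of architecture you are aligned with the paper.

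There are two differences worth noting. First, your treatment of periodic trajectories takes an unnecessary detour through centers, limit cycles and annuli of closed orbits. The paper's argument (Remark~\ref{rmq:no-periodic-in-tight} and Lemma~\ref{lem:no-periodic-trajectories}) is direct and needs no perturbation: \emph{any} periodic leaf $\ell$ on the sphere bounds a disk $\Delta$, and since $\ell$ contains no characteristic points the distribution never coincides with $T\Delta$ along $\partial\Delta$, so $\tb(\partial\Delta)=0$ and $\Delta$ is overtwisted outright. Your discussion of $\tr DX(p)\neq 0$ and limit cycles is not needed here. Second, for sided contours the paper does not invoke Giroux's lemma as a black box; instead it gives an explicit local construction (Lemma~\ref{lem:no-sided-contours}): at each hyperbolic vertex one rectifies the foliation via a contactomorphism onto a vertical plane in Heisenberg and inserts a small bump that reroutes the two incoming separatrices onto a common nearby leaf, thereby removing the vertex. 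Iterating reduces the sided contour to a periodic trajectory, which is already excluded. This is exactly the ``contact-topological input'' you flagged as the main obstacle, and it is the genuine content of the proof; your proposal is correct in spirit but leaves precisely this step unproved.
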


We stress that having isolated characteristic points is a generic property for a surface in a contact manifold. Example~\ref{exmp:horizontal-torus} and Example~\ref{exmp:vertical-torus} in the Heisenberg distribution show that, if $S$ is not a topological sphere, then $S$  presents possibly nontrivial recurrent trajectories or periodic trajectories, cases in which $d_{S}$ is not finite .
Moreover, if one removes the hypothesis of the contact structure being tight, then a sphere $S$ might present a periodic trajectory, hence the induced distance $d_{S}$ would not be finite. 
The compactness hypothesis is also important,  as one can see in Example~\ref{exmp:planes}.


\subsection*{Previous literature} 
	Characteristic foliations of surfaces in 3D contact manifolds are studied in numerous references;  here we use notions contained in \cite{Giroux1991,Giroux:2000aa, zbMATH03915280} and we refer to \cite{Etnyre_2003,geiges2008introduction} for an introduction to the subject.  Moreover, for an introduction to sub-Riemannian geometry we refer to \cite{montgomery2002tour,rifford:hal-01131787,jean:hal-01137580,ABB19}.

	The use of the Riemannian approximation scheme to define sub-Riemannian geometric invariants is a well-known technique. 
For example, it had already been used  in \cite{Pauls:2004aa} to study the horizontal mean curvature in relation to the minimal surfaces in the Heisenberg group, whose integrability is discussed in \cite{Danielli:2012aa}.
For a general description of the properties of the Riemannian approximations in Heisenberg we also refer to \cite{Capogna:2007}.

	In this paper, we combine the Riemannian approximation scheme suitably normalised by the Lie bracket structure on the distribution to define the metric coefficient $\widehat K$ \emph{at the characteristic points}. 
Notice that usually in the literature the Riemannian approximation is employed  to define sub-Riemannian geometric invariants \emph{outside} of the characteristic set.
For instance, in~\cite{Balogh:2017aa} the authors  defined the sub-Riemannian Gaussian curvature at a point $x\in S\mysetminus \Sigma(S)$  as $\mathcal{K}_{S}(x)=\lim_{\eps\to0}K_{x}^{\eps X_{0}}$, and they proved that a Gauss-Bonnet type theorem holds; here the authors worked  in the setting of the Heisenberg group, and with $X_{0}$ equals to the Reeb vector field of the Heisenberg group.  
This construction is extended in \cite{Wang_2020} to the affine group and to the group of rigid motions of the Minkowski plane, and in \cite{Veloso:2020} to a general sub-Riemannian manifold. 
In the latter, the author linked  $\mathcal{K}_{S}$ with the curvature introduced in \cite{Diniz:2016aa}, and, when $\Sigma(S)=\emptyset$, they proved a Gauss-Bonnet theorem by  Stokes formula.
A Gauss-Bonnet theorem (in a different setting) was also proven in \cite{Agrachev_2008}.
We finally notice that the invariant $\mathcal{K}_{S}$ also appears in  \cite{Lee_2013}, where it is called curvature of transversality. 
An expression for $\mathcal{K}_{S}$ is provided also in \cite{articKaren20}, in relation to a new notion of stochastic processes in this setting.


\subsection*{Structure of the paper}
After some preliminaries contained in Section~\ref{sect:prelim},
in Section~\ref{sec:riemann-aprox} we  prove Theorem~\ref{t:kconverge}, by introducing the metric invariant $\widehat K$ defined at characteristic points. In Section~\ref{sec:local-study}, we write the metric invariant in terms of a characteristic vector field as in Proposition~\ref{prop:Khat-and-eigenvalues}, and we  study the length of the horizontal curves as in Proposition~\ref{prop:leavesFiniteLength}.
In Section~\ref{sect:topolog-skeleton}, we use the topological decomposition of a 2D flow  to prove Proposition~\ref{prop:dS-finite}, from which we deduce Theorem~\ref{thm:FiniteMetricSph} in Section~\ref{sect:sphere}.
Section~\ref{sect:examples} is devoted  some examples of induced distances  on surfaces in the Heisenberg group.


\subsection*{Acknowledgements}
We would like to thank Daniel Bennequin and Nicola Garofalo for  stimulating discussions.
This work was supported by the Grant ANR-15-CE40-0018 SRGI of the French ANR.
The third author is supported by the DIM Math Innov grant from R\'egion Île-de-France.


\section{Preliminaries}\label{sect:prelim}

	In this paper, $M$ is a smooth 3-dimensional manifold, $(D,g)$  a smooth contact sub-Riemannian structure on $M$, and $S$ an embedded surface of class $C^{2}$. 
The contact distribution is, locally, the kernel of a contact form  $\contact   \in \Omega^{1}(M)$, which can be normalised to satisfy 
	\begin{equation}\label{eq:contact-form}
	D=\ker \contact, \qquad \contact \wedge d\contact\neq 0, \qquad d\contact|_{D}=\vol_{g}.
	\end{equation} 

Recall that a  point $p$ in $S$ is a characteristic point of $S$ if $T_{p}S=D_{p}$, and that the characteristic points  of $S$ form the \emph{characteristic set} $\Sigma(S)$. 
For $x\in S\mysetminus \Sigma(S)$, the intersection
	\begin{equation} \label{eq:gendi}
	 l_{x} = D_{x} \cap T_{x}S
	 \end{equation}
is one-dimensional, and we can think of \eqref{eq:gendi} as defining a generalised distribution $l$ in $S$ whose rank increases at characteristic points. Sometimes in the literature the (generalised) distribution $l$ is called the \emph{trace} of $D$ on $S$.
The distribution $l$ is not smooth at the characteristic points, hence it is more convenient to work with a characteristic vector field, that is a $C^{1}$ vector field of $S$ satisfying~\eqref{eq:tangency-cvf} and~\eqref{eq:non-deneric-cvf}.

\begin{lemma} \label{lem:existance-cvf} 
Assume that $S$ is orientable and that $D$ is coorientable. 
Then, $S$ admits a global characteristic vector field; moreover, the characteristic vector fields of $S$ are the vector fields $X$ for which there exists a volume form  $\Omega$ of~$S$ such that 
	\begin{equation} \label{eq:cvf-as-dual}
	\Omega(X,Y) =\contact(Y) \qquad \mbox{for all } \ Y \in T S.
	\end{equation}
\end{lemma}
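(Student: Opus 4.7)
The plan is to construct $X$ directly from the pair $(\Omega,\contact)$ and then check that it satisfies both conditions \eqref{eq:tangency-cvf} and \eqref{eq:non-deneric-cvf} defining a characteristic vector field. Since $S$ is an oriented $2$-manifold, the volume form $\Omega$ is non-degenerate, so the map $X\mapsto\iota_{X}\Omega=\Omega(X,\cdot)$ is an isomorphism of $C^{\infty}(S)$-modules from vector fields on $S$ to $1$-forms on $S$. Hence there is a unique vector field $X$ on $S$ with $\iota_{X}\Omega=\iota^{*}\contact$, where $\iota:S\hookrightarrow M$ is the inclusion; its regularity matches that of $\iota^{*}\contact$, which is $C^{1}$ since $S$ is $C^{2}$. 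Existence of such a global $X$ therefore follows at once from the existence of a global $\Omega$ (orientability of $S$) and of a global $\contact$ (coorientability of $D$).

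Next I would verify the two defining properties. For \eqref{eq:tangency-cvf}: $X(x)=0$ iff $\iota^{*}\contact_{x}=0$ iff $T_{x}S\subseteq\ker\contact_{x}=D_{x}$, and by dimension count the latter means $T_{x}S=D_{x}$, i.e.\ $x\in\Sigma(S)$. For $x\notin\Sigma(S)$, plugging $Y=X(x)$ into \eqref{eq:cvf-as-dual} gives $0=\Omega(X,X)=\contact(X)$, so $X(x)\in T_{x}S\cap D_{x}=l_{x}$, and since $X(x)\neq0$ we get $\spa_{\R}X(x)=l_{x}$. For \eqref{eq:non-deneric-cvf}, apply Cartan's formula: since $\dim S=2$ we have $d\Omega=0$, hence $\mathcal{L}_{X}\Omega=d(\iota_{X}\Omega)=d(\iota^{*}\contact)=\iota^{*}(d\contact)$. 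Writing $\mathcal{L}_{X}\Omega=(\dive X)\,\Omega$ and evaluating at a characteristic point $p$, where $T_{p}S=D_{p}$, gives
\[
(\dive X)(p)\,\Omega_{p}=d\contact|_{T_{p}S}=d\contact|_{D_{p}}=\vol_{g},
\]
which is non-zero by the normalisation \eqref{eq:contact-form}; hence $\dive X(p)\neq0$.

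For the converse characterisation, I would show that any two characteristic vector fields $X,X'$ are related by $X'=hX$ for a smooth nowhere-vanishing function $h$ on $S$, so that replacing $\Omega$ by $\Omega/h$ realises $X'$ through \eqref{eq:cvf-as-dual}. Outside $\Sigma(S)$ this is immediate because both vector fields span the same line field $l$. At a characteristic point $p$, however, both vanish, and one must argue that the ratio extends to a non-vanishing $C^{1}$ function across $\Sigma(S)$; this is where condition \eqref{eq:non-deneric-cvf} does the work, since the simultaneous non-vanishing of $\dive X(p)$ and $\dive X'(p)$ forces the two linearisations $DX(p),DX'(p)$ to be proportional with a non-zero proportionality constant, matching the value of $h$ at $p$. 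This smooth extension of the ratio across the characteristic set is, to my mind, the only delicate point in the proof; the rest is a direct verification using the non-degeneracy of $\Omega$ and the normalisation $d\contact|_{D}=\vol_{g}$.
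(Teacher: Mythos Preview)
Your argument is correct and follows the same overall route as the paper, but is considerably more self-contained. For the forward direction---that a vector field defined by \eqref{eq:cvf-as-dual} satisfies \eqref{eq:tangency-cvf} and \eqref{eq:non-deneric-cvf}---the paper simply defers to \cite[Par.~4.6]{geiges2008introduction}, whereas you give direct verifications; in particular, your use of Cartan's formula $\mathcal{L}_X\Omega=d(\iota_X\Omega)=\iota^*(d\contact)$ together with the normalisation $d\contact|_D=\vol_g$ is a clean way to see \eqref{eq:non-deneric-cvf} and makes the role of the contact condition explicit. For the converse, you and the paper argue identically: any vector field satisfying \eqref{eq:tangency-cvf} is a multiple $\phi X$ of one coming from \eqref{eq:cvf-as-dual}, with $\phi$ nonzero off $\Sigma(S)$, and condition \eqref{eq:non-deneric-cvf} then forces $\phi\neq 0$ on $\Sigma(S)$ as well. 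You are right to single out the extension of the ratio across $\Sigma(S)$ as the delicate point; the paper also states this step without further justification.
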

 Indeed, formula \eqref{eq:cvf-as-dual} is the definition of characteristic vector field as given in  \cite[Par.~4.6]{geiges2008introduction}, meaning that the characteristic vector fields are dual to the contact form $\contact|_{S}$ with respect to the volume forms of $S$. In the previous reference it is shown that if a vector field satisfies~\eqref{eq:cvf-as-dual}, then it satisfies~\eqref{eq:tangency-cvf} and~\eqref{eq:non-deneric-cvf}.
Reciprocally, a vector field $\bar X$ satisfying~\eqref{eq:tangency-cvf} is a multiple of any vector field $X$ satisfying \eqref{eq:cvf-as-dual} for some function $\phi$ with $\phi|_{S\mysetminus \Sigma(S)}\neq 0$; additionally, if~\eqref{eq:non-deneric-cvf} holds, then $\phi|_{\Sigma(S)}\neq 0$; thus, $\bar X$ satisfies \eqref{eq:cvf-as-dual} with $\frac{1}{\phi}\Omega$ as volume form of $S$.

\begin{remark}\label{rmq:proportionality-cvf}
Since the volume forms of $S$ are proportional by nowhere-zero functions, the same holds for the characteristic vector fields.
\end{remark}

	Therefore, if the orientability hypotheses hold,  an equivalent definition of the characteristic foliation is the partition of $S$ into the orbits of a global characteristic vector field. 
This is a generalised foliation, as the dimension of the leaves is not constant since the characteristic set is partitioned in singletons.
\\

Let us provide another way to find, locally, an explicit expression for a local characteristic vector field.
Any point in $S$ admits a neighbourhood $U$ in $M$ in which there exists  an oriented orthonormal frame $(X_{1},X_{2})$  for $D|_{U}$, and a submersion $f$ of class $C^{2}$ for which $S$ is a level set, i.e.,  $S\cap U=f^{-1}(0)$ and $df|_{U}\neq 0$. 
In such case, a vector $V\in TM|_{U}$ is in $TS$ if and only if $Vf=0$; thus, for  a point $p\in U\cap S$,
\begin{equation}\label{eq:f1-f2-zero-in-Sigma}
	p\in \Sigma(S) \iff X_{1}f(p)=X_{2}f(p)=0.
\end{equation}
Moreover, since $[X_{2},X_{1}]_{p}\not\in D_{p}=T_{p}S$ at a characteristic point $p$, then $[X_{2},X_{1}]f(p)\neq 0$.

\begin{remark}\label{req:local-setting}
In the previous notation, the vector field $X_{f}$ defined by
	\begin{equation}\label{eq:c-vf}
	X_{f}= (X_{1}f)X_{2}-(X_{2}f)X_{1},
	\end{equation}
is a characteristic vector field of $S$. Indeed, it follows from the definition that, for all $x$ in $S$, the vector $X_{f}(x)$ is in $T_{x}S\cap D_{x}$, and that, due to \eqref{eq:f1-f2-zero-in-Sigma},  $X_{f}(p)=0$ if and only if $p\in \Sigma(S)$; thus, $X_{f}$ satisfies~\eqref{eq:tangency-cvf}.
Moreover, for all $p\in\Sigma(S)$,
\[
	\dive X_{f}(p)=X_{2}X_{1}f(p)-X_{1}X_{2}f(p)=[X_{2},X_{1}]f(p),
\] 
which is nonzero due to the contact condition; thus, $X_{f}$ satisfies~\eqref{eq:non-deneric-cvf}.
\end{remark}

\begin{lemma}\label{Rectifi-Sigma-S}
The characteristic set $\Sigma(S)$ of a  surface $S$ of class $C^{2}$ is contained in a 1-dimensional submanifold of S of class  $C^{1}$.
\end{lemma}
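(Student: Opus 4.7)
The plan is to work locally near a point $p\in\Sigma(S)$ using the explicit description provided in Remark~\ref{req:local-setting}. Choose a neighborhood $U$ of $p$ in $M$ together with an orthonormal frame $(X_{1},X_{2})$ of $D|_{U}$ and a $C^{2}$ submersion $f$ with $S\cap U=f^{-1}(0)$. Set the $C^{1}$ functions $\phi_{i}=X_{i}f$ on $U$, so that by \eqref{eq:f1-f2-zero-in-Sigma} one has $\Sigma(S)\cap U=\{\phi_{1}|_{S}=0\}\cap\{\phi_{2}|_{S}=0\}$. The strategy is to show that at $p$ at least one of the two restrictions $\phi_{i}|_{S}$ has nonzero differential, and then to apply the implicit function theorem.

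The key step is the following dichotomy. Suppose, for the sake of contradiction, that $d(\phi_{1}|_{S})(p)=d(\phi_{2}|_{S})(p)=0$. Since $p$ is characteristic, $T_{p}S=D_{p}$, so both $X_{1}(p)$ and $X_{2}(p)$ lie in $T_{p}S$. Evaluating the assumed vanishing on these tangent vectors yields $X_{j}X_{i}f(p)=0$ for all $i,j\in\{1,2\}$, hence
\[
[X_{2},X_{1}]f(p)=X_{2}X_{1}f(p)-X_{1}X_{2}f(p)=0.
\]
This contradicts the contact condition, which (as already observed in Remark~\ref{req:local-setting}) forces $[X_{2},X_{1}](p)\notin D_{p}=T_{p}S$, and therefore $[X_{2},X_{1}]f(p)\neq 0$ because $df(p)$ vanishes exactly on $T_{p}S$.

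Consequently, after possibly relabeling and shrinking $U$, we may assume that $d(\phi_{1}|_{S})$ is nonvanishing on $S\cap U$. By the implicit function theorem applied to the $C^{1}$ function $\phi_{1}|_{S}$ on the $C^{2}$ surface $S$, the level set $\{\phi_{1}|_{S}=0\}\cap U$ is a $1$-dimensional $C^{1}$ submanifold of $S$ in a neighborhood of $p$, and by construction it contains $\Sigma(S)\cap U$. Since $p\in\Sigma(S)$ was arbitrary, this proves the lemma.

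The main (mild) obstacle is simply a bookkeeping one: the regularity drops from $C^{2}$ on $f$ to $C^{1}$ on the $\phi_{i}$, which is precisely what is needed for the implicit function theorem to deliver a $C^{1}$ (rather than $C^{2}$) submanifold, matching the statement. Everything else reduces to the contact-type computation of $[X_{2},X_{1}]f$ carried out above.
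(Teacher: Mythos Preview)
Your proof is correct and follows essentially the same approach as the paper: both arguments localize, use the description $\Sigma(S)\cap U=\{X_{1}f=0\}\cap\{X_{2}f=0\}$ from \eqref{eq:f1-f2-zero-in-Sigma}, and exploit the contact condition $[X_{2},X_{1}]f(p)\neq 0$ to conclude that one of the $X_{i}f$ has nonzero differential along $S$ at $p$, then invoke the implicit function theorem. The only cosmetic difference is that the paper argues directly (from $X_{2}X_{1}f(p)-X_{1}X_{2}f(p)\neq 0$ one of the two terms is nonzero), while you phrase the same step as a contradiction.
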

\begin{proof}
It suffices to show that for every point  $p$ in $\Sigma(S)$ there exists a neighbourhood $V$ of $p$ such that $V\cap \Sigma(S)$ is contained in an embedded $C^{1}$ curve. 
Let us fix a point  $p$ in $\Sigma(S)$, and a neighbourhood $U$ of $p$ in $M$ equipped with a frame $(X_{1},X_{2})$ and a function $f$ with the properties described above.
Because of \eqref{eq:f1-f2-zero-in-Sigma}, the characteristic points in $V=U\cap S$ are the solutions of the system $X_{1}f=X_{2}f=0$.

Due to the implicit function theorem, it suffices to show that  $d_{p}(X_{1}f)\neq 0$ or $d_{p}(X_{2}f)\neq0$.
Thanks to the contact condition, we have that $[X_{2},X_{1}]f(p)\neq 0$. As a consequence, since $X_{2}X_{1}f(p)=[X_{2},X_{1}]f(p)+X_{1}X_{2}f(p)$, at least one of the following is true: $X_{2}X_{1}f(p)\neq0$, or $X_{1}X_{2}f(p)\neq0$.
Assume that the first is true; then $d_{p}(X_{1}f)(X_{2})=X_{2}X_{1}f(p)\neq 0$. The other case being similar,  the lemma is proved. 
\end{proof}

For a more general discussion on the size of the characteristic set, we refer to \cite{Bal03} and references therein.



 \section{Riemannian approximations and Gaussian curvature} \label{sec:riemann-aprox}
 
	In this section we discuss the Riemannian approximations of a sub-Riemannian structure, and we prove Theorem~\ref{t:kconverge} by using the asymptotic expansion of the Gaussian curvature $K^{\eps X_{0}}_{p}$ at a characteristic point $p$.
	\\

In order to define the metric coefficient $\widehat K_{p}$, one needs to  fix a vector field $X_{0}$  transverse to the distribution in a neighbourhood of $p$. If the distribution is coorientable, it is possible to make this choice globally.
 As described in the introduction, once this choice has been made, one can extend the sub-Riemannian metric $g$ to a family of Riemannian metrics $g^{\eps X_{0}}$ such that, for every $\eps>0$, one has $\langle D,X_{0}\rangle_{g^{\eps X_{0}}}=0$ and $|X_{0}|_{g^{\eps X_{0}}}= 1/\eps$. 
To simplify the notation, we drop the dependance from $X_{0}$ in the superscript, writing $g^{\eps}= g^{\eps X_{0}}$. 

Let $\overline{\nabla}^{\eps}$ be the Levi-Civita connection of $(M,g^{\eps})$.  
Since we study local properties, we can restrict to a domain equipped with an orthonormal oriented frame $(X_{1},X_{2})$ of $D$; thus, $(\eps X_{0},X_{1}, X_{2})$ is an  orthonormal basis of $g^{\eps}$. Due to the Koszul formula, one has
\[ 
	\big\langle\overline{\nabla}^{\eps}_{X_{i}}X_{j},X_{k} \big\rangle_{g^{\eps}} = \frac{1}{2}\Big( - \langle X_{i},[X_{j},X_{k}]\rangle_{g^{\eps}}+\langle X_{k},[X_{i},X_{j}] \rangle_{g^{\eps}}+\langle X_{j},[X_{k},X_{i}]\rangle_{g^{\eps}}\Big),
\]
for all $i,j,k=0,1,2$. This identity enables us to describe $\overline{\nabla}^{\eps}$ using the frame $( X_{0},X_{1}, X_{2})$, which is independent from $\eps$.
This is done using the Lie bracket structure of the frame, i.e., the $C^{\infty}$ functions $c^{k}_{ij}$  such that
\begin{equation} \label{StructtureCoeff}
	[X_{j}, X_{i}] = c^{1}_{ij}X_{1} + c^{2}_{ij}X_{2} + c^{0}_{ij}X_{0} \qquad \mbox{for } i,j=0,1,2.
\end{equation}
The functions $c^{k}_{ij}$ are  the \textit{structure constants} of the frame.

Thus, for every $\eps>0$,  we have that 
\begin{align}\label{eqs:nablaEps}
	\overline{\nabla}^{\eps}_{X_{i}}X_{i} 
		&= c_{i0}^{i}\eps^{2}~X_{0}+\frac{c_{i1}^{i}}{\eps^{2}}~X_{1}+\frac{c_{i2}^{i}}{\eps^{2}}~~X_{2}   &&i=0,1,2 \vphantom{\frac{1}{2}} \\
	\overline{\nabla}^{\eps}_{X_{j}}X_{i} 
		&=  \frac{1}{2}\left( -c_{0i}^{j}\eps^{2}-c_{0j}^{i}\eps^{2}+c_{ij}^{0}\right)X_{0} + c_{ij}^{j}X_{j} && i\neq j= 1,2\nonumber \\
	\overline{\nabla}^{\eps}_{X_{0}}X_{1} 
		&= -c_{01}^{0}X_{0} + \frac{1}{2}\Big( c_{02}^{1}-c_{01}^{2}+\frac{c_{12}^{0}}{\eps^{2}}\Big)X_{2}\nonumber\\
	\overline{\nabla}^{\eps}_{X_{0}}X_{2} 
		&= -c_{02}^{0}X_{0}+ \frac{1}{2}\Big(c_{01}^{2}-c_{02}^{1}-\frac{c_{12}^{0}}{\eps^{2}}\Big) X_{1},\nonumber
\end{align}
and the remaining derivatives $\overline{\nabla}^{\eps}_{X_{1}}X_{0} $ and $\overline{\nabla}^{\eps}_{X_{2}}X_{0} $ are computed using  that the connection is torsion-free. \\ 

Given the surface $S$, the second fundamental form $\II^{\eps}$ of $S$ is the projection of the Levi-Civita connection on the orthogonal to the tangent space of the  surface.
The Gaussian curvature $K^{\eps}=K^{\eps X_{0}}$ of $S$ in  $(M,g^{\eps})$ is defined by the Gauss formula
	\begin{equation}\label{eq:curvatureEmbed} 
	K^{\eps}=  K_{\mathrm{ext}}^{\eps} + \det (\II^{\eps}),
	\end{equation}
where, given a frame $(X,Y)$ of $TS$,  the extrinsic curvature $K_{\mathrm{ext}}^{\eps}$ is
	\begin{equation}\label{defn:extrinsic-curvature}
	K_{\mathrm{ext}}^{\eps} = \frac{\big\langle \overline\nabla^{\eps}_{X}\overline\nabla^{\eps}_{Y}Y-\overline\nabla^{\eps}_{Y}\overline\nabla^{\eps}_{X}Y-\overline\nabla^{\eps}_{[X,Y]}Y, X\big\rangle_{g^{\eps}}}
		{|X|_{g^{\eps}}^{2}|Y|_{g^{\eps}}^{2}-\langle X,Y \rangle_{g^{\eps}}^{2}},
	\end{equation}
and the determinant $\det \II^{\eps}$  of the second fundamental form  is 
	\begin{equation}\label{den:determinant-II}
	\det \II^{\eps} =  \frac{\big\langle \II^{\eps}(X,X),\II^{\eps}(Y,Y)\big\rangle_{g^{\eps}}-\big\langle \II^{\eps}(X,Y), \II^{\eps}(X,Y)\big\rangle_{g^{\eps}}}
		{|X|_{g^{\eps}}^{2}|Y|_{g^{\eps}}^{2}-\langle X,Y \rangle_{g^{\eps}}^{2}}. 
	\end{equation}
Both these quantities are independent on the frame $(X,Y)$ of $TS$ chosen to compute them. 

	
\subsection{Proof of Theorem~\ref{t:kconverge}}

To prove the theorem, we  explicitly compute  the asymptotic of the quantities in limit~(\ref{defn:KS-as-limit}). 
Let us fix a characteristic point $p$, and, in a neighbourhood of~$p$, let us fix an oriented orthonormal frame $(X_{1},X_{2})$ of $D$ and a submersion $f$ defining $S$.

The determinant of the bilinear form $B_{p}^{\eps X_{0}}$ is homogeneous in $\eps$, and satisfies
\begin{equation} \label{eq:expansion-denominator}
	\det B^{\eps X_{0}}_{p} =\frac{\det  B^{X_{0}}_{p}}{ \eps^{2} } = \frac{B_{p}^{X_{0}}(X_{1},X_{2})^{2} }{ \eps^{2} } =\frac{(c_{12}^{0}(p))^{2}}{\eps^{2}},
\end{equation}
where $c_{12}^{0}$ is defined in~(\ref{StructtureCoeff}).
Therefore, in order to prove the convergence of the limit in~(\ref{defn:KS-as-limit}), it suffices to show that the Gaussian curvature $K^{\eps X_{0}}_{p}$ at $p$ diverges at most as $1/\eps^{2}$.
	\\

Let us start with the computation of the determinant~(\ref{den:determinant-II}) of the second fundamental form at a characteristic point. 
It is convenient to  write the second fundamental form as 
	\[
	\II^{\eps}(X,Y)=\big \langle \overline\nabla^{\eps}_{X}Y,N^{\eps} \big\rangle N^{\eps}.
	\]
where $N^{\eps}$ is the Riemannian unitary gradient of $f$, i.e.,
\[
	N^{\eps}=\frac{(X_{1}f)X_{1}+(X_{2}f)X_{2}+\eps(X_{0}f)X_{0}}{\sqrt{(X_{1}f)^{2}+(X_{2}f)^{2}+\eps(X_{0}f)^{2}}}.
\]
At the characteristic point $p$, the gradient $N^{\eps}(p)$ simplifies to
	\begin{equation}\label{eq:normal-vector}
	N^{\eps}(p) = \eps\,\sign(X_{0}f)~X_{0}(p). 
	\end{equation}
To compute~(\ref{den:determinant-II}) one needs to choose a frame of $TS$;  we will use the frame $(F_{1},F_{2})$  with
\begin{equation}\label{eq:frame-TS}
	F_{i}=(X_{0}f)X_{i}-(X_{i}f)X_{0} \qquad \mbox{for } i=1,2.
\end{equation}
This frame is well-defined for $X_{0}f\neq 0$; in particular, it is suited to calculate the Gaussian curvature at the characteristic points. 
Recall that the horizontal Hessian of $f$ is 
\[
	\mathrm{Hess}_{H} (f) = 
		\begin{pmatrix}
		X_{1}X_{1}f & X_{1} X_{2} f\\
		X_{2}X_{1}f & X_{2} X_{2} f
		\end{pmatrix}.
\]

\begin{lemma}\label{lem:det-II}
	Let $p\in S$ be a characteristic point. Then, in the previous notations, for every $\eps>0$, the determinant~(\ref{den:determinant-II}) of the second fundamental form in $p$ is
	\[
	\det \II^{\eps}(p) =   \frac{1}{\eps^{2}}  \bigg(\frac{ \det \hess_{H} f(p)}{ (X_{0}f(p))^2  } - \frac{(c_{12}^{0}(p))^{2}}{4}  \bigg)
	+O(1).
	\]
\end{lemma}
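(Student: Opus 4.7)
The strategy is to evaluate the defining expression~(\ref{den:determinant-II}) for $\det\II^\eps$ on the frame $(F_1,F_2)$ directly at $p$, exploiting that $X_1f(p)=X_2f(p)=0$ forces large simplifications.

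First, at $p$ one has $F_i|_p=(X_0f)(p)X_i(p)$, so by $g$-orthonormality of $(X_1,X_2)$ the denominator in~(\ref{den:determinant-II}) reduces at $p$ to $(X_0f)^4(p)$, independent of $\eps$. Moreover, $N^\eps(p)=\eps\sign(X_0f)\,X_0(p)$ by~(\ref{eq:normal-vector}). Setting $h_{ij}:=\langle\overline\nabla^\eps_{F_i}F_j,N^\eps\rangle_{g^\eps}(p)$, the numerator of~(\ref{den:determinant-II}) at $p$ equals $h_{11}h_{22}-h_{12}h_{21}$; the symmetry of $\II^\eps$ (equivalently, torsion-freeness of $\overline\nabla^\eps$ combined with $F_if=0$) forces $h_{12}=h_{21}$.

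The bulk of the work is to compute $h_{ij}$. Applying the Leibniz rule for $\overline\nabla^\eps$ to $F_j=(X_0f)X_j-(X_jf)X_0$ and evaluating at $p$ (using $X_jf(p)=0$ and $F_i|_p=(X_0f)(p)X_i(p)$) gives
\[
\overline\nabla^\eps_{F_i}F_j\big|_p=(X_0f)(p)\Big[(X_iX_0f)X_j+(X_0f)\overline\nabla^\eps_{X_i}X_j-(X_iX_jf)X_0\Big]_p.
\]
Pairing with $N^\eps(p)$ picks out only the $X_0$-component, weighted by $\sign(X_0f)/\eps$ (since $\langle X_0,X_0\rangle_{g^\eps}=1/\eps^2$ and $\langle X_0,X_k\rangle_{g^\eps}=0$ for $k=1,2$). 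Reading off the $X_0$-component of $\overline\nabla^\eps_{X_i}X_j$ from~(\ref{eqs:nablaEps}), the decisive observation is that for $i\neq j$ this component contains the singular term $c_{ij}^0/(2\eps^2)$, which is precisely the source of the $1/\eps^2$ blow-up. A short calculation yields, at $p$,
\begin{align*}
h_{ii} &= -\tfrac{|X_0f|}{\eps}\,X_iX_if + O(\eps),\\
h_{12}=h_{21} &= \tfrac{\sign(X_0f)}{2\eps}\big((X_0f)^2\,c_{21}^0-2(X_0f)\,X_1X_2f\big)+O(\eps).
\end{align*}

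Finally, I would form $h_{11}h_{22}-h_{12}^2$ to leading order $1/\eps^2$, divide by $(X_0f)^4(p)$, and simplify using the bracket identity $[X_1,X_2]f(p)=-c_{12}^0(p)(X_0f)(p)$ (which holds because $X_if(p)=0$) in the form
\[
(X_1X_2f)(X_2X_1f)(p)=(X_1X_2f)^2(p)+c_{12}^0(p)(X_0f)(p)\,X_1X_2f(p).
\]
This identity converts the leftover combination $(X_1X_1f)(X_2X_2f)-(X_1X_2f)^2+c_{21}^0(X_0f)\,X_1X_2f$ arising from $h_{11}h_{22}-h_{12}^2$ into $\det\hess_{H}f(p)$ (using $c_{21}^0=-c_{12}^0$), while the pure $c_{21}^0$-square contribution of $h_{12}^2$ produces the $-(c_{12}^0)^2/4$ term, giving the announced expansion. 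The main non-routine step is this final algebraic manipulation: recognising that the residue of the Gaussian-curvature-like expansion is exactly the asymmetric horizontal Hessian determinant of~$f$. Everything else is careful bookkeeping of the $\eps$-powers generated by $\langle X_0,X_0\rangle_{g^\eps}=1/\eps^2$ in the inner products.
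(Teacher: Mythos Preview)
Your proposal is correct and follows essentially the same route as the paper: expand $\overline\nabla^\eps_{F_i}F_j$ at $p$ via the Leibniz rule (your displayed formula matches the paper's verbatim), pair with $N^\eps(p)$ to isolate the $X_0$-component using~(\ref{eqs:nablaEps}), and divide by the denominator $(X_0f)^4(p)$. The only cosmetic difference is that the paper writes a single closed formula for $h_{ij}$ valid for all $i,j$ and then says ``gives the result'', whereas you split into the diagonal/off-diagonal cases and spell out the bracket identity $(X_1X_2f)(X_2X_1f)=(X_1X_2f)^2+c_{12}^0(X_0f)X_1X_2f$ that turns the leading $1/\eps^2$ coefficient into $\det\hess_H f(p)$; this is exactly the hidden algebra behind the paper's last sentence.
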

\begin{proof}
Let $p$ be a characteristic point. Because $X_{1}f(p)=X_{2}f(p)=0$, one can show that, 
\[
	\overline{\nabla}^{\eps}_{F_{i}}F_{j} (p) =\Big( (X_{0}f)^{2} \overline{\nabla}^{\eps}_{X_{i}}X_{j} + (X_{0}f)(X_{i}X_{0}f)X_{j}-(X_{0}f)(X_{i}X_{j}f)X_{0}\Big)\Big|_{p},
\]
for $i,j=1,2$. 
Using formula~\eqref{eq:normal-vector} for $N^{\eps}$, one finds that only the component along $X_{0}$ plays a role in the second fundamental form in $p$. Thus, using the covariant derivatives in~(\ref{eqs:nablaEps}),
\[
	\langle \overline{\nabla}^{\eps}_{F_{i}}F_{j}, N^{\eps}\rangle\big|_{p}  
	=- \frac{|X_{0}f(p)|}{\eps}\bigg(  X_{i}X_{j}f + (X_{0}f)\frac{c_{ij}^{0}}{2}+(X_{0}f)\eps^{2}\frac{c_{0i}^{j}+ c_{0j}^{i}}{2} \bigg)\bigg|_{p},
\]
for $i, j=1,2$. This, together with $\big( |F_{1}|^{2}|F_{2}|^{2}-\langle F_{1}, F_{2}\rangle^{2}\big) \big|_{p}= (X_{0}f(p))^{4}$,
gives the result.
\end{proof}

Next, the extrinsic curvature~(\ref{defn:extrinsic-curvature})  is the sectional curvature of the plane $T_{p}S$ in $M$, which is known when $X_{0}$ is the Reeb vector field and $\eps=1$; this can be found for instance in \cite[Prop.~14]{barilari:2020aa}.
In our setting, the resulting expression for $\eps\to0$ is the following.

 \begin{lemma}\label{lem:ext-curvature}
	Let $p\in S$ be a characteristic point. Then, for every $\eps>0$, 
	\begin{equation*} \label{eq:c.ext-char}
		K_{\mathrm{ext}}^{\eps} (p)= -\frac{3}{4\eps^{2}} (c_{12}^{0}(p))^{2} +O(1).
	\end{equation*} 
\end{lemma}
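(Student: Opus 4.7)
At the characteristic point $p$ we have $X_1 f(p) = X_2 f(p) = 0$, so the tangent frame from \eqref{eq:frame-TS} collapses to $F_i(p) = (X_0 f)(p)\,X_i(p)$ for $i=1,2$. Consequently $T_p S = \spa(X_1|_p, X_2|_p) = D_p$, and $(X_1|_p, X_2|_p)$ is a $g^\eps$-orthonormal basis of $T_pS$. Since the extrinsic curvature does not depend on the frame chosen in \eqref{defn:extrinsic-curvature}, it equals at $p$ the sectional curvature of the distribution plane:
\[
K_{\mathrm{ext}}^{\eps}(p) = \big\langle \overline{\nabla}^{\eps}_{X_{1}}\overline{\nabla}^{\eps}_{X_{2}}X_{2} - \overline{\nabla}^{\eps}_{X_{2}}\overline{\nabla}^{\eps}_{X_{1}}X_{2} - \overline{\nabla}^{\eps}_{[X_{1},X_{2}]}X_{2},\ X_{1}\big\rangle_{g^{\eps}}\Big|_{p}.
\]

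The strategy is to substitute the connection formulas \eqref{eqs:nablaEps} into each of the three summands and extract the $1/\eps^{2}$ contributions as $\eps \to 0$. Inspection of \eqref{eqs:nablaEps} shows that the only coefficients carrying a $1/\eps^{2}$ prefactor are the $c_{12}^{0}/\eps^{2}$ appearing in $\overline{\nabla}^{\eps}_{X_{0}}X_{1}$ and $\overline{\nabla}^{\eps}_{X_{0}}X_{2}$; everything else is $O(1)$ or $O(\eps^{2})$. In particular, $\overline{\nabla}^{\eps}_{X_{i}}X_{0}$ is obtained from $\overline{\nabla}^{\eps}_{X_{0}}X_{i}$ by torsion-freeness, so it inherits the same singular piece.

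The three sources of singularity at $p$ are then the following. First, in $\overline{\nabla}^{\eps}_{[X_{1},X_{2}]}X_{2}$, the $X_{0}$-component of $[X_{1},X_{2}]$ is $-c_{12}^{0}$, and the singular part of $\overline{\nabla}^{\eps}_{X_{0}}X_{2}$ is $-c_{12}^{0}/(2\eps^{2})\,X_{1}$; the product contributes $(c_{12}^{0})^{2}/(2\eps^{2})\,X_{1}$, hence $-(c_{12}^{0})^{2}/(2\eps^{2})$ to the final inner product after the overall minus sign. Second, in $\overline{\nabla}^{\eps}_{X_{2}}\overline{\nabla}^{\eps}_{X_{1}}X_{2}$, the $X_{0}$-component of $\overline{\nabla}^{\eps}_{X_{1}}X_{2}$ has a finite piece $c_{21}^{0}/2 = -c_{12}^{0}/2$; applying $\overline{\nabla}^{\eps}_{X_{2}}$ hits the singular $X_{1}$-piece of $\overline{\nabla}^{\eps}_{X_{2}}X_{0}$, producing $(c_{12}^{0})^{2}/(4\eps^{2})\,X_{1}$ and thus $-(c_{12}^{0})^{2}/(4\eps^{2})$ after the minus sign. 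Third, $\overline{\nabla}^{\eps}_{X_{1}}\overline{\nabla}^{\eps}_{X_{2}}X_{2}$ is $O(1)$: indeed $\overline{\nabla}^{\eps}_{X_{2}}X_{2}$ has only an $\eps^{2}$-small $X_{0}$-component and a bounded $X_{i}$-component, and differentiating once more by $X_{1}$ yields no singular part. Summing gives
\[
K_{\mathrm{ext}}^{\eps}(p) = -\frac{(c_{12}^{0}(p))^{2}}{2\eps^{2}} - \frac{(c_{12}^{0}(p))^{2}}{4\eps^{2}} + O(1) = -\frac{3}{4\eps^{2}}(c_{12}^{0}(p))^{2} + O(1).
\]

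The main obstacle is simply careful bookkeeping of signs and prefactors across many terms. The key simplifications are (a) that at $p$ the frame $(F_1, F_2)$ reduces to the orthonormal frame $(X_1, X_2)$, killing the contribution of the $(X_i f)X_0$ part of $F_i$ and eliminating the denominator of \eqref{defn:extrinsic-curvature}; and (b) that torsion-freeness funnels every singular contribution into the single structure constant $c_{12}^{0}$, so that one never has to expand more than the three terms listed above. A useful cross-check is the Heisenberg group, where $c_{12}^{0} = -1$ and the sectional curvature of the distribution plane in $g^\eps$ is the classical value $-3/(4\eps^{2})$.
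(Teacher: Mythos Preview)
Your proposal is correct and follows precisely the approach the paper takes: at the characteristic point $p$ one may use the $g^\eps$-orthonormal frame $(X_1,X_2)$ of $T_pS=D_p$ in \eqref{defn:extrinsic-curvature}, and then substitute the connection formulas \eqref{eqs:nablaEps}. Your term-by-term extraction of the $1/\eps^2$ contributions from $\overline{\nabla}^{\eps}_{X_0}X_1$, $\overline{\nabla}^{\eps}_{X_0}X_2$ (and their torsion-free counterparts) is exactly the computation the paper alludes to but does not spell out, and your bookkeeping of the three singular pieces $0,\ -(c_{12}^0)^2/(4\eps^2),\ -(c_{12}^0)^2/(2\eps^2)$ is accurate.
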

\begin{proof}
	To compute the extrinsic curvature  we use  the frame $(X_{1},X_{2})$ of $TM$, which coincides with $T_{p}S=D_{p}$ at the characteristic point $p$.
	Then, to compute 
	\[
	K_{\mathrm{ext}}^{\eps}(p)=\langle \overline\nabla^{\eps}_{X_{1}}\overline\nabla^{\eps}_{X_{2}}X_{2}-\overline\nabla^{\eps}_{X_{2}}\overline\nabla^{\eps}_{X_{1}}X_{2}-\overline\nabla^{\eps}_{[X_{1},X_{2}]}X_{2}, X_{1}\rangle\Big|_{p}
	\]
 it suffices to use the expressions~(\ref{eqs:nablaEps}).
\end{proof}

\begin{remark} 
	Following the proof of Lemma~\ref{lem:det-II} and Lemma~\ref{lem:ext-curvature}, the exact expressions for $\det \II^{\eps}(p)$ and $K_{\mathrm{ext}}^{\eps}(p)$ at a characteristic point $p$ are, for all $\eps>0$,
	\begin{align*}
	\det \II^{\eps}(p) =&  
		+\frac{1}{\eps^{2}}\bigg( \frac{ \det \hess_{H} f}{ (X_{0} f)^2  } - \frac{(c_{12}^{0})^{2}}{4} \bigg) \Big|_{p}
	+ \eps^{2}  \bigg(c_{01}^{1} c_{02}^{2}-\frac{\big(c_{02}^{1}+c_{01}^{2}\big)^2}{4}\bigg)\Big|_{p} \\
	& 	 +\frac{1}{X_{0}f(p)}\bigg( c_{02}^{2} X_{1}X_{1}f +c_{01}^{1}X_{2}X_{2}f-\frac{c^{2}_{01} + c^{1}_{02}}{2}(X_{2}X_{1}f + X_{1}X_{2}f) \bigg)\Big|_{p},\\
	K_{\mathrm{ext}}^{\eps} (p)= &-\frac{3}{4}\frac{ (c_{12}^{0}(p))^2}{ \eps^{2}} -  \eps^{2} \bigg( c_{01}^{1} c_{02}^{2}-\frac{(c_{01}^{2}+c_{02}^{1})^2}{4} \bigg) \Big|_{p}\\
	&
	+\bigg( X_{2} (c_{12}^{1})  - X_{1}(c_{12}^{2})- (c_{12}^{1})^2 - (c_{12}^{2})^2+c_{12}^{0}  \frac{c_{01}^{2} - c_{02}^{1} }{2}\bigg) \Big|_{p}.
	\end{align*}
If one chooses as transversal vector field the Reeb vector field of the contact sub-Riemannian manifold, then one recognises the first and the second functional invariants of the sub-Riemannian structure, defined in \cite[Ch.~17]{ABB19}.
	Finally, notice that these expressions are still valid for non-contact distributions.
\end{remark}

\begin{proof}[Proof of Theorem~\ref{t:kconverge}] 
	In the previous notations, due to the Gauss formula~(\ref{eq:curvatureEmbed}), Lemma~\ref{lem:det-II} and Lemma~\ref{lem:ext-curvature}, the Gaussian curvature at a characteristic point $p$ satisfies 
	\[
	K^{\eps}_{p}=K^{\eps X_{0}} _{p}
		= \frac{(c_{12}^{0}(p))^{2}}{\eps^{2}}  \bigg(-1+\frac{ \det \hess_{H} f(p)}{ ~[X_{2}, X_{1}]f(p)^2  }  \bigg) + O(1).
	\]
Here we have used that $ c_{12}^{0}(p)X_{0}f(p) = [X_{2}, X_{1}]f(p)$ at $p$, which holds due to definition~\eqref{StructtureCoeff} and $X_{1}f(p)=X_{2}f(p)=0$.
Using formula~(\ref{eq:expansion-denominator}) for the determinant of $B^{\eps X_{0}}_{p}$, one finds that 	
\begin{equation}\label{eq:development-limit}
	\frac{K^{\eps X_{0}}_{p} }{\det B_{p}^{\eps X_{0}}} = -1+\frac{ \det \hess_{H} f(p)}{ [X_{2}, X_{1}]f(p)^2   } +O(\eps^{2}),
\end{equation}
which shows that the limit~(\ref{defn:KS-as-limit}) is finite. 
Moreover,  $\widehat K_{p}$ is independent of $X_{0}$ because the transversal vector field $X_{0}$ is absent in the constant term of equation~(\ref{eq:development-limit}).
\end{proof}

Formula~(\ref{eq:development-limit})  is useful to compute  $\widehat K_{p}$ explicitly, as it contains only derivatives of the submersion $f$; thus, let us enclose it with the following corollary.

\begin{corollary}\label{cor:formula-for-Khat} 
Let $p$  be a characteristic point of $S$. Let $f$ be a local submersion of class $C^{2}$ describing $S$, and let $(X_{1},X_{2})$ be a local oriented orthonormal  frame   of $D$. Then, 
	\begin{equation}\label{eq:Khat-as-hessian}
			\widehat K_{p} = -1 + \frac{\det \hess_{H} f(p)}{[X_{2}, X_{1}]f(p)^{2}}.
	\end{equation}
\end{corollary}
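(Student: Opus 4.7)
The plan is to read off this formula directly from the asymptotic expansion derived while proving Theorem~\ref{t:kconverge}. Indeed, equation~(\ref{eq:development-limit}) in that proof states
\[
\frac{K^{\eps X_{0}}_{p}}{\det B_{p}^{\eps X_{0}}} = -1 + \frac{\det \hess_{H} f(p)}{[X_{2}, X_{1}]f(p)^{2}} + O(\eps^{2}),
\]
where $f$ is a local submersion defining $S$ near $p$ and $(X_{1},X_{2})$ is a local oriented orthonormal frame of $D$. The constant term on the right already coincides with the claimed expression for $\widehat K_{p}$, so the corollary is obtained simply by letting $\eps\to 0$.

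Concretely, I would proceed in three short steps. First, I would choose any vector field $X_{0}$ transverse to $D$ near $p$ (for instance, completing $(X_{1},X_{2})$ to a local frame), which is legitimate since Theorem~\ref{t:kconverge} asserts that $\widehat K_{p}$ does not depend on such a choice. Second, I would invoke the equality above from the proof of Theorem~\ref{t:kconverge}; this is where all of the work is hidden, namely the asymptotic expansions of $\det \II^{\eps}(p)$ (Lemma~\ref{lem:det-II}) and of $K_{\mathrm{ext}}^{\eps}(p)$ (Lemma~\ref{lem:ext-curvature}), combined via the Gauss formula~(\ref{eq:curvatureEmbed}) and the identity $c_{12}^{0}(p)\,X_{0}f(p) = [X_{2},X_{1}]f(p)$ valid at a characteristic point. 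Third, passing to the limit $\eps\to 0$ in the displayed equation and using the definition~(\ref{defn:KS-as-limit}) of $\widehat K_{p}$ yields~(\ref{eq:Khat-as-hessian}).

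There is essentially no obstacle here: the corollary is a bookkeeping reformulation of what was already established inside the proof of Theorem~\ref{t:kconverge}. The only mild point worth underlining is that the right-hand side of~(\ref{eq:Khat-as-hessian}) is manifestly independent of the auxiliary vector field $X_{0}$ (it involves only $f$ and the horizontal frame), which is consistent with the intrinsic character of $\widehat K_{p}$ guaranteed by Theorem~\ref{t:kconverge}, and also well-defined because $[X_{2},X_{1}]f(p)\neq 0$ by the contact condition at the characteristic point~$p$.
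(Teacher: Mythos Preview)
Your proposal is correct and matches the paper's approach: the corollary is presented there simply as a restatement of equation~(\ref{eq:development-limit}) from the proof of Theorem~\ref{t:kconverge}, obtained by passing to the limit $\eps\to 0$. No separate argument is given in the paper beyond this.
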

Note that both $\det \hess_{H} f(p)$ and $[X_{2}, X_{1}]f(p)$ calculated at the characteristic point $p$ are invariant with respect to the frame $(X_{1},X_{2})$.
Moreover, we emphasise that their ratio, which appears in \eqref{eq:Khat-as-hessian}, is independent on the choice of $f$.


\section{Local study near a characteristic point}\label{sec:local-study}
	
	In this section, we prove Proposition~\ref{prop:Khat-and-eigenvalues}, and we discuss the local qualitative behaviour of the characteristic foliation near $\Sigma(S)$ in relation to the metric coefficient $\widehat K$; next, we estimate the length of a semi-leaf converging to a point, proving Proposition~\ref{prop:leavesFiniteLength}.
	
Let us fix a characteristic point $p$ in $\Sigma(S)$, and a characteristic vector field $X$. 
	Since $X(p)=0$, there exists a well-defined linear map $DX(p):T_{p}S\to T_{p}S$. 
Indeed, let $e^{tX}$ be the flow of $X$. 
The pushforward  of the flow gives,  for every $x$ in $S$, a family of linear maps $e^{tX}_{*}:T_{x}S\to T_{e^{tX}(x)}S$.
Since $e^{tX}(p)=p$ for all $t$, then the preceding gives the linear flow $e^{tX}_{*}:T_{p}S\to T_{p}S$, whose infinitesimal generator is the differential $DX(p)$.
	
\begin{definition} \label{defn:non-degenerate}
A characteristic point $p\in \Sigma(S)$  is  \textit{non-degenerate} if, given  a characteristic vector field $X$ of $S$, the differential $DX(p)$ is invertible.
Otherwise, $p$ is called \textit{degenerate}. 
\end{definition}

\begin{remark}\label{rmq:DX-multiple} 
Condition (\ref{eq:non-deneric-cvf}) in the definition of  characteristic vector field ensures that the degeneracy of a characteristic point is independent on the choice of characteristic vector field.
\end{remark}

Since $T_{p}S$ coincides with $D_{p}$ at the characteristic point $p$, we can endow $T_{p}S$ with a metric; thus, $DX(p)$ admits a well-defined determinant and trace. 
Now, let $X$ be the vector field  $X=a_{1}X_{1}+a_{2}X_{2}$, where $(X_{1}, X_{2})$ is an orthonormal oriented frame of $D$ and $a_{i}\in C^{1}(S)$, for $i=1,2$.
Then, in the frame defined by $(X_{1}, X_{2})$ one has
	\begin{equation}\label{eq:DX} 
	DX = \begin{pmatrix}X_{1}a_{1} & X_{2}a_{1} \\ X_{1}a_{2} & X_{2}a_{2} \end{pmatrix},
	\end{equation}
and the formulas for the determinant and the trace are
	\begin{align}
	\det DX &= (X_{1}a_{1})(X_{2}a_{2})-(X_{1}a_{2})(X_{2}a_{1}),\label{eq:det-DX}\\
	 \tr DX &= \dive X =(X_{1}a_{1})+(X_{2}a_{2}). \label{eq:tr-DX}
	\end{align}	

	
\subsection{Proof of Proposition~\ref{prop:Khat-and-eigenvalues}} 
Let us fix a characteristic point $p$ in $\Sigma(S)$.
We claim that the right-hand side of~(\ref{eq:Khat-and-DX(p)}) is independent on the choice of the characteristic vector field $X$. 
Indeed, due to Remark~\ref{rmq:proportionality-cvf} any two characteristic vector fields are multiples by nonzero functions, thus, at characteristic point $p$, their differentials are multiples by nonzero scalars; precisely, if $Y=\phi X $, for $\phi$ in $C^{1}(S)$, then  one has $DY(p)=\phi(p)DX(p)$. Thus, the claim follows because both determinant and trace-squared are  homogenous of the degree two. 

Thus, we fix a local submersion $f$ defining $S$ near $p$, and the characteristic vector field $X_{f}=(X_{1}f)X_{2}-(X_{2}f)X_{1}$ defined in \eqref{eq:c-vf}.
Using expression~(\ref{eq:DX}) for the differential of a vector field, we get 
\[
	DX_{f}(p)=
	\begin{pmatrix} 
	-X_{1}X_{2}f(p) & -X_{2}X_{2}f (p)\\
	X_{1}X_{1}f (p)& X_{2}X_{1}f(p)
	\end{pmatrix}.
\]
Thus, using expressions~(\ref{eq:det-DX}) and~(\ref{eq:tr-DX}) for the determinant and the trace, we find that $\det DX_{f}(p) = \det \hess_{H}f (p)$, and $ \tr DX_{f}(p) = [X_{2},X_{1}]f(p)$.
In conclusion, 
\[
	\frac{\det DX_{f}(p) }{\tr DX_{f}(p)^{2}} = \frac{\det \hess_{H}f (p)}{[X_{2},X_{1}]f(p)^{2}},
\]	
which, together with Corollary~\ref{cor:formula-for-Khat}, gives the desired result.

\hfill $\square$
\\

The eigenvalues of  the linearisation $DX(p)$ of a characteristic vector field $X$ can be written as a function of $\widehat K_{p}$ by rearranging equation \eqref{eq:Khat-and-DX(p)}, as in the following corollary.
\begin{corollary} In the hypothesis of Proposition~\ref{prop:Khat-and-eigenvalues}, let $\lambda_{+}(X,p)$ and $\lambda_{-}(X,p)$ be the two eigenvalues of $DX(p)$. Then
\begin{equation}\label{eq:eigenvalues-by-K-hat}
	\lambda_{\pm}(X,p)=\tr DX(p) \bigg(\frac{1}{2}\pm \sqrt{-\frac{3}{4}-\widehat{K}_{p}}\bigg).
\end{equation}
\end{corollary}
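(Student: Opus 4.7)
The statement is a direct algebraic consequence of Proposition~\ref{prop:Khat-and-eigenvalues}, so my plan is short. The eigenvalues of the $2\times 2$ endomorphism $DX(p):T_pS\to T_pS$ are the roots of its characteristic polynomial
\[
\lambda^{2} - \tr DX(p)\,\lambda + \det DX(p) = 0,
\]
namely
\[
\lambda_{\pm}(X,p) = \frac{\tr DX(p)}{2}\pm \sqrt{\frac{(\tr DX(p))^{2}}{4} - \det DX(p)}.
\]
Proposition~\ref{prop:Khat-and-eigenvalues} guarantees $\tr DX(p)\neq 0$, so I may factor $(\tr DX(p))^{2}$ out of the discriminant.

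Factoring gives
\[
\lambda_{\pm}(X,p) = \tr DX(p)\left(\frac{1}{2} \pm \sqrt{\frac{1}{4} - \frac{\det DX(p)}{(\tr DX(p))^{2}}}\,\right),
\]
where I have absorbed $|\tr DX(p)|$ into $\tr DX(p)$ at the cost of switching signs inside the $\pm$, which is harmless because both signs are present. Now I substitute the identity from Proposition~\ref{prop:Khat-and-eigenvalues}, namely $\det DX(p)/(\tr DX(p))^{2} = 1 + \widehat{K}_{p}$, to obtain
\[
\lambda_{\pm}(X,p) = \tr DX(p)\left(\frac{1}{2} \pm \sqrt{-\frac{3}{4}-\widehat{K}_{p}}\,\right),
\]
which is the claimed formula.

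There is no genuine obstacle: the only subtlety worth noting (and which I would mention in a one-line remark) is the sign convention when pulling $\tr DX(p)$ outside the square root, and the fact that when $\widehat{K}_p > -3/4$ the eigenvalues become complex conjugates, corresponding to the focus/center case for the linearised dynamics at $p$, consistent with the qualitative classification discussed in the corollaries that follow.
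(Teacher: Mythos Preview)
Your proof is correct and follows essentially the same route as the paper: both identify the eigenvalues as roots of the characteristic polynomial $z^{2}-\alpha z+\alpha^{2}(\widehat K_{p}+1)=0$ (with $\alpha=\tr DX(p)$) via Proposition~\ref{prop:Khat-and-eigenvalues} and then apply the quadratic formula. The paper's version is slightly more terse, but the content is the same.
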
 

\begin{proof}
	Let us note $\lambda_{\pm}=\lambda_{\pm}(X,p)$, and  $\alpha=\tr DX(p)$. Equation \eqref{eq:Khat-and-DX(p)} reads
	\[
	\widehat K_{p} = -1+\frac{\lambda_{+}\lambda_{-}}{\alpha^{2}}.
	\]
 Using that $\lambda_{+}+\lambda_{-}=\alpha$, equation \eqref{eq:Khat-and-DX(p)} implies that the eigenvalues satisfy the equation $z^{2}-\alpha z+\alpha^{2}(\widehat K_{p} +1)=0$, which implies  \eqref{eq:eigenvalues-by-K-hat}.
\end{proof}

\begin{remark}
It is possible to choose canonically a characteristic vector field with trace 1. Indeed, in the notations used in Remark~\ref{req:local-setting}, let us define the characteristic vector field 
\begin{equation}\label{eq:XS}
	X_{S}=\frac{(X_{1}f)X_{2}-(X_{2}f)X_{1}}{Z f},
\end{equation}
where $Z$ is the Reeb vector field of the contact form $\contact$ of $D$ defined in \eqref{eq:contact-form}, i.e., the unique vector field satisfying $ \contact(Z)=1$ and  $d\contact(Z, \cdot)=0$.
The vector field $X_{S}$ is a characteristic vector field in a neighbourhood of $p$ because it is a nonzero multiple of $X_{f}$ near $\Sigma(S)$, since $Z f(p)=[X_{2},X_{1}]f(p)\neq 0$. 
Using the latter, one can verify that $\dive X_{S}(p)=\tr DX_{S}(p)=1$.

It is worth mentioning that the vector field $X_{S}$ is independent on $f$ and on the frame $(X_{1},X_{2})$, i.e., it depends uniquely on $S$ and $(M,D,g)$. 
Moreover, the norm of $X_{S}$ satisfies $|X_{S}|_{g}^{-1}=|p_{S}|$, where $p_{S}$ is the \textit{degree of transversality} defined in \cite{Lee_2013}; in the case of the Heisenberg group, $p_{S}$ coincides with the \textit{imaginary curvature} introduced in \cite{Arcozzi:2007aa, Arcozzi2008}.
\end{remark}

	Expression (\ref{eq:eigenvalues-by-K-hat}) for the eigenvalues of the linearisation $DX(p)$ implies the following relations between the eigenvalues  and the metric coefficient $\widehat K_{p}$:
	\begin{enumerate}[label=\textit{(\roman*)}]
	\item  \label{it:saddle} $ \widehat{K}_{p}< -1 $ if and only if 
		$  \lambda_{\pm}\in \R^{*}$ with different signs;
	\item \label{it:degenerate} $ \widehat{K}_{p} = -1  $ if and only if 
		$  \lambda_{-}=0$ and $\lambda_{+}\in\R^{*}$;
	\item \label{it:node} $ -1< \widehat{K}_{p} \leq-3/4 $ if and only if 
		$ \lambda_{\pm}\in \R^{*}$ with  same sign;
	\item  \label{it:focus} $-3/4<\widehat{K}_{p} $ if and only if 
		$ \Re(\lambda_{\pm})\neq 0 \neq \Im(\lambda_{\pm})$ and $\lambda_{-}=\overline\lambda_{+} $.
	\end{enumerate}
Notice that the characteristic point $p$ is degenerate if and only if $\widehat{K}_{p} = -1$, which is case \ref{it:degenerate}.
\\

Assume that $p$ is a non-degenerate characteristic point. 
Then, the  linear dynamical system defined  by $DX(p)$ is a saddle,  a node, and a focus respectively in case~\ref{it:saddle},~\ref{it:node} and~\ref{it:focus}.
In these cases  there exists a local $C^{1}$-diffeomorphism near $p$ which sends the flow of $X$ to the flow of $DX(p)$ in $\R^{2}$, i.e., the flows are \textit{$C^{1}$-conjugate}, as proven by Hartman in  \cite{Hartman1960OnLH}.
For this theorem to hold, one needs the characteristic vector field $X$ to be of class $C^{2}$. For this reason,  in the following corollary we assume the surface $S$ to be  of class $C^{3}$.

\begin{corollary} \label{cor:qualitative-char-foli-non-deg} 
	Assume that the surface $S$ is of class $C^{3}$, and let $p$ be a non-degenerate characteristic point in $\Sigma(S)$. 
	Then, $\widehat{K}_{p} \neq -1$, and the characteristic foliation of $S$ in a neighbourhood of $p$ is $C^{1}$-conjugate to
	\begin{itemize}[-] 
		\item   a saddle if and only if  $\widehat{K}_{p}< -1  $;
		\item a node if and only if $-1< \widehat{K}_{p} \leq-3/4$;
		\item a focus if and only if $-3/4<\widehat{K}_{p} $.
	\end{itemize}	
Those chases are depicted, respectively, in the first, third and fourth image  in Figure~\ref{img:isolated-char-point}.
\end{corollary}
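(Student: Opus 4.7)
The plan is to translate the corollary into a statement about the linear system $\dot y = DX(p) y$ by means of Proposition~\ref{prop:Khat-and-eigenvalues}, and then invoke Hartman's $C^{1}$ linearization theorem from \cite{Hartman1960OnLH} to transfer the classification from the linearization back to the nonlinear flow of $X$ near $p$.

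First I would establish the inequality $\widehat K_p \neq -1$. By Proposition~\ref{prop:Khat-and-eigenvalues} one has $\tr DX(p)\neq 0$ and
\[
\widehat K_p + 1 = \frac{\det DX(p)}{(\tr DX(p))^{2}}.
\]
Non-degeneracy of $p$ means $DX(p)$ is invertible, hence $\det DX(p)\neq 0$, which yields $\widehat K_p\neq -1$. This simultaneously rules out case \ref{it:degenerate} in the list of four alternatives preceding the corollary.

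Next, the cases \ref{it:saddle}, \ref{it:node}, \ref{it:focus} already stated in the paper match the three ranges of $\widehat K_p$ appearing in the corollary with the spectrum type of $DX(p)$: a real pair of opposite signs (saddle) when $\widehat K_p<-1$, a real pair of equal signs (node) when $-1<\widehat K_p\leq -3/4$, and a complex conjugate pair with nonzero real part (focus) when $\widehat K_p>-3/4$. In each of these three situations both eigenvalues of $DX(p)$ have nonzero real part, so $p$ is a \emph{hyperbolic} zero of the vector field $X$.

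To pass from the linear picture to the actual flow of $X$, I would apply Hartman's theorem: a planar $C^{2}$ vector field with a hyperbolic equilibrium is $C^{1}$-conjugate, on a neighbourhood of the equilibrium, to its linearization. The regularity hypothesis is where the upgrade from $C^{2}$ (used in Theorem~\ref{t:kconverge}) to $C^{3}$ for $S$ matters: a local defining submersion $f$ can then be taken of class $C^{3}$, so the characteristic vector field $X_f = (X_{1}f)X_{2} - (X_{2}f)X_{1}$ from \eqref{eq:c-vf} is $C^{2}$, which is precisely what Hartman requires. Since the characteristic foliation of $S$ is the unparametrized orbit partition of any characteristic vector field, $C^{1}$-conjugacy of flows descends to $C^{1}$-conjugacy of the respective foliations, and identifies them with the standard saddle, node, or focus foliation of $\mathbb{R}^{2}$ in the three respective ranges of $\widehat K_p$.

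The delicate step is the invocation of Hartman's theorem: in higher dimension the $C^{1}$ linearization requires non-resonance hypotheses, and one must be careful to use the specifically planar, hyperbolic, $C^{2}$ version, which applies here precisely because $S$ is two-dimensional, $p$ is non-degenerate, and $S$ is assumed $C^{3}$. Once these three conditions are verified, the rest of the proof is a direct reading of the classification \ref{it:saddle}--\ref{it:focus} already laid out in the paper.
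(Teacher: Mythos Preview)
Your proposal is correct and follows essentially the same approach as the paper: the paper's argument (given in the paragraph immediately preceding the corollary) also reads off the eigenvalue type from the classification \ref{it:saddle}--\ref{it:focus} via Proposition~\ref{prop:Khat-and-eigenvalues}, then invokes Hartman's $C^{1}$-linearisation theorem, noting that the $C^{3}$ hypothesis on $S$ is precisely what makes the characteristic vector field $C^{2}$. Your write-up is in fact slightly more explicit than the paper's in tracing the regularity chain $S\in C^{3}\Rightarrow f\in C^{3}\Rightarrow X_{f}\in C^{2}$ and in deriving $\widehat K_{p}\neq -1$ from $\det DX(p)\neq 0$.
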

 
 \begin{remark}\label{remq:topolog-conju} 
 For surfaces of class  $C^{2}$, i.e., with characteristic vector fields of class $C^{1}$, one can use the Hartman-Grobman theorem, by which one recovers a $C^{0}$-conjugation to the corresponding linearisation. However, under this hypothesis,  a node and a focus become indistinguishable. 
 For the Hartman-Grobman theorem we refer to \cite[Par.~2.8]{perko2012differential}. 
Finally, for a $C^{\infty}$ surface some informations can be found in \cite{GUYSINSKY:2003aa}.
 \end{remark}
 
Next, if $p$ is a degenerate characteristic point, then we are in case~\ref{it:degenerate}. 
Thus, $\widehat K_{p}=-1$, and the differential $DX(p)$ has a zero eigenvalue with multiplicity one.
In this situation, the qualitative behaviour of the characteristic foliation does not depend uniquely on the linearisation, but also on the nonlinear dynamic along a \textit{center manifold}, i.e., an embedded curve $\mathcal{C}\subset S$ with the same regularity as $X$, invariant with respect to the flow of $X$, and tangent to the zero eigenvector of $DX(p)$.
The analogue of Corollary~\ref{cor:qualitative-char-foli-non-deg} is the following.

 \begin{corollary} \label{cor:qualitative-char-foli-deg} 
	Assume that the surface $S$ is of class $C^{2}$, and let $p$ be a degenerate characteristic point in $\Sigma(S)$. 
	Then, $\widehat{K}_{p}=-1$, and the characteristic foliation in a neighbourhood centred at $p$ is $C^{0}$-conjugate at the origin to the orbits of a system of the form
	\begin{equation}\label{eq:system-center-manif}
	\left\{\begin{array}{l}
	\dot u = \phi(u) \\
	\dot v = v
	\end{array}\right.,
	\end{equation}
	for a function $\phi$ with $\phi(0)=\phi'(0)=0$. 
	If $p$ is isolated, then the characteristic foliation described in \eqref{eq:system-center-manif}  at the origin is either a saddle, a saddle-node, or a node; those cases are depicted, respectively, in the first, second, and third image in Figure~\ref{img:isolated-char-point}.
\end{corollary}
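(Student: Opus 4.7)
The first assertion is immediate from Proposition~\ref{prop:Khat-and-eigenvalues}: degeneracy gives $\det DX(p)=0$ and, since $\tr DX(p)\neq 0$, the formula for $\widehat K_p$ yields $\widehat K_p=-1$; moreover, the spectrum of $DX(p)$ is $\{0,\alpha\}$ with $\alpha:=\tr DX(p)\neq 0$.

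For the normal form, the plan is to combine the $C^{1}$ center manifold theorem with Shoshitaishvili's reduction principle. Since $S$ is $C^{2}$, a characteristic vector field $X$ is $C^{1}$ on a neighbourhood of $p$. Choose local coordinates $(u,v)$ on $S$ centred at $p$ with $\partial_{u}|_{0}\in\ker DX(p)$ and $\partial_{v}|_{0}$ spanning the $\alpha$-eigenspace. The center manifold theorem produces a $C^{1}$ invariant curve $\mathcal{C}$ tangent to $\partial_{u}|_{0}$ at $p$; after straightening $\mathcal{C}$ to the $u$-axis and rectifying the hyperbolic $v$-direction, the Shoshitaishvili reduction principle yields a local homeomorphism carrying orbits of $X$ to orbits of the decoupled system
\[
\dot u=\phi(u),\qquad \dot v=\alpha v,
\]
where $\phi\in C^{1}$ is the restriction of $X$ to $\mathcal{C}$ read in the $u$-coordinate. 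The equilibrium condition at $p$ gives $\phi(0)=0$, and the fact that the center direction is the kernel of $DX(p)$ gives $\phi'(0)=0$. Since the statement concerns only the unoriented orbits, rescaling the vector field by $1/\alpha$ when $\alpha>0$, or by $-1/\alpha$ after a time reversal when $\alpha<0$, leaves the foliation unchanged and brings $\dot v=\alpha v$ to $\dot v=v$, up to replacing $\phi$ by $\pm\phi/\alpha$, which still vanishes to order at least $2$ at $0$.

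For the classification, assume $p$ is isolated in $\Sigma(S)$. Then in the normal form $\phi$ has no zero on a punctured neighbourhood of $0$, and therefore keeps a constant sign on each of $(-\delta,0)$ and $(0,\delta)$ for some $\delta>0$. If the two signs agree, orbits on the center manifold traverse the origin from one side to the other, producing the saddle-node picture; if $u\phi(u)<0$ on both sides, the center dynamics is attracting and, coupled with the expanding $v$-direction, gives a saddle; if $u\phi(u)>0$ on both sides, the center dynamics is repelling and, coupled with the expanding $v$-direction, gives a node. These are exactly the three pictures displayed in Figure~\ref{img:isolated-char-point}.

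The main obstacle is the limited regularity: one needs a version of the reduction principle valid for $C^{1}$ vector fields, which is why the conclusion is only topological conjugacy of the foliations, and one must accept that the center manifold $\mathcal{C}$ need not be unique and is in general no smoother than $C^{1}$. Fortunately, the invariants used in the proof — the $C^{1}$ tangent line to $\mathcal{C}$ at $p$, the sign pattern of $\phi$ near $0$, and the expanding character of the $v$-direction — are all insensitive to these non-uniqueness issues, so the argument goes through under the stated hypothesis $S\in C^{2}$.
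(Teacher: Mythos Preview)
Your proof is correct and follows essentially the same route as the paper: the paper also deduces $\widehat K_p=-1$ from Proposition~\ref{prop:Khat-and-eigenvalues}, then invokes the center manifold theorem (stated in Appendix~\ref{append:center-manif} as Proposition~\ref{CenterManifold}, which already packages the $C^0$-conjugacy you attribute to Shoshitaishvili) to obtain the decoupled normal form, and finally reads off the three topological pictures from the sign pattern of $\phi$ on either side of $0$ exactly as you do. Your explicit discussion of the $C^1$ regularity constraints and the normalization $\alpha\mapsto 1$ is a welcome addition that the paper leaves implicit.
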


\begin{figure}[]
\begin{center}
\includegraphics[width=0.95\linewidth]{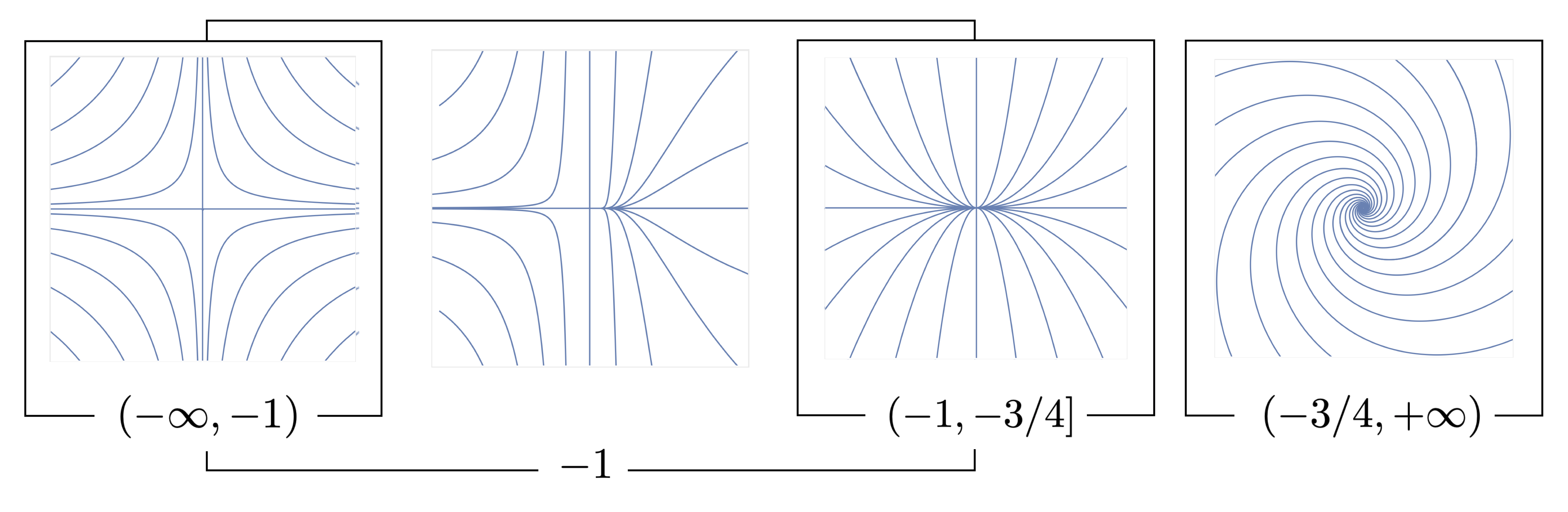}
\caption{\small The qualitative picture for the characteristic foliation at an \textit{isolated} characteristic point, with the corresponding values for~$\widehat K$. From left to right, we recognise a  saddle, a  saddle-node, a  node, and a  focus.} 
\label{img:isolated-char-point}
\end{center}
\end{figure}

	The proof of Corollary~\ref{cor:qualitative-char-foli-deg} follows from considerations on the center manifold of the dynamical system defined by $X$, which we recall in Appendix~\ref{append:center-manif}.

\begin{remark} 
A node and a focus are not distinguishable by a conjugation $C^{0}$. 
However, the center manifold of the characteristic point $p$  is an embedded curve of class $C^{1}$, thus it does not spiral around $p$. 
Therefore, the existence of a center manifold gives further properties then what is expressed in Corollary~\ref{cor:qualitative-char-foli-deg}. 
\end{remark}

To justify the last sentence of Corollary~\ref{cor:qualitative-char-foli-deg} let us get a sense of the qualitative properties of a system as (\ref{eq:system-center-manif}).
The line $\{v=0\}$, parametrised by $u$, is a center manifold of (\ref{eq:system-center-manif}), and the function $\phi$ determines the dynamic of (\ref{eq:system-center-manif}); this illustrates the fact that the nonlinear terms on a center manifold determine the dynamic near a degenerate characteristic point. 
\\

The equilibria of (\ref{eq:system-center-manif}) occur only in $\{v=0\}$, i.e., on a center manifold, and a point $(u,0)$ is an equilibrium if and only if $\phi(u)=0$.
Thus, if the characteristic point $p$ is isolated, then  $u_{0}=0$ is an isolated zero of $\phi$. 
In such case, let us note $\phi^{+}=\phi|_{u>0}$ and $\phi^{-}=\phi|_{u<0}$, and without loss of generality let us suppose that the signs of $\phi^{+}$ and $\phi^{-}$ are constant.
\begin{itemize}[-]
\item If $\phi^{+}>0$ and $\phi^{-}<0$, then the origin is a topological node. 
\item If $\phi^{+}<0$ and $\phi^{-}>0$, then the origin is a a topological saddle. 
\item If $\phi^{+}$ and $\phi^{-}$ have the same sign, then the two half spaces $\{u>0\}$ and $\{u<0\}$ have two different behaviours: one is a node, and the other one is a saddle. This gives the characteristic foliation called saddle-node.
\end{itemize} 

\begin{remark}\label{req:local-picture-for-isolated}
	For an isolated characteristic point, combining Corollary~\ref{cor:qualitative-char-foli-non-deg} and Corollary~\ref{cor:qualitative-char-foli-deg},  we obtain the four characteristic foliations depicted in Figure~\ref{img:isolated-char-point}. 
\end{remark}


\subsection{Proof of Proposition~\ref{prop:leavesFiniteLength}} 
In this section we prove the finiteness of the sub-Riemannian length of a semi-leaf converging to a point.
Since we are interested in a local property, it is not restrictive to assume the existence of a global characteristic vector field $X$ of $S$.

	Let $\ell$ be a one-dimensional leaf of the characteristic foliation of $S$, and $x\in \ell$ such that $e^{tX}(x)\to p$ as $t\to +\infty$. 
The limit point $p$ has to be an equilibrium of $X$, i.e., $X(p)=0$, hence $p$ is a characteristic point of $S$. 
Let $U$ be a small open  neighbourhood of $p$ in $S$ for which we have a coordinate chart $	\Phi : U\to B\subset \R^{2}$ with $\Phi(p)=0$, where $B$ is the open unit ball. Let $y$ be the  point of last intersection between $\ell^{+}_{X}(x)$ and the boundary $\partial U$.
Since  $L_{sR}(\ell^{+}_{X}(x))=L_{sR}(\ell|_{[x,y]})+L_{sR}(\ell^{+}_{X}(y))$ and $L_{sR}(\ell|_{[x,y]})$ is finite, it suffices to show that $L_{sR}(\ell^{+}_{X}(y))$ is finite.
We claim that there exists a constant $C>0$ such that 
\begin{equation}\label{eq:equivalence-metric}
	\frac{1}{C}|V|_{\R^{2}} \leq |V|_{g} \leq C |V|_{\R^{2}} \qquad \forall ~ V\in D\cap TS|_{U},
\end{equation}
where we have dropped  $\Phi_{*}$ in the notation. 
Indeed, let $\tilde g$ be any Riemannian extension of $g$ on the surface $S$ (for example $\tilde g=g^{X_{0}}|_{S}$).  Since $\tilde g$ is an extension, one has  $|v|_{g}=|v|_{\tilde g}$ for all $v$ in $D\cap TS$.
Equivalence \eqref{eq:equivalence-metric} follows from the local equivalence of $\tilde g$ with the pullback  by $\Phi$ of the Euclidean metric of $\R^{2}$.
Now, inequality  \eqref{eq:equivalence-metric} implies that
\begin{equation}\label{eq:sR-length-of-leaf}
	L_{sR}(\ell_{X}^{+}(y)) =\int_{0}^{+\infty}|X(e^{tX}(y))|_{g}~dt\leq C \int_{0}^{+\infty}|X(e^{tX}(y))|_{\R^{2}}~dt.
\end{equation}
At this point the proof of the finiteness of the sub-Riemannian length of $\ell_{X}^{+}(y)$  differs depending on whether $p$ is a non-degenerate or a degenerate characteristic point. \\

	First, assume that $p$ is a non-degenerate characteristic point.
Since $p$ is non-degenerate, then the set of point $w$ with $e^{tX}(w)\to p$ for $t\to+\infty$ form a manifold, called the \textit{stable manifold} at $p$ for the dynamical system defined by $X$.
In our case, since $e^{tX}(y)\to p$ for $t\to+\infty$, the semi-leaf $\ell_{X}^{+}(y)$ is contained in the stable manifold at $p$.
Moreover, the stable manifold convergence property,  precisely stated in~\cite[Par.~2.8]{perko2012differential}, shows that each trajectory inside the stable manifold converges to $p$ sub-exponentially in $t$. Precisely, if $\alpha$ satisfies $|\Re(\lambda_{\pm}(p,X))|>\alpha$, then there exists constants $C,t_{0}>0$ such that 
\begin{equation}\label{eq:approx-property}
	|e^{tX}(y)-p|_{\R^{2}}\leq Ce^{-\alpha t} \qquad \forall ~ t>t_{0}.
\end{equation}
Since $X(p)=0$, for all $t>0$  one has
\[
	\big|X(e^{tX}(y))\big|_{\R^{2}} = \big|X(e^{tX}(y)) - X(p) \big|_{\R^{2}} \leq \sup_{B}||DX(x)|| ~ |e^{tX}(y)-p|_{\R^{2}}.
\]
Due to the inequality~(\ref{eq:sR-length-of-leaf}) and~(\ref{eq:approx-property}), this shows that $L_{sR}(\ell_{X}^{+}(y))$ is finite.\\

	Next, assume that $p$ is a degenerate characteristic point. 
As we said in the introduction of Corollary~\ref{cor:qualitative-char-foli-deg}, there exists a center manifold $\mathcal{C}$ at $p$ for the dynamical system defined by $X$.
The asymptotic approximation property of the center  manifold, recalled in Proposition~\ref{approxCM}, shows that if a trajectory converges to $p$, then it approximates any center manifold exponentially fast. 
Precisely,  since $e^{tX}(y)\to p$, then there exist  constants $C,\alpha,t_{0}>0$ and a trajectory $e^{tX}(z)$ contained in $\mathcal{C}$, such that
\begin{equation} \label{eq:asyn-approx-property}
	|e^{tX}(y)-e^{tX}(z)|_{\R^{2}}\leq C e^{-\alpha t} \qquad \forall \ t \geq t_{0}.
\end{equation}
The triangle inequality implies that
	\begin{equation}\label{eq:333}
	|X( e^{tX}(y))|_{\R^{2}} \leq |X(e^{tX}(y))-X(e^{tX}(z))|_{\R^{2}}+  |X(e^{tX}(z))|_{\R^{2}}.
	\end{equation}
Due to inequality~\eqref{eq:sR-length-of-leaf}, to prove that $L_{sR}(\ell_{X}^{+}(y))$ is finite, it suffices to show that the two terms on the right-hand side of \eqref{eq:333} are integrable for $t\geq 0$. Thanks to~(\ref{eq:asyn-approx-property}) and

\[
	 |X(e^{tX}(y))-X(e^{tX}(z))|_{\R^{2}}\leq \sup_{B}||DX|| ~|e^{tX}(y)-e^{tX}(z)|_{\R^{2}},
\]
then the first term in \eqref{eq:333} is integrable.  
Next,  because $e^{tX}(z)$ is a regular parametrisation of a bounded interval inside a $C^{1}$ embedded curve (the center manifold $\mathcal{C}$), then its derivative $|X(e^{tX}(z))|_{\R^{2}}$ is integrable.
 \hfill $\square$
 
	\begin{remark}\label{rmq:finite-length-implies-convergence}
Let $X$ be a characteristic vector field of a compact surface $S$. If the $\omega$-limit set with respect to $X$ of a non-periodic leaf $\ell$ contains more then one point, then $L_{sR}(\ell_{X}^{+})=+\infty$.
Therefore, if a leaf $\ell$ does not converge to a point in any of its extremities, then the points in $\ell$ have infinite distance from the points in $S\mysetminus \ell$.
\end{remark}

In particular, if the characteristic set of a surface $S$ is empty, then the induced distance $d_{S}$ is not finite. 
For a discussion on non-characteristic domains we refer to \cite[Ch.~3]{CGN06}.


\section{Global study of the characteristic foliation} \label{sect:topolog-skeleton}
The main goal of this section is to identify  a sufficient condition for the induced distance~$d_{S}$ to be finite.
As explained in the introduction, this is done by excluding the existence of certain leaves in the characteristic foliation of $S$, as in Proposition~\ref{prop:dS-finite}. 
In this section  we assume the existence of a global characteristic vector field $X$ of $S$. 

The leaves the characteristic foliation of $S$ are precisely the orbits of the dynamical system defined by $X$, therefore we are going to call them \textit{trajectories}, stressing that they are parametrised by the flow of $X$. 
Moreover, the vector field $X$ enables us to use the notions of $\omega$-limit set and $\alpha$-limit set of a point $y$ in $S$, which are, respectively,
\[
	\omega(y,X) 
		= \Big\{ q \in S ~\Big|~ \exists ~ t_{n} \to +\infty \mbox{ such that } ~ e^{t_{n}X}(y)\to q \Big\}, \qquad
	 \alpha(y,X)=\omega(y,-X). 
\]
 The points $y$ in a leaf $\ell$ have the same limit sets, thus one  can define $\omega(\ell,X)$ and $\alpha(\ell,X)$.

\begin{proposition}\label{prop:dS-finite}  
Let $S$ be a compact, connected surface $C^{2}$ embedded in a contact sub-Riemannian  structure.
Assume that $S$  has isolated characteristic points, and that the characteristic foliation of $S$ is described by a global characteristic vector field of $S$ which does not contain any of the following trajectories:
\begin{itemize}[-]
\item nontrivial recurrent trajectories, 
\item periodic trajectories,
\item sided contours.
\end{itemize}
Then, $d_{S}$ is finite.
\end{proposition}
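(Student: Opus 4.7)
The overall aim is to show that every trajectory of the characteristic vector field $X$ has finite sub-Riemannian length and converges at each extremity to a characteristic point, and then to deduce that the combinatorial object with vertex set $\Sigma(S)$ and edges the trajectories is connected; together these yield finiteness of $d_S$.  First, I would analyse the $\omega$- and $\alpha$-limit sets of each trajectory using the topological description of flows on compact $2$-surfaces.  Since $S$ is compact and $\Sigma(S)$ is finite (isolated closed subset of a compact), every trajectory admits non-empty $\omega$- and $\alpha$-limit sets.  The standard structure theorem (Poincar\'e--Bendixson on the sphere, and the Markus--Neumann extensions on compact surfaces) asserts that each such limit set is a single fixed point, a periodic orbit, a nontrivial recurrent set, or a graphic, i.e.\ a cyclic union of characteristic points joined by heteroclinic or homoclinic separatrices.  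The hypotheses exclude periodic and nontrivial recurrent trajectories; a graphic appearing as $\omega(\ell,X)$ would by definition be approached by $\ell$, making it a sided contour, which is also excluded.  Hence $\omega(\ell,X)$, and symmetrically $\alpha(\ell,X)$, is a single characteristic point.

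Combined with Proposition~\ref{prop:leavesFiniteLength} applied to both semi-leaves of $\ell$, this gives $L_{sR}(\ell)<+\infty$ for every trajectory: splitting $\ell$ at an interior point produces two semi-trajectories each converging to a characteristic point and each of finite length.  I would then define the characteristic graph $G$ with vertex set $\Sigma(S)$ and edges the trajectories (each trajectory having two well-defined endpoints in $\Sigma(S)$ by the previous paragraph) and show that $G$ is connected.  For each connected component $C$ of $G$ let
\[
U_C = C \cup \{x\in S\mysetminus \Sigma(S) : \alpha(\ell_x,X)\in C \text{ or } \omega(\ell_x,X)\in C\},
\]
where $\ell_x$ denotes the trajectory through $x$.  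The sets $U_C$ are pairwise disjoint and cover $S$; openness at a characteristic point $p\in C$ follows from the local classification in Corollaries~\ref{cor:qualitative-char-foli-non-deg} and~\ref{cor:qualitative-char-foli-deg}, while openness at a regular point $x\in U_C$ follows from a flow-box argument together with the topological decomposition of the phase portrait into canonical regions whose bounding separatrices, by the previous paragraph, begin and end in $\Sigma(S)$.  Connectedness of $S$ then forces a single $U_C$, so $G$ is connected.

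Given $x,y\in S$, one connects $x$ to an adjacent characteristic point along its trajectory, traverses a finite edge-path in $G$ to a characteristic point adjacent to $y$, and descends to $y$ along a last trajectory; each edge has finite sub-Riemannian length by the second paragraph, so $d_S(x,y)<+\infty$.  The main obstacle will be the first paragraph: on surfaces of positive genus Poincar\'e--Bendixson does not apply directly, so one must invoke its generalisation to compact surfaces together with a careful verification that an $\omega$-limit graphic is automatically sided in the sense of the hypothesis.  The flow-box argument in the second paragraph will likewise require some attention at degenerate isolated characteristic points from Corollary~\ref{cor:qualitative-char-foli-deg} (in particular saddle-nodes), where the local foliation is less symmetric than in the hyperbolic case and where one has to verify that small neighbourhoods of the characteristic point stay inside a single $U_C$.
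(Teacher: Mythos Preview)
Your overall strategy is valid and reaches the goal, but it is organised rather differently from the paper's proof.

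The paper does not try to show that \emph{every} trajectory converges at each extremity to a characteristic point. Instead it works directly with the cell decomposition of the flow (Proposition~\ref{prop:property-cell}): for each cell $R$ and each boundary component $\Gamma$ of $R$, it observes that if every characteristic point of $\Gamma$ had only a hyperbolic sector towards $R$ then $\Gamma$ would itself be a sided contour, so some vertex $p\in\Gamma$ admits a parabolic sector towards $R$; Proposition~\ref{prop:property-cell}(ii) then forces every trajectory in $R$ to have $p$ as one of its two limits, whence (via Proposition~\ref{prop:leavesFiniteLength}) every point of $R\cup\partial R$ lies at finite $d_S$-distance from $p$ (Lemma~\ref{LemmaPropertyBondary}). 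A local-to-global step (Lemma~\ref{finalConcusion}) finishes by noting that ``being at finite $d_S$-distance'' is an equivalence relation with open classes on the connected surface $S$.

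What the paper's route sidesteps is precisely the obstacle you flag as the main one: it never has to identify an $\omega$-limit graphic with a sided contour in the sense of the definition --- in particular it avoids checking that the hyperbolic sectors all sit on one consistent side, which is where non-simple polycycles and orientation bookkeeping would enter. Your route, by contrast, yields the stronger intermediate statement that every leaf joins two characteristic points, and packages the global step as connectivity of a concrete graph rather than as an open-equivalence-class argument. Note, however, that your openness proof for $U_C$ at points on separatrices or at saddle-type characteristic points tacitly uses the same cell decomposition (you need that all trajectories in an adjacent cell share limit points lying on a boundary containing the separatrix), so the two arguments ultimately rest on the same structural input from Proposition~\ref{prop:property-cell}.
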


Let us give a formal definition of these objects.
A \textit{periodic} trajectory is a leaf of the characteristic foliation homeomorphic to a circle. 
A periodic trajectory has infinite distance from its complementary, hence it is necessary to exclude its presence for $d_{S}$ to be finite.

Next, a leaf $\ell$ is \textit{recurrent} if $\ell\subset\omega(\ell,X)$ and $\ell\subset\alpha(\ell,X)$. 
A \textit{nontrivial} recurrent  trajectory is a recurrent trajectory which is not an equilibrium nor a periodic trajectory. 
Because the $\omega$-limit   and the $\alpha$-limit set of a nontrivial recurrent trajectory contains more then one point,  then, due to Remark~\ref{rmq:finite-length-implies-convergence}, those trajectories have infinite distance from their complementary.

Lastly, a \textit{sided contour} is either a left-sided or right-sided contour. A right-sided contour (resp.~left-sided) is a family of points $p_{1}, \dots, p_{s}$ in $\Sigma(S)$ and  trajectories $\ell_{1}, \dots, \ell_{s}$ such that:
\begin{itemize}[-]
\item for all $j=1,\dots,s$, we have $\omega(\ell_{j},X)=p_{j}=\alpha(\ell_{j+1},X)$ (where $\ell_{s+1}=\ell_{1}$);
\item for every $j=1,\dots,s$, there exists a neighbourhood $U_{j}$ of $p_{j}$ such that $U_{j}$ is a right-sided  hyperbolic sector (resp.~left-sided) for  $p_{j}$ with respect to $\ell_{j}$ and $\ell_{j+1}$.
\end{itemize}
Let us give a precise definition of a hyperbolic sector. Note that, given a non-characteristic point $x\in S$, and a curve $T$ going through $x$ and transversal to the flow of $X$, the orientation defined by $X$ defines the right-hand and the left-hand connected component of $T\mysetminus\{x\}$, denoted $T^{r}$ and $T^{l}$ respectively.

\begin{definition}\label{defn:hyperbolic-sector}
Let $p$ be a characteristic point, and $\ell_{1}$ and $\ell_{2}$ be two trajectories such that $\omega(\ell_{1},X)=p=\alpha(\ell_{2},X)$.
A neighbourhood $U$ of $p$ homeomorphic to a disk is a \textit{right-sided hyperbolic sector}  (resp.~left-sided) with respect to $\ell_{1}$ and $\ell_{2}$  if, for every point   $x_{i}\in \ell_{i} \cap U$, for $i=1,2$, there exists a curve $T_{i}$ going through $x_{i}$ and transversal to the flow of $X$ such that:
\begin{itemize}[-]
\item for every point $y\in T_{1}^{r}$  (resp.~$T_{1}^{l}$) the positive semi-trajectory $\ell_{X}^{+}(y)$ starting from $y$ intersects $T_{2}^{r}$  (resp.~$T_{2}^{l}$)  before leaving~$U$;
\item the point of first intersection of $\ell^{+}_{X}(y)$ and  $T_{2}^{r}$  (resp.~$T_{2}^{l}$) converges to $x_{2}$, for $y\to x_{1}$.
\end{itemize}
\end{definition}
\begin{figure}[h]
 	\centering
	 \includegraphics[width=0.45\linewidth]{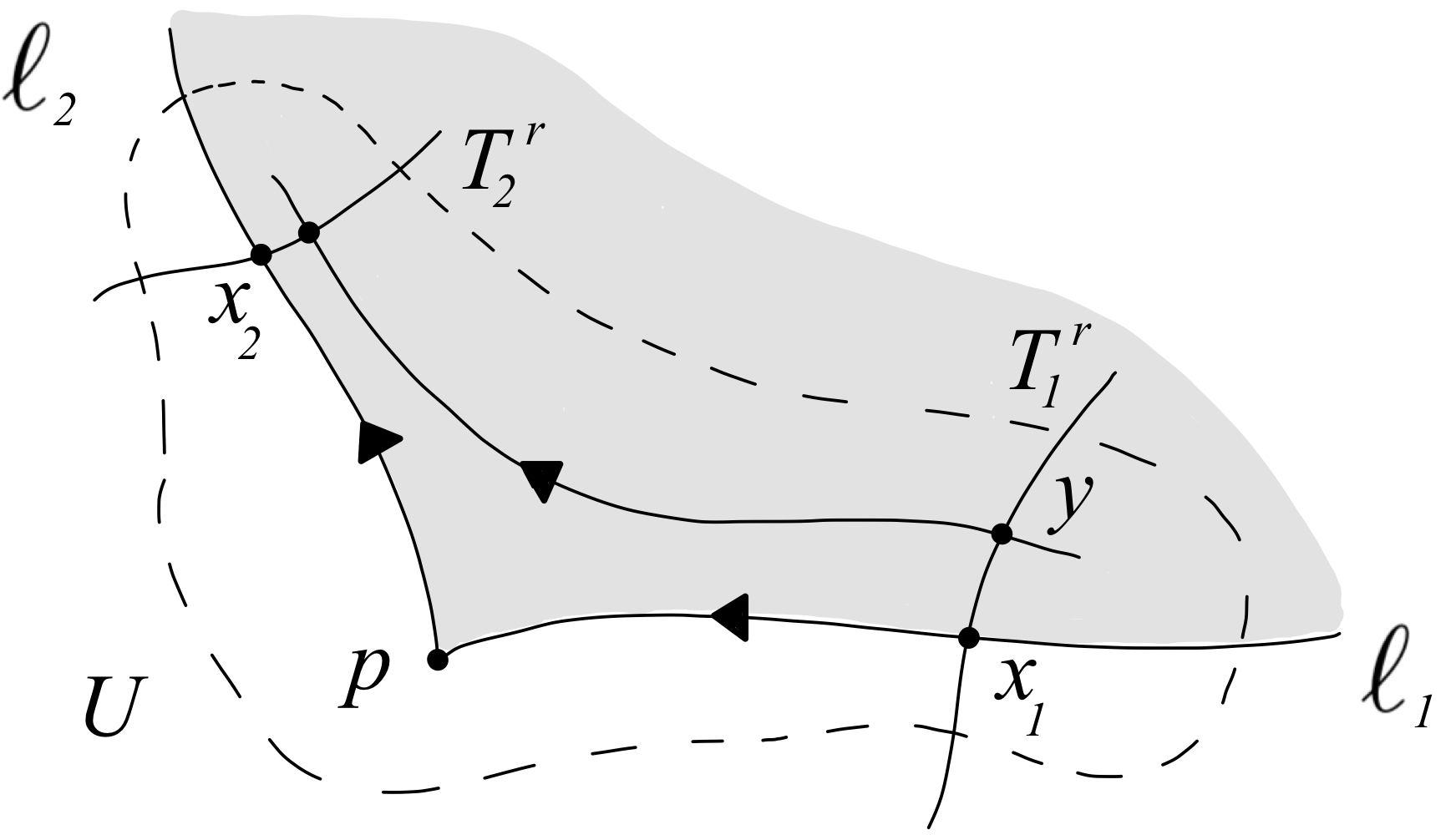}
	\caption{\small The illustration of a right-sided hyperbolic sector}
	 \label{fig:hyperbolic-sector}
\end{figure}
Note that a right-sided hyperbolic sector for $X$ is a left-sided hyperbolic sector for $-X$.
An illustration of hyperbolic sector can be found in Figure~\ref{fig:hyperbolic-sector}, an example of sided contours can be found in  Figure~\ref{fig:sided-contours}, and for the general theory we refer to  \cite[Par.~2.3.5]{ABZintrod}. 

	
	\subsection{Topological structure of the characteristic foliation}
	
	Now, assume that $S$ does not contain any nontrivial recurrent trajectories. 
To prove Proposition~\ref{prop:dS-finite}  we are going to use the topological structure of a flow. 
We resume here the relevant theory, following the exposition in \cite[Par.~3.4]{ABZintrod}.
\\

	The {\it singular trajectories} of the characteristic foliation of $S$ are precisely the  following: 
\begin{enumerate} [-]
	\item  characteristic points;
	\item  separatrices of characteristic points (see \cite[Par.~2.3.3]{ABZintrod});
	\item  isolated periodic trajectories;
	\item  periodic trajectories which contain in every neighbourhood both  periodic and non-periodic trajectories.
\end{enumerate}
The union of the singular trajectories is noted $ST(S)$, and it is closed.
The open connected components of $S\mysetminus ST(S)$ are called \emph{cells}. 
The leaves of the characteristic foliation of $S$ contained in the same cell have the same behaviour, as shown in the following proposition.

	\begin{proposition}[{\cite[Par.~3.4.3]{ABZintrod}}] \label{prop:property-cell}
Assume that the flow of $X$ has a finite number of singular trajectories. Let $R$ be a cell filled by non-periodic trajectories; then: \begin{enumerate}[(i)]
\item \label{ref:homeo} $R$ is homeomorphic to a disk, or to an annulus;
\item  the trajectories contained in $R$ have all the same $\omega$-limit and $\alpha$-limit sets;
\item the limit sets of any trajectory in $R$ belongs to $\partial R$;
\item  each connected component of $\partial R$ contains points of the $\omega$-limit or $\alpha$-limit sets.
\end{enumerate}
\end{proposition}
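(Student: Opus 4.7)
The plan is to combine the Poincar\'e--Bendixson theorem with the flow-box (rectification) structure on the regular part of $S$, exploiting compactness of $S$, finiteness of singular trajectories, and absence of nontrivial recurrent trajectories, to force every $\omega$- and $\alpha$-limit set of a trajectory in $R$ to be a connected union of singular trajectories (a characteristic point, a periodic trajectory, or a graph of separatrices joining characteristic points).

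I would first address items \textit{(ii)} and \textit{(iii)}. Fix a trajectory $\gamma\subset R$. By compactness, $\omega(\gamma)$ is a nonempty closed subset of $\overline{\gamma}\subset \overline{R}$. Since $\gamma$ is non-recurrent and the flow has no nontrivial recurrent trajectories, Poincar\'e--Bendixson under our hypotheses forces $\omega(\gamma)\subset ST(S)$; as $ST(S)\cap R=\emptyset$ by the definition of cell, we obtain $\omega(\gamma)\subset \partial R$, which proves \textit{(iii)}. For \textit{(ii)}, each possible limit-set configuration $\mathcal{L}$ among the finitely many admissible ones defines a subset $R_{\mathcal{L}}\subset R$ of trajectories with $\omega$-limit equal to $\mathcal{L}$. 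Openness of $R_{\mathcal{L}}$ follows from the flow-box theorem: two sufficiently close trajectories in $R$ cannot develop distinct limit sets without being separated by a separatrix, but every separatrix lies in $ST(S)$ and hence outside $R$. The $R_{\mathcal{L}}$ are disjoint and cover $R$, so connectedness of $R$ forces all trajectories in $R$ to share the same $\omega$-limit; the $\alpha$-case is symmetric.

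For \textit{(i)} and \textit{(iv)}, pick $x_{0}\in R$ and a maximal transversal arc $T\subset R$ through $x_{0}$, and analyse its saturation under the flow. A chain-of-flow-boxes argument along any path connecting two points of $R$ shows that this saturation equals $R$. If no trajectory returns to $T$, then $(x,t)\mapsto e^{tX}(x)$ is a homeomorphism from a strip onto $R$, so $R$ is a topological disk. If trajectories do return, the absence of periodic and nontrivial recurrent trajectories in $R$ forces $T$ to be a closed transversal and the Poincar\'e first-return map to be conjugate to a rigid rotation, making $R$ an annulus. Finally, each connected component of $\partial R$ must contain points of $\omega(\gamma_{0})$ or $\alpha(\gamma_{0})$, because otherwise one could extend the transversal section across it, contradicting maximality; this gives \textit{(iv)}.

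The main obstacle is the dichotomy in \textit{(i)}: ruling out more exotic topological types for the open surface $R$ requires delicately combining the finiteness of singular trajectories, the common limit-set property from \textit{(ii)}, and the rectification of the flow inside $R$. This is essentially the content of the classical qualitative theory of flows on compact surfaces (Andronov--Leontovich--Gordon--Maier), and relies crucially on the two-dimensionality of $S$.
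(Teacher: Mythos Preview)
The paper does not prove this proposition; it is quoted as a black box from the classical reference \cite[Par.~3.4.3]{ABZintrod} on the qualitative theory of flows on surfaces. So there is no ``paper's own proof'' to compare against, and your outline is in effect an attempt to reproduce the Andronov--Leontovich--Gordon--Maier argument. The overall strategy---Poincar\'e--Bendixson for \textit{(iii)}, an open-closed argument on $R$ for \textit{(ii)}, a transversal-saturation argument for \textit{(i)}---is the right one, and you correctly identify the dichotomy in \textit{(i)} as the delicate step.

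There are, however, genuine gaps in the sketch. In \textit{(i)}, the sentence ``forces $T$ to be a closed transversal and the Poincar\'e first-return map to be conjugate to a rigid rotation'' is not correct as written: $T$ is an arc, and an arc cannot become a circle. What one actually does when a trajectory through $T$ returns to $T$ is to build a \emph{new} closed transversal $C$ by concatenating the trajectory arc between two consecutive hits with the sub-arc of $T$ joining them, then smoothing; the annulus structure comes from the flow saturating $C$, and each non-periodic trajectory in $R$ crosses $C$ exactly once---there is no first-return map, and no rotation number enters. You still owe an argument that this saturated annulus is all of $R$. In \textit{(ii)}, openness of $R_{\mathcal L}$ does not follow from the flow-box theorem alone: you need that each admissible limit set $\mathcal L$ (an equilibrium, a periodic orbit, or a polycycle of separatrices) has a one-sided neighbourhood in which every trajectory has the same limit set; this is precisely the sector analysis (hyperbolic/parabolic) near equilibria and the tubular structure near periodic orbits, and it is where the finiteness of singular trajectories is really used. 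Your argument for \textit{(iv)} via ``extending the transversal'' is too vague to be a proof; in the standard treatment, \textit{(iv)} is read off from the explicit disk/annulus model obtained in \textit{(i)}.
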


Using this proposition, we show the following lemma.

\begin{lemma} \label{LemmaPropertyBondary} 
Let $S$ be surface satisfying the hypothesis of Proposition~\ref{prop:dS-finite} . 
Then, for every cell $R$ of the characteristic foliation of $S$, we have that 
\begin{equation*} \label{distanzaCellBondary}
	d_{S}(x,y)<+\infty \qquad \forall x ,y\in R \cup \partial R.
\end{equation*}
\end{lemma}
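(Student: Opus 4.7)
The strategy is to combine Proposition~\ref{prop:property-cell} (which describes the topology of a cell), a Poincaré--Bendixson-type classification (to identify the limit sets of trajectories in $R$ as single characteristic points), and Proposition~\ref{prop:leavesFiniteLength} (to bound lengths) to assemble finite-length paths between any two points of $\overline R = R \cup \partial R$.

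First, I would invoke Proposition~\ref{prop:property-cell}: since $S$ has no periodic trajectories, every cell is filled by non-periodic trajectories, so $R$ is homeomorphic to an open disk or an open annulus, all trajectories in $R$ share a common $\omega$-limit set $\Omega$ and $\alpha$-limit set $A$, both contained in $\partial R$, and each connected component of $\partial R$ meets $\Omega \cup A$. Second, I would argue that $\Omega$ reduces to a single characteristic point $p_{\omega}$. Since $S$ is a compact surface with isolated characteristic points and no nontrivial recurrent trajectory, a Poincaré--Bendixson-type classification forces $\Omega$ to be either a fixed point, a periodic orbit, or a graphic (a finite cycle of characteristic points joined by separatrices). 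Periodic orbits are excluded by hypothesis. A graphic would force the trajectories of $R$ to accumulate on the cycle from the $R$-side; using the local models of Corollaries~\ref{cor:qualitative-char-foli-non-deg} and~\ref{cor:qualitative-char-foli-deg}, the incoming and outgoing separatrices at each characteristic point of the graphic would bound a hyperbolic sector on the $R$-side in the sense of Definition~\ref{defn:hyperbolic-sector}, turning the cycle into a right- or left-sided contour, again excluded. Applying the same argument to $-X$ gives $A = \{p_{\alpha}\}$.

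Third, I would apply Proposition~\ref{prop:leavesFiniteLength} to produce the length bounds. Every trajectory $\ell$ in $R$ has finite sub-Riemannian length, as the concatenation of two semi-leaves terminating at $p_{\omega}$ and $p_{\alpha}$; similarly, every separatrix contained in $\partial R$ has finite length, since both of its endpoints are isolated characteristic points by the same classification argument applied to this separatrix as a trajectory. Hence $\partial R$ is a finite graph of total finite length, whose vertices are characteristic points and whose edges are separatrices, and by item~(iv) of Proposition~\ref{prop:property-cell} each of its connected components contains $p_{\omega}$ or $p_{\alpha}$. To finish, for $x,y \in R$ I concatenate the two semi-leaves of their trajectories at $p_{\omega}$; for $x \in \partial R$ I first walk inside $\partial R$ through at most finitely many separatrices to reach $p_{\omega}$ or $p_{\alpha}$, and then enter $R$. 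Combining these pieces yields $d_{S}(x,y) < +\infty$ for all $x,y \in \overline R$.

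The main obstacle is the second step, namely excluding graphic limit sets. The delicate point is to show that accumulation of trajectories of $R$ on a multi-point cycle automatically forces the sided configuration at each vertex of the cycle: one has to control the Poincaré first-return map between transversals to consecutive separatrices and verify that the local picture on the $R$-side is indeed a hyperbolic sector as in Definition~\ref{defn:hyperbolic-sector}. This rests on the $C^{1}$- and $C^{0}$-conjugation results of Section~\ref{sec:local-study}, combined with a careful planar argument handling both non-degenerate saddles and saddle (or saddle-node) configurations that may appear along the graphic.
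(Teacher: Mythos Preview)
Your argument is correct and arrives at the same conclusion, but the paper organizes the key step differently and avoids the Poincar\'e--Bendixson classification altogether. Instead of analysing the $\omega$-limit set $\Omega$ of the trajectories in $R$ and excluding the graphic case, the paper works directly with a boundary component $\Gamma$ of $\partial R$: since $\Gamma$ is a finite cycle of characteristic points and separatrices, the hypothesis ``no sided contour'' forces at least one characteristic point $p\in\Gamma$ to lack a hyperbolic sector toward $R$; by the sector trichotomy for isolated equilibria and the absence of elliptic sectors (Remark~\ref{req:local-picture-for-isolated}), this $p$ has a \emph{parabolic} sector toward $R$, so nearby trajectories in $R$ actually converge to $p$, and then Proposition~\ref{prop:property-cell}(ii) propagates this to all trajectories in $R$. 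From there the proof concludes exactly as you do, via Proposition~\ref{prop:leavesFiniteLength} and walking along separatrices in $\Gamma$. In effect the paper uses the contrapositive of your graphic argument at a single vertex rather than along the whole cycle, which sidesteps precisely the ``main obstacle'' you flag (checking that accumulation on a multi-point cycle yields consistently sided hyperbolic sectors at every vertex). Your route buys a slightly stronger intermediate statement---both $\Omega$ and $A$ are singletons, so every leaf in $R$ has finite total length---at the price of invoking more of the qualitative theory of planar flows; the paper's route is more elementary but extracts only what is needed.
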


\begin{proof} 
Since the surface $S$ is compact and the characteristic points in $\Sigma(S)$ are isolated,  there is a finite number of characteristic points. 
Moreover, there are no periodic trajectories.
This implies that there is a finite number of singular trajectories, hence we can apply Proposition~\ref{prop:property-cell}.

Let $R$ be a cell of the characteristic foliation of $S$, and let $\Gamma$ be one of the connected components of the boundary $\partial R$ (of which there are either one or two, due to Proposition~\ref{prop:property-cell}). 
The curve $\Gamma$ is the union of characteristic points and separatrices.
If all characteristic points have a hyperbolic sector towards $R$ (right-sided or left-sided), then $\Gamma$ would be a sided contour, which is excluded. 
Therefore, there exists a characteristic point $p\in \Gamma$ without a hyperbolic sector towards $R$.
As shown in \cite[Par.~8.18]{Andronov-1973}, around an isolated equilibrium there are only the three kinds of sectors depicted in Figure~\ref{img:local-sectors}.
Since there is no elliptic sector due to Remark~\ref{req:local-picture-for-isolated}, the point $p$ has a parabolic sector towards $R$.

\begin{figure}[]
\begin{center}
\includegraphics[width=0.45\linewidth]{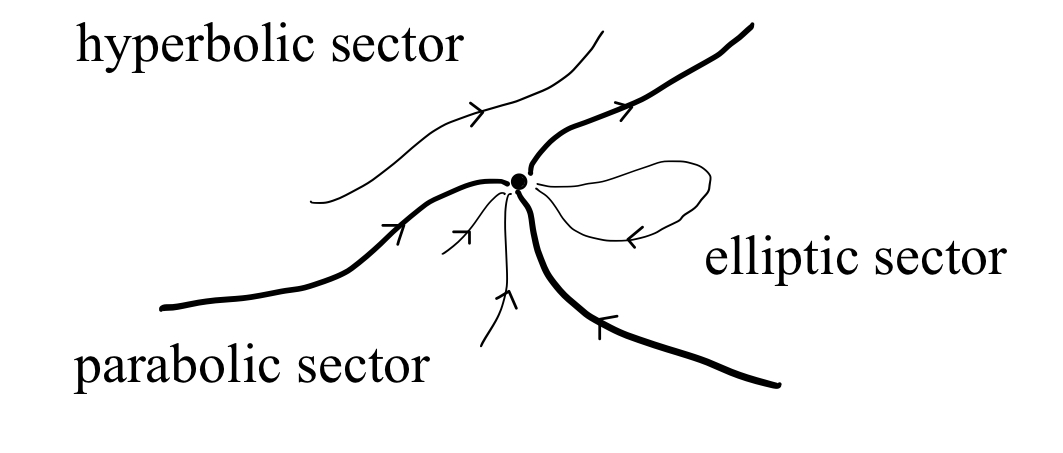}
\caption{\small The  sectors of an isolated equilibrium of a dynamical system.}
\label{img:local-sectors}
\end{center}
\end{figure}

Due to Proposition~\ref{prop:property-cell},  the point $p$ is the $\omega$-limit or the $\alpha$-limit of every trajectory in $R$.
Then, for every point $x\in R$, there exists a semi-leaf  $\ell^{+}_{ X}(x)$ or  $\ell^{+}_{-X}(x)$ starting from $x$ and converging to $p$. 
Due to Proposition~\ref{prop:leavesFiniteLength}, this semi-leaf has finite sub-Riemannian length, hence  $d_{S}(x,p)$ is finite. 

Next, for every point $y \in \Gamma$, note that  \[d_{S}(x,y)\leq d_{S}(x,p)+d_{S}(p, y).\]
We have already proven that $d_{S}(x,p)$ is finite, and the same holds for $d_{S}(p, y)$. Indeed, one can find a horizontal curve of finite length connecting $p$ and $y$  using a concatenation of the separatrices contained in $\Gamma$. 
	
If the boundary of $R$ has a second connected component, then the above argument holds also for the other connected component because it suffices to  repeat the above argument for it. 
Thus, we have shown that
\begin{equation*}
	d_{S}(x,y)<+\infty \qquad \forall ~ x  \in R, ~  y\in \partial R,
\end{equation*}
which implies the statement of the lemma.
\end{proof}

\begin{figure}[]
\begin{center}
\includegraphics[width=0.42\linewidth]{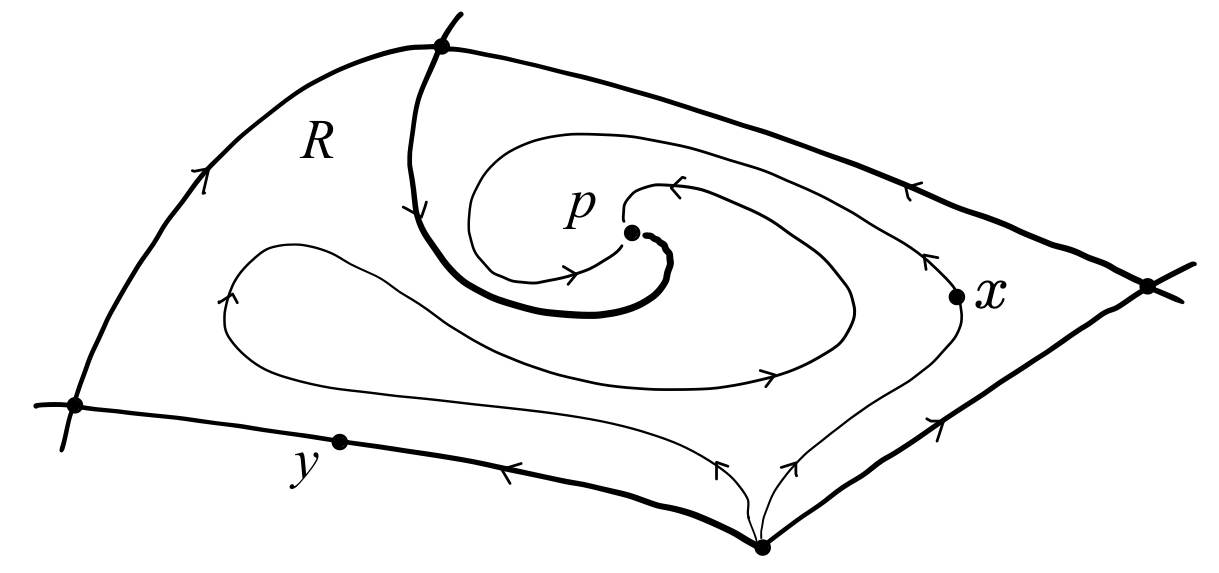} 
 \caption{\label{DimBoundary} \small How to connect the points of a cell with the points in the boundary. }
 \end{center}
 \end{figure}

\begin{lemma} \label{finalConcusion} 
Let $S$ be surface satisfying the hypothesis of Proposition~\ref{prop:dS-finite}.  
Then, for every  $x$ in $S$, there exists an open neighbourhood $U$ of $x$ such that, for all $y$ in $U$,
\[ 
	d_{S}(x,y)< +\infty.
\]
\end{lemma}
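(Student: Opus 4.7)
The plan is to localize around an arbitrary point $x\in S$ and distinguish three cases according to whether $x$ lies in a cell of the characteristic foliation, on a separatrix (but not at a characteristic point), or at an isolated characteristic point. In each case I produce an open neighbourhood $U$ of $x$ in which the finiteness of $d_{S}(x,\cdot)$ follows from Lemma~\ref{LemmaPropertyBondary}, supplemented by Proposition~\ref{prop:leavesFiniteLength} for semi-leaves converging to $x$ and by the fact that a bounded arc of a regular trajectory of a characteristic vector field $X$ has finite sub-Riemannian length.

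If $x$ belongs to a cell $R$, then cells are open in $S$, so some neighbourhood $U$ of $x$ is contained in $R$ and Lemma~\ref{LemmaPropertyBondary} immediately gives $d_{S}(x,y)<+\infty$ for every $y\in U$. If instead $x$ lies on a separatrix $\ell$ but is not a characteristic point, we may shrink $U$ so that $U\subset \ell\cup R_{1}\cup R_{2}$, where $R_{1}$ and $R_{2}$ are the (at most two) cells whose boundary contains $\ell$; this is legitimate because the finitely many characteristic points generate only finitely many separatrices. For $y\in U\cap (R_{i}\cup \partial R_{i})$ the conclusion follows from Lemma~\ref{LemmaPropertyBondary} since $x\in \partial R_{i}$, and for $y\in \ell\cap U$ close to $x$ the arc of $\ell$ joining $x$ and $y$ is a bounded piece of a $C^{1}$ integral curve of $X$ avoiding $\Sigma(S)$, hence has finite sub-Riemannian length.

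The substantive case is when $x$ is a characteristic point. Since $x$ is isolated, Remark~\ref{req:local-picture-for-isolated} identifies the characteristic foliation on a small disk neighbourhood $U$ of $x$ with one of the four pictures in Figure~\ref{img:isolated-char-point}: a saddle, a saddle-node, a node, or a focus. Shrinking $U$ so that it contains no other characteristic point and meets only the separatrices of $x$, the complement $U\mysetminus\{x\}$ decomposes into finitely many separatrix arcs together with finitely many sectors, each of them hyperbolic or parabolic (no elliptic sector appears). Each such sector is contained in a single cell $R$, and every trajectory in the sector has $x$ as an $\omega$- or $\alpha$-limit, hence $x\in \partial R$ by Proposition~\ref{prop:property-cell}(iii). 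Lemma~\ref{LemmaPropertyBondary} then yields $d_{S}(x,y)<+\infty$ for every $y$ in the closure of such a sector; for $y$ on a separatrix of $x$ close to $x$, the required bound is provided by Proposition~\ref{prop:leavesFiniteLength} applied to the semi-leaf converging to $x$ through $y$.

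The main obstacle is the characteristic-point case, and it lies in combining the local topological classification of Remark~\ref{req:local-picture-for-isolated} with the cell-boundary information of Proposition~\ref{prop:property-cell} to realise a neighbourhood of $x$ as a finite union of closures of sectors, each contained in a cell whose boundary contains $x$. Once this structural description is secured, the argument reduces to a cell-by-cell application of Lemma~\ref{LemmaPropertyBondary}.
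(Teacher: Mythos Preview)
Your proof is correct and follows essentially the same three-case strategy as the paper's own argument: cell, separatrix, characteristic point, each reduced to Lemma~\ref{LemmaPropertyBondary}. You are in fact slightly more explicit than the paper in justifying why $x\in\partial R$ at a characteristic point (via Proposition~\ref{prop:property-cell}(iii)) and in invoking Proposition~\ref{prop:leavesFiniteLength} for the separatrix arcs, whereas the paper simply says one can ``connect directly''; the content is the same.
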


\begin{proof} 
Let $x$ be a point of $S$. 
If $x$ does not belong to the union of the singular trajectories, then it is in the interior of a cell $R$. 
Thus, due to Lemma~\ref{LemmaPropertyBondary}, one can choose  $U=R$.
Otherwise, the point $x$ belongs to a separatrix, or it is a characteristic point of $S$. 

Assume that $x$ belongs to a separatrix $\Gamma$. 
Then, there exists a neighbourhood $U$ of $x$ which is divided by $\Gamma$ in two connected components. 
Those two connected components are contained in some cell $R_{1}$ and $R_{2}$, which contain~$\Gamma$ in their boundary. 
For every $y\in U$, then either $y \in R_{i}$,  for $i=1,2$, or  $y \in \Gamma$. 
If  $y \in R_{i}$, then it suffices to apply Lemma~\ref{LemmaPropertyBondary}. Otherwise, if $y \in \Gamma$, the separatrix  $\Gamma$ itself connects  $x$ and $y$.
	
Finally, assume that $x$ is a characteristic point. Due to Corollary~\ref{cor:qualitative-char-foli-non-deg}, Remark~\ref{remq:topolog-conju}, and Corollary~\ref{cor:qualitative-char-foli-deg},  there exists a neighbourhood $U$ of $x$ in which the characteristic foliation of $S$ is topologically conjugate to a saddle, a node or a saddle-node. 
Thus, one can repeat the same argument as before: for every $y\in U$, if $y$ belongs to a cell then one applies Lemma~\ref{LemmaPropertyBondary}; otherwise, if $y$ belongs to a separatrix one can connect $x$ and $y$ directly.
\end{proof}

The proof of Proposition~\ref{prop:dS-finite}  is an immediate corollary of Lemma~\ref{finalConcusion}.

\begin{proof}[Proof of Proposition~\ref{prop:dS-finite} ] The property of having finite distance is an equivalence relation on the points of $S$. Because of Lemma~\ref{finalConcusion}, the equivalence classes are open.
Thus, because $S$ is connected, there is only one class.
\end{proof}

\begin{figure}[h]
 	\centering
	 \includegraphics[width=0.35\linewidth]{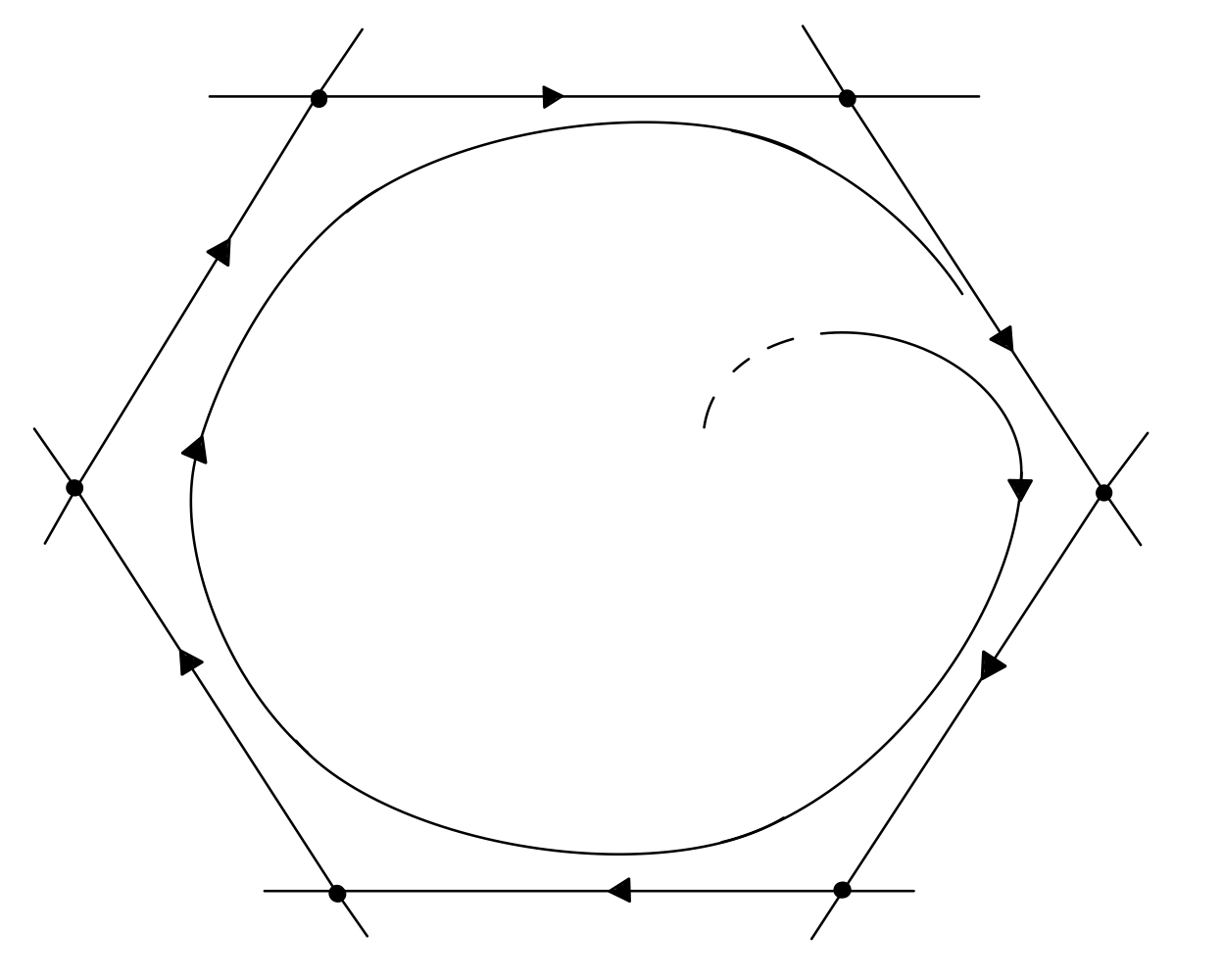}
	\caption{\small An embedded polygon which bounds a right-sided contour}
	 \label{fig:sided-contours}
\end{figure}


\section{Spheres in a tight contact distribution}\label{sect:sphere}

In this section we prove Theorem~\ref{thm:FiniteMetricSph}, i.e.,  in a  tight coorientable contact distribution the topological spheres have finite induced distance. 
This  is  done by showing that the hypothesis of Proposition~\ref{prop:dS-finite}  are satisfied in this setting. \\

An overtwisted disk, precisely defined in Definition~\ref{defn:overtwisted}, is en embedding of a disk with horizontal boundary such that the distribution does not twist along the boundary. 
A contact distribution is called \textit{overtwisted} if it admits a overtwisted disk, and it is called  \textit{tight} if it is non-overtwisted. 

\begin{remark}\label{rmq:no-periodic-in-tight}
Note that if the boundary of a disk is a periodic trajectory of its characteristic foliation, then the disk is overtwisted. 
Indeed, since a periodic trajectory does not contain characteristic points, then the plane distribution never coincides with the tangent space of the disk, thus the distribution can't perform any twists.
 \end{remark}

\begin{lemma} \label{lem:no-periodic-trajectories}
Let $(M,D)$ be a tight contact 3-manifold, and $S$ an embedded surface with the topology of a sphere. 
Then, the characteristic foliation of $S$ does not contain periodic trajectories.
\end{lemma}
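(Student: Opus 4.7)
The plan is to argue by contradiction and exploit the fact that a topological sphere is separated into two disks by any embedded circle, together with the observation already recorded in Remark~\ref{rmq:no-periodic-in-tight}.

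Suppose, for contradiction, that the characteristic foliation of $S$ contains a periodic trajectory $\gamma$. By definition $\gamma$ is a leaf homeomorphic to a circle and, being a leaf of the characteristic foliation, it is a horizontal curve with respect to $D$ (its tangent line at every point equals $T_xS\cap D_x$, which is one-dimensional since $\gamma$ consists of non-characteristic points).

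Since $S$ is homeomorphic to $S^2$, the Jordan--Schoenflies theorem implies that $\gamma$ separates $S$ into two connected components, each of which is an embedded disk with boundary $\gamma$. Pick either of these disks, call it $\Delta$. Then $\Delta$ is an embedded disk in $M$ whose boundary $\partial \Delta=\gamma$ is horizontal. By Remark~\ref{rmq:no-periodic-in-tight}, such a disk is overtwisted: along the periodic trajectory $\gamma$ there are no characteristic points, so $T_x\Delta = T_xS$ is transverse to $D_x$ for every $x\in \gamma$, and hence the contact distribution cannot perform any twist along the boundary.

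The existence of an overtwisted disk contradicts the hypothesis that the contact distribution $D$ is tight, completing the proof. The only conceivable subtlety is ensuring that the two-sided disk $\Delta$ really qualifies as an overtwisted disk in the precise sense of the definition referenced in Remark~\ref{rmq:no-periodic-in-tight}; but this is exactly the content of that remark, since the boundary condition (horizontal boundary along which $D$ does not twist) is automatic once $\partial \Delta$ is a periodic leaf of the characteristic foliation.
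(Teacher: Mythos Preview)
Your proof is correct and follows essentially the same argument as the paper's: assume a periodic trajectory exists, use that it bounds a disk on the sphere (the paper phrases this as ``divides $S$ in two topological half-spheres'' rather than invoking Jordan--Schoenflies explicitly), and then apply Remark~\ref{rmq:no-periodic-in-tight} to conclude the disk is overtwisted, contradicting tightness.
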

\begin{proof}
Assume that the characteristic foliation of $S$ has a periodic trajectory $\ell$. 
Then, because $\ell$ does not have self-intersections, 
the leaf $\ell$ divides $S$ in two topological half-spheres $\Delta_{1}$ and $\Delta_{2}$.
The disks $\Delta_{i}$, for $i=1,2$, are overtwisted, which contradicts the hypothesis that the distribution is tight because Remark~\ref{rmq:no-periodic-in-tight}.
\end{proof}

Now, let us discuss the sided contours.

\begin{lemma} \label{lem:no-sided-contours}
Let $(M, D)$ be a tight contact 3-manifold, and $S \subset M$ an embedded surface with the topology of a sphere. 
Then the characteristic foliation of $S$ does not contain sided contours.
\end{lemma}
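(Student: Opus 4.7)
I plan to derive a contradiction with Lemma~\ref{lem:no-periodic-trajectories} by producing a periodic trajectory in the characteristic foliation of $S$ or in that of a $C^2$-small perturbation of it. Assume toward contradiction that the characteristic foliation of $S$ admits a sided contour $\Gamma = \bigcup_{j=1}^{s}(\{p_j\} \cup \ell_j)$; up to reversing the flow via $-X$, I may assume $\Gamma$ is right-sided. Since $S$ is homeomorphic to a sphere and $\Gamma$ is a simple closed curve, $\Gamma$ separates $S$ into two topological disks $\Delta_1$ and $\Delta_2$; up to relabelling, the right-sided hyperbolic sectors $U_j$ of Definition~\ref{defn:hyperbolic-sector} lie on the side of $\Delta_1$.

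Next, I set up a Poincar\'e first-return map along $\Gamma$ inside $\Delta_1$. For each $j=1,\dots,s$, choose transversal arcs $T_j$ through points $x_j \in \ell_j \cap U_j$ as in Definition~\ref{defn:hyperbolic-sector}. By that definition, the flow of $X$ sends a neighbourhood of $x_j$ in $T_j^r$ into $T_{j+1}^r$ before leaving $U_j$, and the first-intersection map extends continuously by sending $x_j$ to $x_{j+1}$. Composing these maps yields a continuous return map $P : T_1^r \cup \{x_1\} \to T_1^r \cup \{x_1\}$ with $P(x_1)=x_1$ on the boundary. If $P$ admits an interior fixed point $y \in T_1^r$, then the orbit of $y$ is a periodic trajectory contained in $\Delta_1$, contradicting Lemma~\ref{lem:no-periodic-trajectories}.

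Otherwise, by continuity of $P$, the function $P(y)-y$ has constant sign on a one-sided neighbourhood of $x_1$ in $T_1^r$, so trajectories near $\Gamma$ spiral monotonically inward toward $\Gamma$ (attracting case) or outward (repelling case). In either case, I construct a $C^2$-small perturbation $S'$ of $S$, supported in disjoint neighbourhoods of the $p_j$, that flips the sign of $P(y) - y$. Concretely, one deforms $S$ so as to displace the local stable and unstable separatrices of each $p_j$ in the direction of the hyperbolic sector $U_j$; since $P = P_s \circ \cdots \circ P_1$ is a composition of independently controllable factors, such a perturbation can be chosen to move $P(x_1)$ strictly into $T_1^r$ in the opposite monotonicity direction. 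The intermediate value theorem then yields an interior fixed point of the perturbed return map, hence a periodic trajectory in the characteristic foliation of $S'$. Since $S'$ remains a $C^2$-embedded topological sphere in the same tight coorientable contact manifold, Lemma~\ref{lem:no-periodic-trajectories} applied to $S'$ yields the desired contradiction.

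The main technical obstacle is making the perturbation argument rigorous: one must verify that the characteristic vector field depends continuously enough on $C^2$-perturbations of $S$, and that the local perturbation at each $p_j$ can be arranged to move the separatrices as required without introducing new characteristic points or destroying the hyperbolic-sector structure (which persists by structural stability at non-degenerate saddles). This should follow from working in local coordinates where the characteristic vector field has a normal form dictated by the classification in Corollaries~\ref{cor:qualitative-char-foli-non-deg} and~\ref{cor:qualitative-char-foli-deg}, combined with standard results on the robustness of hyperbolic sectors under $C^1$-perturbations of the generating vector field.
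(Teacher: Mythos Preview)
Your overall strategy---perturb the sphere to create a periodic trajectory and contradict Lemma~\ref{lem:no-periodic-trajectories}---is exactly the paper's, and your Poincar\'e return-map framing is a reasonable way to organise it (though the case distinction on whether $P$ already has an interior fixed point is unnecessary: the paper simply builds the periodic trajectory directly). The substantive difference is where and how the perturbation is performed.

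The paper perturbs $S$ not near the characteristic points $p_j$ but near chosen \emph{regular} points $x_1\in\ell_j$, $x_2\in\ell_{j+1}$ on the adjacent separatrices. There the characteristic foliation is a flow-box, and by a theorem of Giroux one can extend the rectifying diffeomorphism to a local contactomorphism onto a neighbourhood in the Heisenberg vertical plane $\{x=0\}$, whose characteristic foliation consists of parallel lines. In that model an explicit small bump, built from the area-lift formula for horizontal curves, diverts the separatrix $\ell_j$ onto any prescribed nearby leaf; doing this at both $x_1$ and $x_2$ connects $\ell_j$ to $\ell_{j+1}$ and bypasses $p_j$ entirely. Repeating for every vertex yields a sphere with a periodic trajectory. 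By contrast, your perturbation is supported near the singular points $p_j$ themselves, and you rely on ``structural stability at non-degenerate saddles'' to argue it can be done without creating new characteristic points or destroying the hyperbolic sector. This is where the gap lies: the vertices of a sided contour need not be non-degenerate (nothing in the lemma's hypotheses forces it), and even in the non-degenerate case a perturbation supported where $T_pS=D_p$ is precisely where new tangencies are most easily created. The paper's choice to work at regular points sidesteps all of this: the local model is trivial, the bump is explicit, and no singularity analysis is needed. If you relocate your perturbation to regular points on the separatrices and invoke the Giroux rectification, your argument becomes essentially the paper's.
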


\begin{proof}
Assume that the characteristic foliation presents a sided contour $\Gamma$. 
Its complementary $S\mysetminus \Gamma$ has two connected components, which are topologically half-spheres. Let us call $\Delta$ the component on the same side of $\Gamma$, i.e., if $\Gamma$ is right-sided (resp.~left-sided) then $\Delta$ is on the right (resp.~left). 
For instance, if $\Gamma$ is right-sided, then the  characteristic foliation of $\Delta$ looks like that of the polygon in Figure~\ref{fig:sided-contours}. 

Let $p$ be one of the vertices of $\Delta$, let  $\ell_{1}$ and $\ell_{2}$ be the separatrices adjacent to~$p$, and let  $U$ be a neighbourhood  of $p$ such that we are in the condition of Definition~\ref{defn:hyperbolic-sector}. 
Let us fix two points $x_{i}\in \ell_{i}\cap U$, for $i=1,2$.  Due to  the definition of hyperbolic sector,  in a neighbourhood of $x_{1}$ the leaves pass arbitrarily close to $x_{2}$. 

We are going to give the idea  of how to perturb the surface  near $x_{1}$ and $x_{2}$ so that the separatrices $\ell_{1}$ and $\ell_{2}$ are diverted to the same nearby leaf, therefore bypassing $p$.
In other words, via a $C^{\infty}$-small perturbation of $S$ supported in a neighbourhood of $x_{1}$ and $x_{2}$, we obtain a sphere which contains a sided contour with one less vertex, see Figure~\ref{img:perturbation}. 
By  repeating such perturbation for every vertex, one obtains a new surface with a periodic trajectory in its  characteristic  foliation, which is excluded due to Lemma~\ref{lem:no-periodic-trajectories}. 
\begin{figure}[]
\begin{center}
\includegraphics[width=0.35\linewidth]{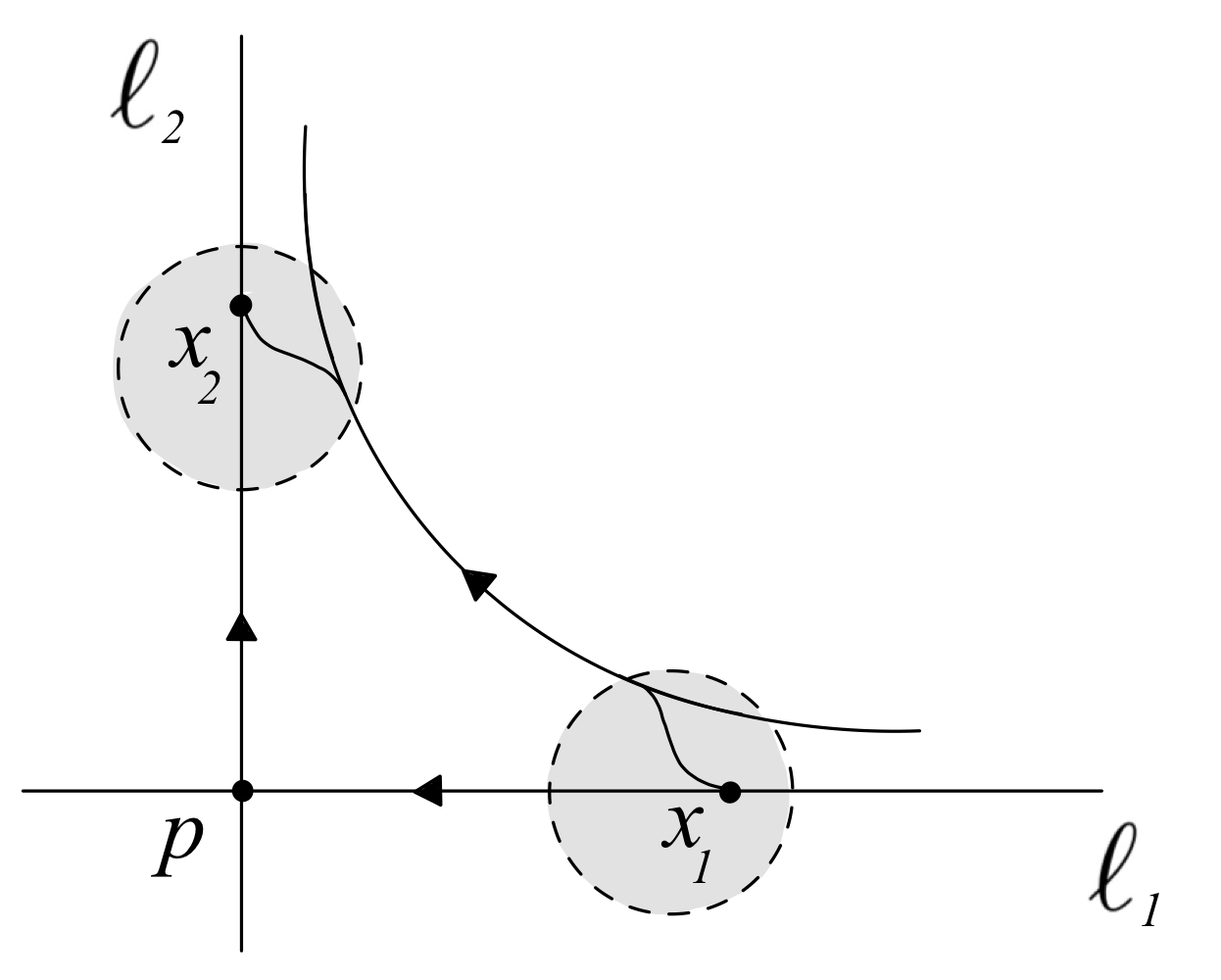}
\caption{\label{img:perturbation}\small The characteristic foliation of the perturbed surface.}
\end{center}
\end{figure}

Consider  the Heisenberg distribution $(\R^{3}, \ker(dz+\frac{1}{2}(y dx -x dy ))$. Let $\mathcal{P}$ be the vertical plane $\mathcal{P}=\{x=0\}$, and $q$ a point in $\mathcal{P}$ contained in the $y$-axis. 
As one can see in Example~\ref{exmp:planes}, the characteristic foliation of $\mathcal{P}$ is made up of parallel horizontal lines. 

Locally, it is possible to rectify the surface $S$ into the plane $\mathcal{P}$ using a contactomorphism of the  respective ambient spaces, as explained in the following lines.
Due to the rectification theorem of dynamical systems, the characteristic foliation of $S$ in a neighbourhood of $x_{1}$ is diffeomorphic to that of a neighbourhood of $q$ in $\mathcal{P}$.
A generalisation of a theorem of Giroux \cite[Thm.~2.5.23]{geiges2008introduction} implies that the $C^{1}$-conjugation between the characteristic foliations of the two surfaces can be extended, in a smaller neighbourhood, to  a contactomorphism. 
Precisely, there exists a contactomorphism $\psi$ from a neighbourhood $V\subset M$ of $x_{1}$ to a neighbourhood of $q$ in $\R^{3}$, with $\psi(S)\subset\mathcal{P}$.

For what it has been said above, the image  of $\ell_{1}$ by $\psi$ is contained in the $y$-axis. 
By creating a small bump 
 in $\mathcal{P}$ after the point $q$, we will be able to divert the leaf going through $q$ to any other parallel line.
Precisely, for any curve $ \gamma(t)=(x(t),y(t))$, defining
 \[ 
 	z(t) =  \frac{1}{2}\int_{t_{1}}^{t} x(s)y'(s) - y(s)x'(s) ds \qquad \forall ~ t\in [t_{1},t_{2}],
\]  
we obtain a horizontal curve $(x(t),y(t),z(t))$.
Now, let $\gamma$ be a smooth curve which joins smoothly to the $y$-axis at its end points $\gamma(t_{1})=q$ and $\gamma(t_{2})$, and let $\Omega$ be the set between $\gamma$ and the $y$-axis.
One can verify that $z(t_{2})= \area\big(\Omega\big)$, where the area is a signed area.
By choosing an appropriate curve $\gamma$, we can connect the $y$-axis from $q$ to any other parallel line in $\mathcal{P}$ via a horizontal curve (Figure~\ref{fig:hyperbolic-perturbation}).
Next, by creating a small bump in~$\mathcal{P}$  in order to include this horizontal curve one has successfully diverted the leaf.
This procedure can be done $C^{\infty}$-small, provided one wants to connect to parallel lines sufficiently close to the $y$-axis. 
Thus, one can make sure that no new characteristic points are created. 
 Finally, this perturbation has to be transposed to a perturbation of $S$ using $\psi$.
 \begin{figure}[] \begin{center}
	\includegraphics[width=0.35\linewidth]{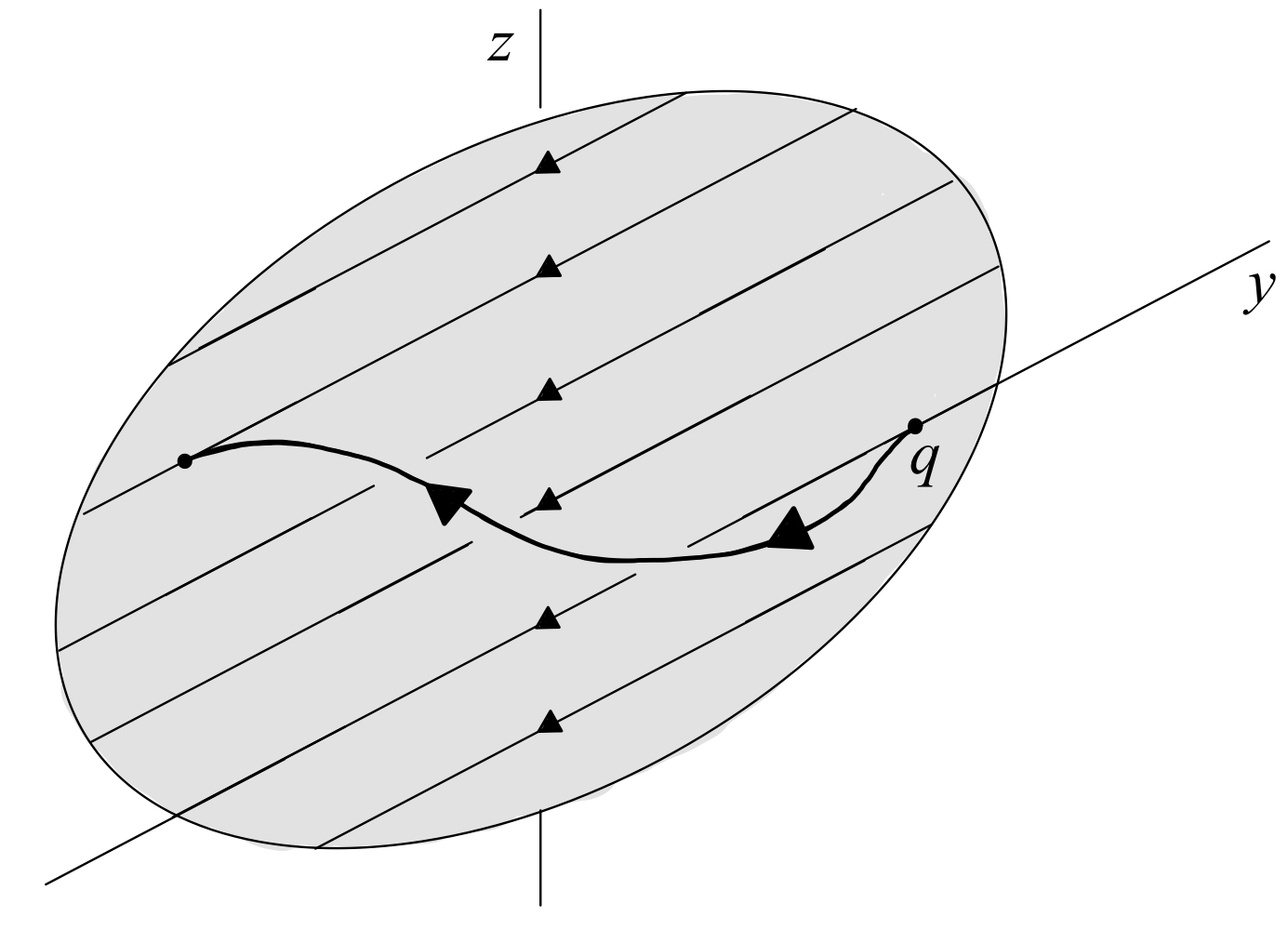} ~
	\includegraphics[width=0.35\linewidth]{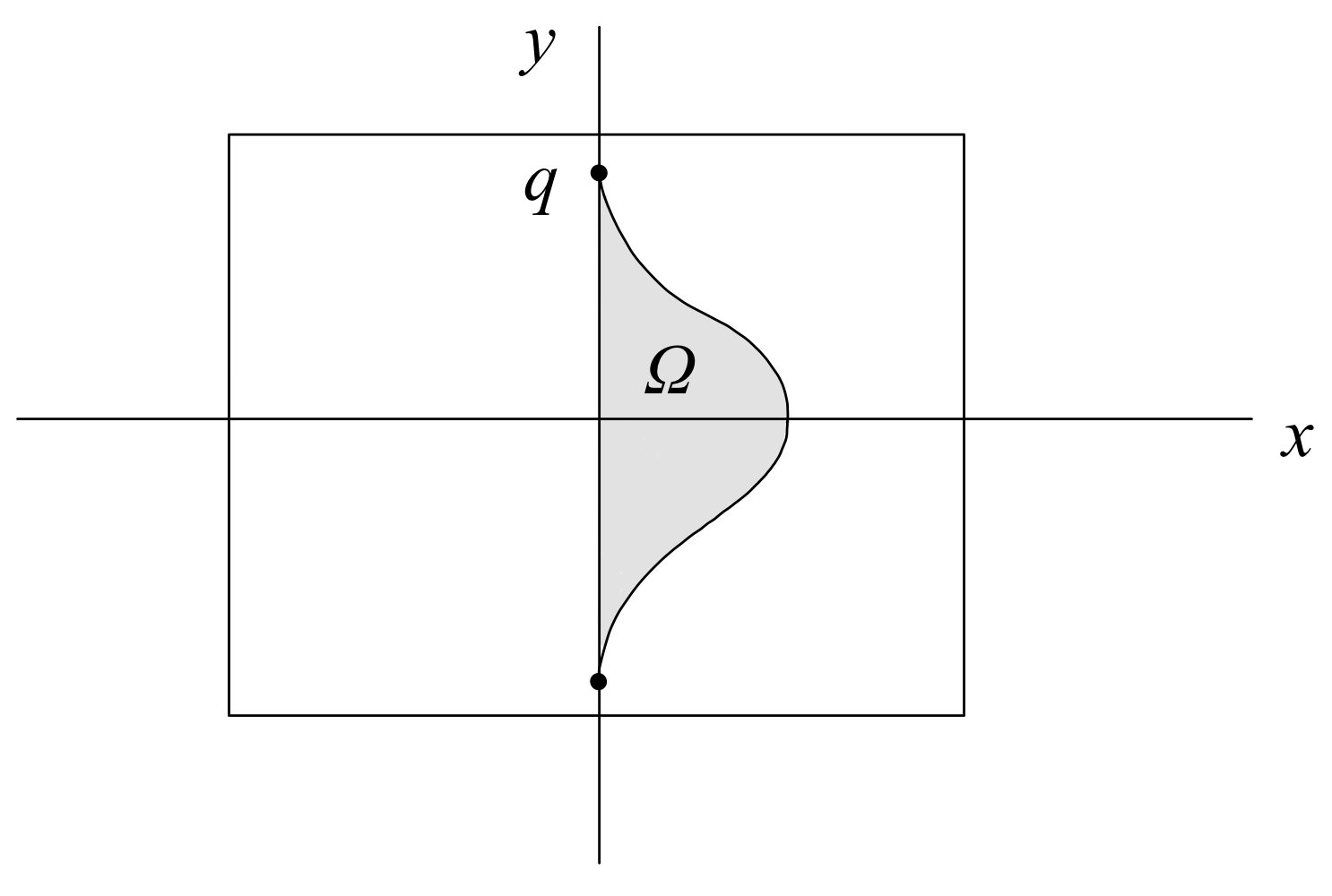}
	\caption{\small The lift to an horizontal curve connecting different leaves.}
	\label{fig:hyperbolic-perturbation}
\end{center}\end{figure}
 
The same argument has to be repeated \textit{mutatis mutandis} in a neighbourhood of $x_{2}$, ensuring that one connects $x_{2}$ exactly to the leaf coming from~$x_{1}$.
This is possible due to the continuity property of a hyperbolic sector, which ensures that the leaf coming from $x_{1}$ intersects the domain of the rectifying contactomorphism of~$x_{2}$.
\end{proof}

We can finally prove Theorem~\ref{thm:FiniteMetricSph}.  

\begin{proof}[Proof of Theorem~\ref{thm:FiniteMetricSph}]
The surface $S$ admits a global characteristic vector field, due to Lemma~\ref{lem:existance-cvf}.
Next, a surface with the topology of a sphere doesn't allow flows with nontrivial recurrent trajectories, see \cite[Lem.~2.4]{ABZintrod}. 
Indeed,  from a nontrivial recurrent trajectory one can construct a closed curve transversal to the flow which does not separate the surface, which contradicts the Jordan curve theorem. 

 Then, Lemma~\ref{lem:no-periodic-trajectories} and  Lemma~\ref{lem:no-sided-contours} imply that the flow of a characteristic vector field of $S$ does not contain periodic trajectories and sided contours, thus the hypothesis of Proposition~\ref{prop:dS-finite}  are satisfied. Consequently, $d_{S}$ is finite.
\end{proof}


\section{Examples of surfaces in the Heisenberg  structure}\label{sect:examples}

In this section we present some examples of surfaces in the Heisenberg sub-Riemannian structure, that is the contact, tight, sub-Riemannian structure of $\R^{3}$ for which $(X_{1}, X_{2})$ is a global orthonormal frame, where 
\[
	X_{1}=\partial_{x}-y/2~\partial_{z}, \qquad X_{2}=\partial_{y}+x/2~\partial_{z}.
\]
If $(u,v)\mapsto (x(u,v),y(u,v),z(u,v))$ is a parametrisation of a surface $S$, then the characteristic vector field $X$ in coordinates $u,v$ becomes 
\begin{equation}\label{eq:XS-in-parametrisation}
	 X = 
	-\left(z_{v}+x_{v}\frac{y}{2} -y_{v}\frac{x}{2}\right) \frac{\partial }{\partial u} 
		+ \left( z_{u}+x_{u} \frac{y}{2}-y_{u}\frac{x}{2} \right) \frac{\partial }{\partial v},
\end{equation}
where have used the subscripts to denote a partial derivative. 
When the surface is the graph of a function $S=\{z=h(x,y)\}$, then in the graph coordinates
\[
 X= \left(  \frac{x}{2}-\partial_{y}h\right) 
			\frac{\partial}{\partial x} 
		+\left(\partial_{x}h +\frac{y}{2} \right)
			\frac{\partial}{\partial y},
\]
and, at a characteristic point $p=(x,y,z)$, the metric coefficient $\widehat K_{p}$ is computed by
\[
	\widehat K_{p}=-3/4+  \partial^{2}_{xx}h(x,y) ~\partial^{2}_{yy}h(x,y)-\partial^{2}_{xy}h(x,y)~ \partial^{2}_{yx}h(x,y).
\]


\subsection{Planes.}\label{exmp:planes}
Let us consider affine planes in Heisenberg. Thanks to the left-invariance, it is not restrictive to consider a plane $\mathcal{P}$ going throughout the origin. Thus,
\[
	\mathcal{P}=\{ (x,y,z)\in\R^{3} ~|~ax+by+cz=0\} \qquad \mbox{with } (a,b,c) \neq (0,0,0).
\] 

If $c= 0$, i.e., the plane is vertical, then $\mathcal{P}$ does not contain characteristic points. 
Every characteristic vector field is parallel to the vector $(b,-a,~0)$, therefore the characteristic foliation of $\mathcal{P}$ consists of lines that are parallel to the $xy$-plane. 
This implies that points with different $z$-coordinate are not at finite distance from each other, see Figure~\ref{Planes} (left). 
\medskip

Otherwise, if $c\neq0$, then $\mathcal{P}$ has exactly one characteristic point  $p=(-2b/c, 2a/c, 0)$. 
One has that 
\[
	\widehat K_{p} = -\frac{3}{4}.
\]
Thus, because of formula~(\ref{eq:eigenvalues-by-K-hat}), there is one eigenvalue of multiplicity two. 
Due to Corollary~\ref{cor:qualitative-char-foli-non-deg}, the characteristic foliation of $\mathcal{P}$ has a node  at $p$.
An explicit computation of $X_{S}$ shows that 
\[
	X_{S}(q) = \frac{q-p}{2} \qquad \forall\,q\in\mathcal{P},
\]
which shows that the characteristic foliation of $\mathcal{P}$ is composed of Euclidean half-lines radiating out of~$p$.
The metric $d_{\mathcal{P}}$ induced by the Heisenberg group on $\mathcal{P}$ satisfies the following relation: for all $q,q'\in\mathcal{P}$, one has 
\[
	d_{\mathcal{P}}\big(q,q'\big) = 
		\begin{cases} 
		 |(x,y)-(x',y')|_{\R^{2}},  & \mbox{if}~ (q-p) \sslash  (q'-p) \\
		d_{\mathcal{P}}(q,p) + d_{\mathcal{P}}(q',p), &  \mbox{otherwise,}  \end{cases} 
\]
where we have written $q=(x,y,z)$ and $q'=(x',y',z')$. This distance is sometimes called British Rail metric. See Figure~\ref{Planes} (right).

\begin{figure}[] \begin{center}
	\includegraphics[width=0.25\textwidth]{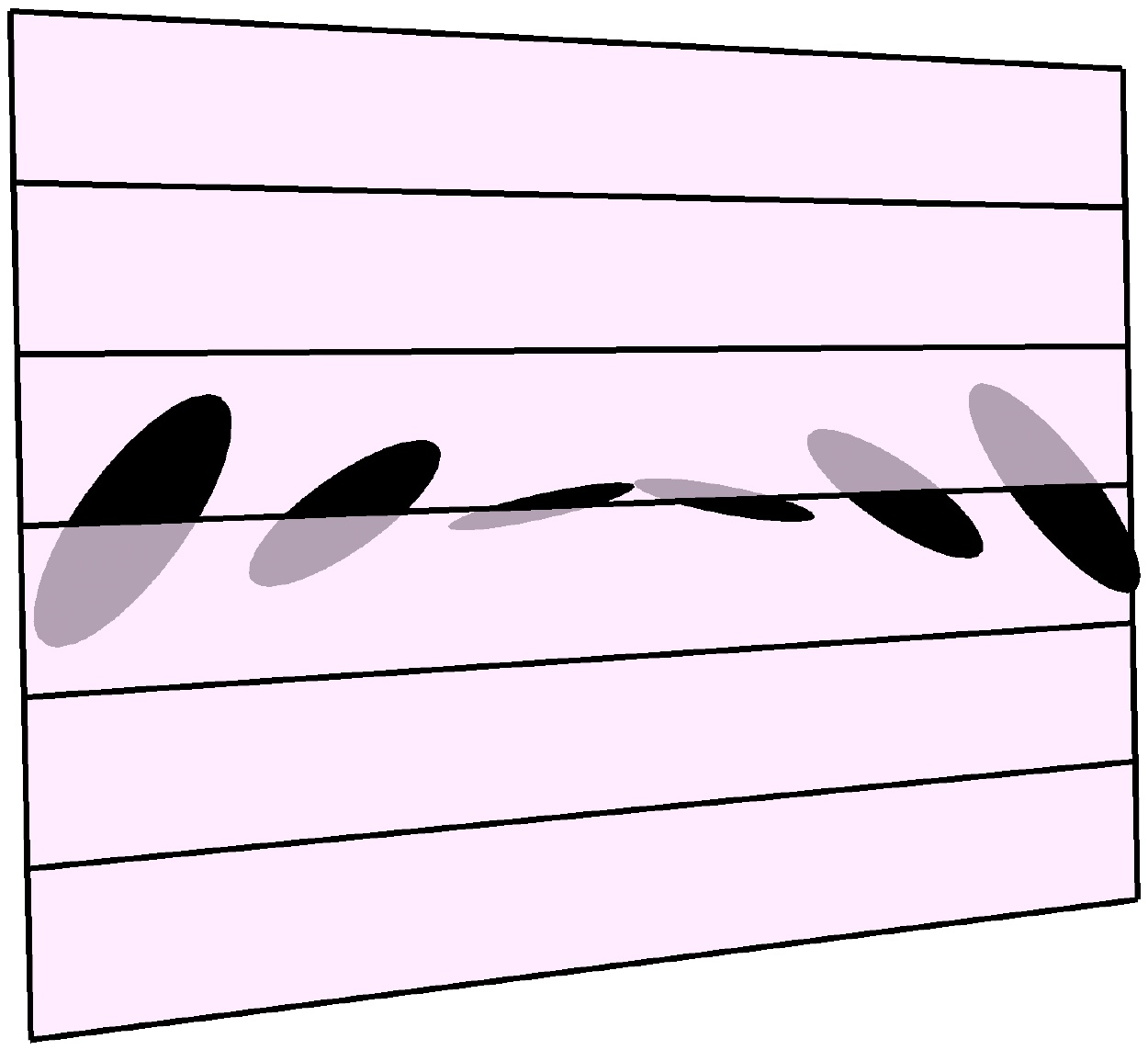} \hspace{30pt}
	\includegraphics[width=0.35\textwidth]{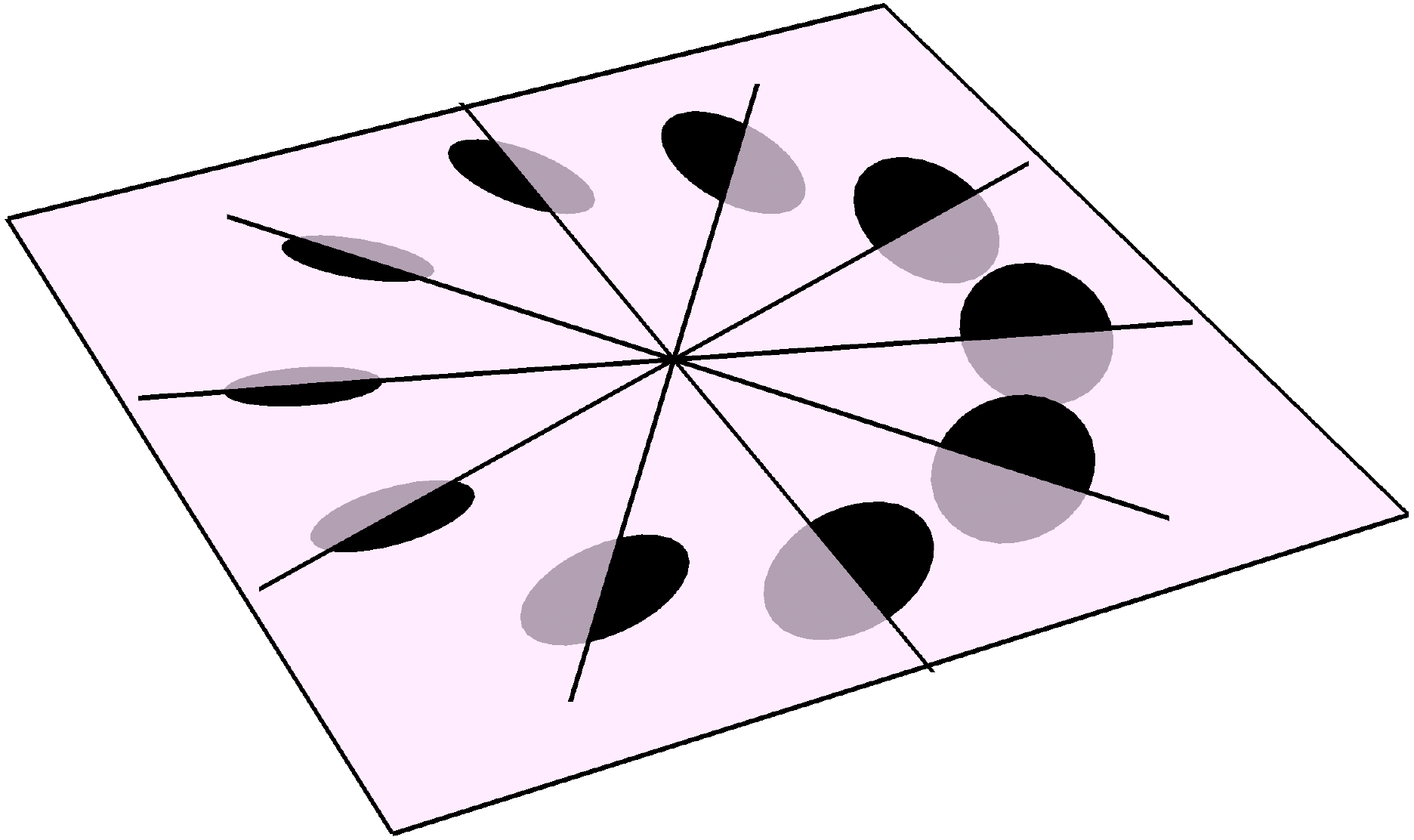}
	\caption{ \label{Planes}\small The qualitative picture of the characteristic foliation of a vertical plane (left), and of a non-vertical plane (right).}
\end{center}\end{figure}


\subsection{Ellipsoids} 
	 Fix $a,b,c>0$, and consider the surface $\mathcal{E}=\mathcal{E}_{a,b,c}$ defined by 
\[
	\mathcal{E}_{a,b,c}=\bigg\{ (x,y,z)\in \R^{3} ~\Big| ~ \frac{x^{2}}{a^{2}}+\frac{y^{2}}{b^{2}}+\frac{z^{2}}{c^{2}}-1=0\bigg\}.
\] 
This surface has exactly two characteristic points $p_{1}=(0,0,c)$ and $p_{2}=(0,0,-c)$, respectively at the North  and the South pole. 
For both points, one has
\[
	\widehat K_{p_{i}} = -\frac{3}{4} +\frac{c^{2}}{a^{2}b^{2}}, \qquad i=1,2.
\]
Because of Corollary~\ref{cor:qualitative-char-foli-non-deg}, the characteristic foliation of $\mathcal{E}$ spirals around the two poles,   as in Figure~\ref{img:sphere}. 
Due to Proposition~\ref{prop:leavesFiniteLength}, the spirals converging to the poles have finite sub-Riemannian length, thus the length distance $d_{\mathcal{\mathcal{S}}}$ is finite. 
Indeed, $d_{\mathcal{S}}$ is realised by the length of the curves joining the points with either the North, or the South pole. 
Here, the finiteness of $d_{S}$ is also a particular case of Theorem~\ref{thm:FiniteMetricSph}.


\subsection{Symmetric paraboloids}

Let  $a\in\R $, and consider the paraboloid $\mathcal{P}_{a}$ with
\[
	\mathcal{P}_{a}=\big\{ (x,y,z)\in\R^{3} ~|~z=a\big(x^{2}+y^{2}\big)\big\}.
\] 
The origin $p$ is the unique characteristic point of $\mathcal{P}_{a}$. Note that 
\[
	\widehat K_{p}=-\frac{3}{4}+4a^{4},
\]
therefore the characteristic foliation  is a focus. 


\subsection{Horizontal torus}\label{exmp:horizontal-torus}
Fix $R>r>0$, and consider the torus   parametrised by
\[
	\Phi(u,v)=\big( (R+r\cos u)\cos v, (R+r \cos u)\sin v, r\sin u \big). 
\]
This is the torus obtained by revolving a circle of radius $r>0$ in the $xz$-plane around a circle of radius $R>r$ surrounding the $z$-axis. 
Using formula~(\ref{eq:XS-in-parametrisation}), a  characteristic vector field $X$ in the coordinates $(u,v)$ is \begin{equation} \label{eq:XS-horiz-torus}
	X=	\frac{\left(R + r \cos(u) \right)^{2}}{2} \frac{\partial }{\partial u} - \frac{ r \cos(u) }{2} \frac{\partial }{\partial v}.
\end{equation}
  It is immediate to see that the characteristic set  is empty. 
Thus, no point can be a limit point of any leaves of the characteristic foliation; due to Remark~\ref{rmq:finite-length-implies-convergence}, this implies that  the length distance is infinite.

\begin{figure}[]
\begin{center}
	\includegraphics[width=0.45\textwidth]{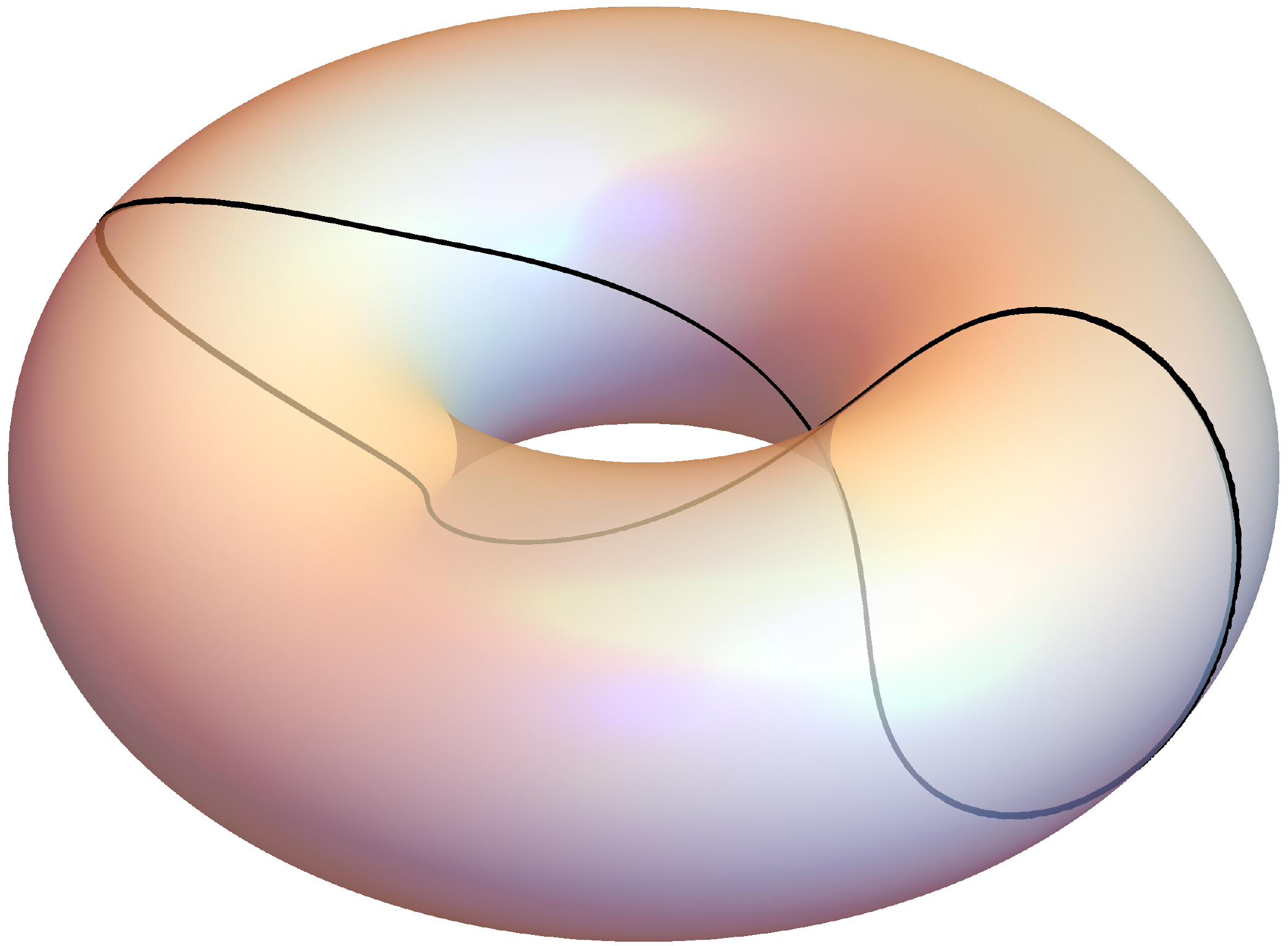} ~ ~ 
	\includegraphics[width=0.45\textwidth]{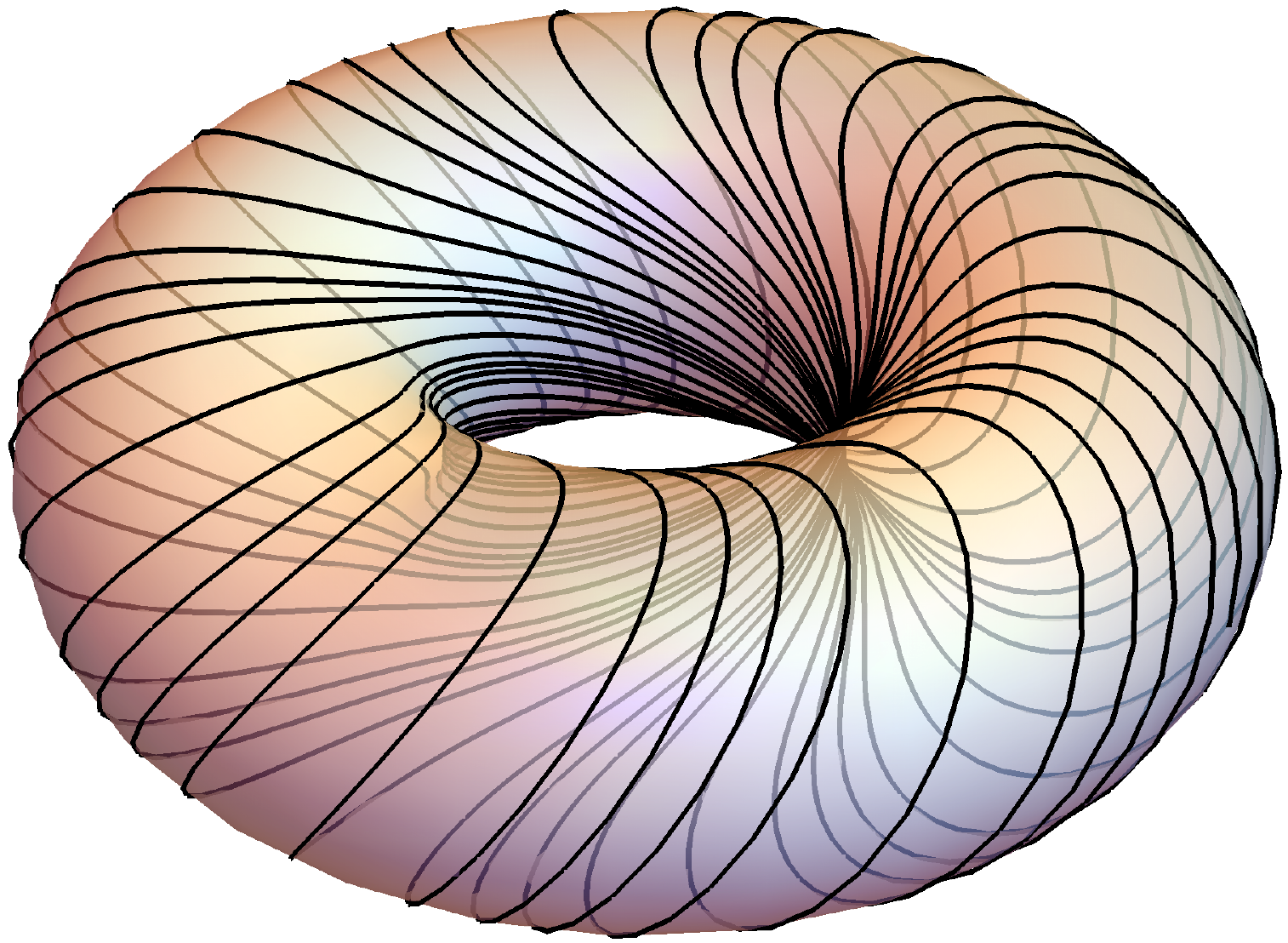}
	\caption{ \label{horTor} \small A leaf of the characteristic foliation of two Horizontal tori. On the left-hand side the leaf is periodic, and on the right-hand side there is a portion of an everywhere dense leaf .  }
\end{center}\end{figure}

\begin{lemma}	The characteristic foliation of a horizontal torus is filled either with   periodic trajectories, or  with everywhere dense trajectories.
	\end{lemma}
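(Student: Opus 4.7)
The key observation is that the first component of the characteristic vector field in \eqref{eq:XS-horiz-torus} is strictly positive:
\[
\dot u = \frac{(R+r\cos u)^{2}}{2} > 0,
\]
because $R>r>0$ implies $R+r\cos u > 0$ everywhere. Therefore $u$ is a strictly monotone function of time along every trajectory, and one may reparametrise each trajectory by $u$. Along a trajectory one has
\[
\frac{dv}{du} = -\,\frac{r\cos u}{(R+r\cos u)^{2}},
\]
a smooth function depending only on $u$. My plan is to exploit this to reduce the problem to the classical dichotomy for rotations of the circle.

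Concretely, I would take the meridian circle $\Sigma = \{u=0\}$ as a global Poincar\'e section of the flow on the torus (global because $\dot u>0$ forces every trajectory to cross $\Sigma$ infinitely often, transversally). Integrating $dv/du$ over one full turn in $u$ yields the first-return map
\[
P : \Sigma \to \Sigma, \qquad P(v_{0}) = v_{0} + \Delta v \pmod{2\pi},
\qquad \Delta v = -\int_{0}^{2\pi} \frac{r\cos s}{(R+r\cos s)^{2}}\,ds,
\]
which is a rigid rotation of the circle because $\Delta v$ is independent of $v_{0}$. The classical theory of rotations of $S^{1}$ then gives the dichotomy: if $\Delta v/(2\pi)\in\mathbb Q$ every orbit of $P$ is periodic and consequently every trajectory of $X$ on the torus is closed; if $\Delta v/(2\pi)\notin\mathbb Q$ every orbit of $P$ is dense in $\Sigma$, and since each trajectory of the flow is a continuous curve sweeping through all the successive $P$-iterates of its starting point on $\Sigma$, it is dense in the whole torus.

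Putting these two pieces together gives the statement: exactly one of the two cases occurs, uniformly for all leaves, according to the rationality or irrationality of $\Delta v/(2\pi)$. No real obstacle is foreseen; the only point deserving a line of care is the passage from density of the orbit of $P$ on the meridian to density of the corresponding trajectory in the torus, which follows from the fact that $u\mapsto v(u)$ is continuous and the flow sweeps $u$ over all of $\mathbb{R}/2\pi\mathbb{Z}$ between consecutive returns to $\Sigma$.
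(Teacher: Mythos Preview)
Your proposal is correct and follows essentially the same approach as the paper: both exploit that $\dot u>0$ and that the $v$-component of the dynamics is independent of $v$ (the paper phrases this as rotational invariance around the $z$-axis, you as $dv/du$ depending only on $u$) to reduce the problem to a rigid circle rotation, then invoke the rational/irrational dichotomy. Your Poincar\'e-section formulation is a touch more explicit, but the argument is the same; your $\Delta v$ coincides with the paper's $\alpha_{r,R}$.
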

	\begin{proof}
Using expression~(\ref{eq:XS-horiz-torus}), in the coordinates $u,v$  the trajectories of $X$ satisfy
	\begin{equation}\label{eq:char-foliat-horiz-torus}
	\left\{
	\begin{array}{l} \dot u = \left(R + r \cos(u) \right)^{2}/2 \\ \dot v = -  r \cos(u) /2 .\end{array}
	\right.
	\end{equation}
Because the Heisenberg distribution and the horizontal torus are invariant under rotations around the $z$-axis, the same applies to the characteristic  foliation.
Thus, the solutions of (\ref{eq:char-foliat-horiz-torus}) are  $v$-translations of the solution $\gamma_{0}(t)=(u(t),v(t))$ with initial condition $\gamma_{0}(0)=(0,0)$.

Note that $(r+R)^{2}/2 \geq \dot u (t)\geq(R-r)^{2}/2 $. Thus, there exists a time $t_{0}$ in which the trajectory $\gamma_{0}(t)$,  satisfies $u(t_{0})=2\pi$. 
Define $\alpha_{r,R}=v(t_{0})$.
If $\alpha_{r,R}/(2\pi)=m/n$ is rational, then $\gamma_{0}(nt_{0})=0\pmod {2\pi}$. This shows that $\gamma_{0}(t)$ is periodic, as every other trajectory.
On the other hand, if $\alpha_{r,R}/(2\pi)$ is irrational, then a classical argument shows that  $\gamma(t)$ is dense in the torus, see  for instance \cite[E.g~.2.3.1]{ABZintrod}.
\end{proof}

See Figure~\ref{horTor} for a picture of a leaf in these two cases. 


\subsection{Vertical torus}\label{exmp:vertical-torus}
Fix $R>r>0$, and consider the torus   $\mathcal{T}=\mathcal{T}_{r,R}$  parametrised by
\[
	\Phi(u,v)=\big(r \sin u, (R+r\cos u)\cos v , (R+r\cos u)\sin v \big).
\]
This is the torus obtained by turning a circle of radius $r$ in the $xy$-plane around a circle of radius $R$ surrounding the $x$-axis.
Due to formula~(\ref{eq:XS-in-parametrisation}), a  characteristic vector field $X$ in coordinates $u,v$ is
\begin{align*}
	X =& (R+r\cos u)\Big(\cos v + \frac{r}{2} \sin v \sin u\Big) \frac{\partial}{\partial u} \\
		&  + \frac{r}{2}\Big( 2 \sin u \sin v - R \cos u \cos v - r \cos v\Big)\frac{\partial}{\partial v}.
\end{align*}
The characteristic points are critical points of the vector field $X$.
If $\cos v = \sin u =0$, then $(u,v)$ corresponds to a solution; this gives 4 characteristic points 
\[
	F_{\pm} = \big(0,0,\pm(R+r) \big), \qquad V_{\pm} =  \big(0,0,\pm(R-r) \big).
\]
The other critical points of $X$ occur if and only if 
	\begin{equation}\label{eq:additional-c-p}
		\tan v =-\frac{2}{r\sin u}, \qquad \cos u = -\frac{4+r^{2}}{rR}.
	\end{equation}
System \eqref{eq:additional-c-p} has solutions if and only if $R> 4$ and $|2r-R|\leq \sqrt{R^{2}-16}$, in which case we have 4 additional characteristic points $S_{i}(r,R)$, for $i=1,2,3,4$.
 Now, the metric coefficient at the characteristic points $F_{\pm}$ and $V_{\pm}$ is
 \begin{align*}
 	\widehat K_{F_{\pm}} &= -\frac{3}{4}+\frac{1}{r(R+r)}, \\
	\widehat K_{V_{\pm}} & =-\frac{3}{4}-\frac{1}{r(R-r)}.
 \end{align*}
 Note that $\widehat K_{F_{\pm}}>-3/4$, thus, due to Corollary~\ref{cor:qualitative-char-foli-non-deg}, $F_{\pm}$ is a focus for all value of $r$ and $R$. 
On the other hand, $\widehat K_{V_{\pm}}$ can attain any value between $-\infty$ and $-3/4$; precisely:
\begin{itemize}[-]
\item if $R< 4$ or $|2r-R|> \sqrt{R^{2}-16}$,  then $\widehat K_{V_{\pm}}<-1$ and $V_{\pm}$ are saddles.
\item if $|2r-R|= \sqrt{R^{2}-16}$, then $\widehat K_{V_{\pm}}=-1$ and $V_{\pm}$ is a degenerate characteristic point; due to the Poincaré Index theorem, the points $V_{\pm}$ are  saddles.
\item if $|2r-R|< \sqrt{R^{2}-16}$, then $-1<\widehat K_{V_{\pm}}<-3/4$ and $V_{\pm}$ are nodes.
\end{itemize}
The values for which $|2r-R|= \sqrt{R^{2}-16}$ are a bifurcation of the dynamical system $X$, because the number of characteristic point changes from 4 to 8. 
The  characteristic points $S_{i}$ which appears at this bifurcation are saddles, due to the Poincaré Index theorem. 
The bifurcation which takes place is the one presented in \cite[E.g.~4.2.6]{perko2012differential}.
	
\begin{figure}[]
\begin{center}
	\includegraphics[align=c, width=0.45\textwidth]{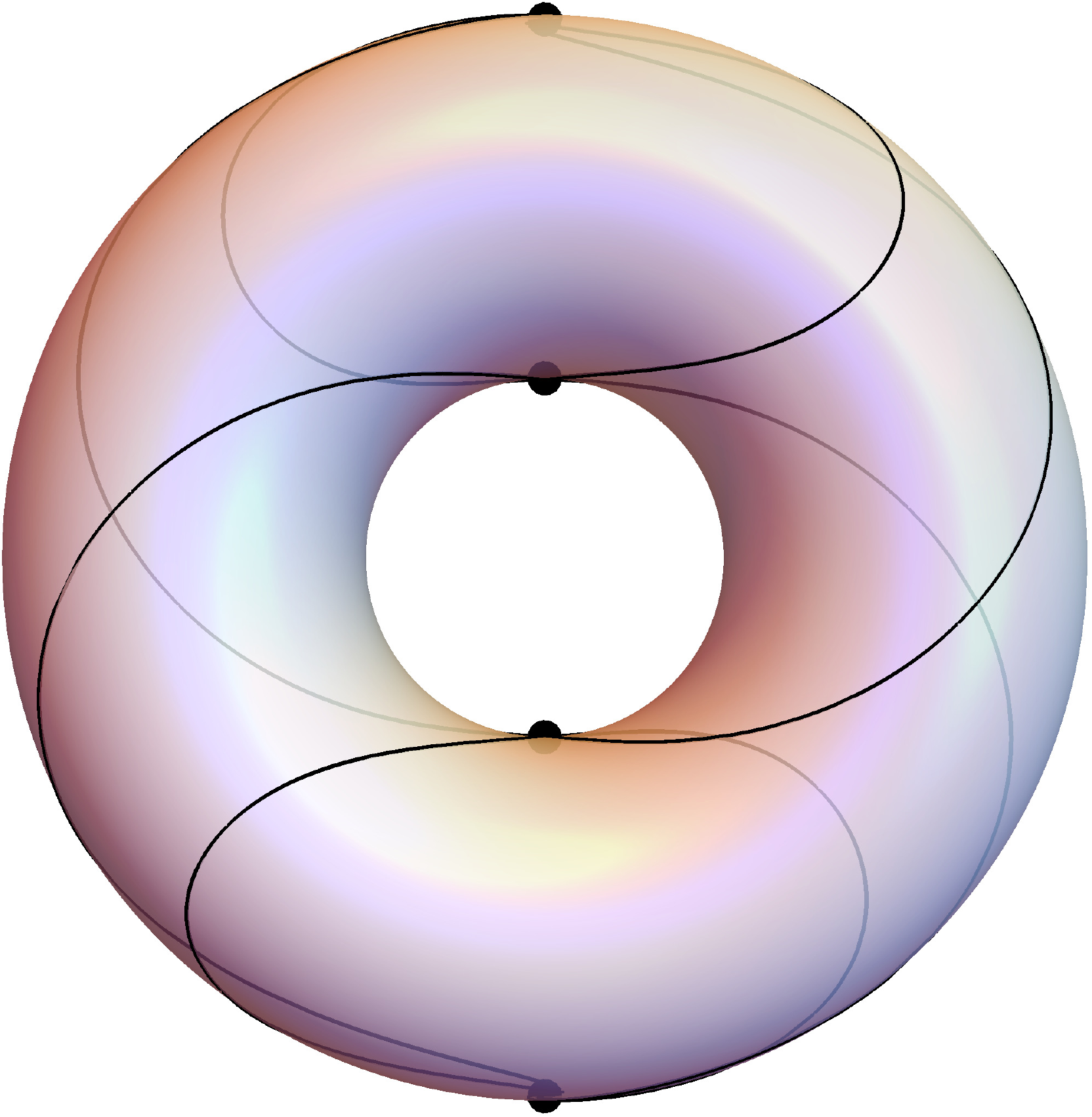} ~ ~
	\includegraphics[align=c, width=0.45\textwidth]{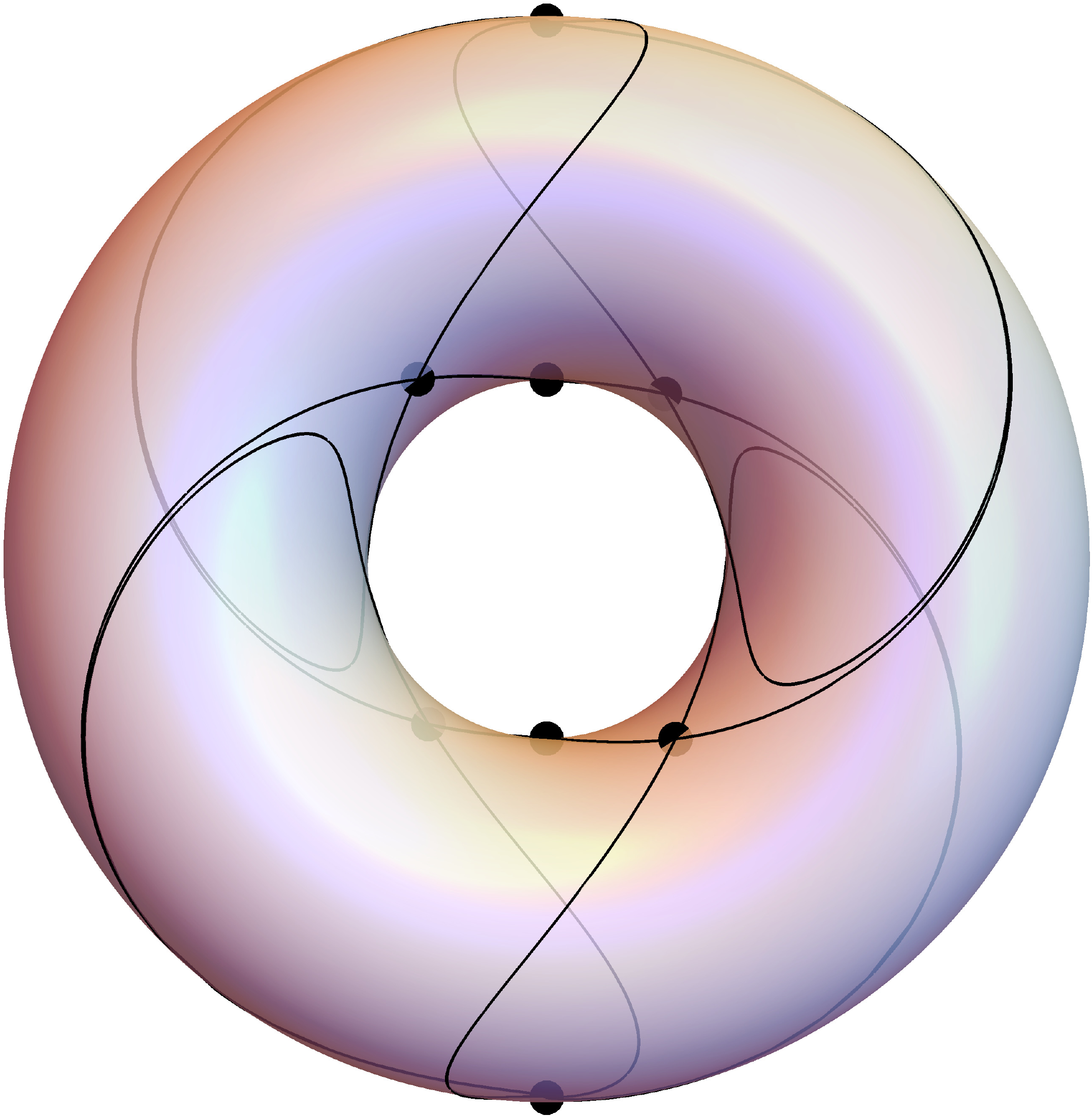}
	\caption{\label{verTor}\small The topological skeleton, i.e., the singular trajectories, of the characteristic foliations of two vertical tori: the torus on the left-hand side has four characteristic points, and the torus on the right-hand side has eight. }
\end{center}\end{figure}


\appendix


\section{On the center manifold theorem}\label{append:center-manif}

In the language of dynamical systems, a non-degenerate characteristic point $p$ is a \textit{hyperbolic equilibrium} for any characteristic vector field $X$, i.e., an equilibrium for which the real parts of the eigenvalues of $DX(p)$ are non-zero.
	For a hyperbolic equilibrium $p$, the Hartman-Grobman theorem and the Hartman theorem give a conjugation between the flow of $X$ and the flow of $DX(p)$, see \cite[Par.~2.8]{perko2012differential} and   \cite{Hartman1960OnLH}.
	\\
	
	Let us discuss here the case of a non-hyperbolic equilibrium, i.e., of a degenerate characteristic point.
Let $E$ be an open set of $\R^{n}$ containing the origin, and let $X$ be a vector field in $C^{1}(E,\R^{n})$ with $X(0)=0$. 
Due to the Jordan decomposition theorem,  we can assume that the linearisation of $X$ at the origin is 
\[
	DX(0)=
		\left(
		\begin{array}{ccc}
		C & & \\
		 & P & \\
		& & Q
		\end{array}
		\right),
\]		
where $C$ is a square $c\times c$ matrix with $c$ complex (generalised) eigenvalues with zero real part, $P$ with $p$ complex (generalised) eigenvalues with positive real part, and $Q$ with $q$ complex (generalised) eigenvalues with negative real part.
Thus, the dynamical system $\dot\gamma=X(\gamma)$ can be rewritten as
\[
	\left\{
	\begin{array}{l}
		\dot x= Cx+F(x,y,z) \\
		\dot y= Py+G(x,y,z) \\
		\dot z= Qz+H(x,y,z) 
		\end{array}
	\right.
\]
for $(x,y,z)\in \R^{c}\times\R^{p}\times\R^{q}=\R^{n}$, and for suitable functions $F$, $G$ and $H$ with $F(0)=G(0)=H(0)=0$ and  $DF(0)=DG(0)=DH(0)=0$.
	\\

The origin is a non-hyperbolic characteristic point if and only if $c\geq 1$. 
Under these hypotheses, the following theorem shows that there exists an embedded submanifold $\mathcal{C}$ of dimension $c$, tangent to $\R^{c}$, and invariant for the flow of $X$. Such manifold is called a \textit{central manifold} of $X$ at the origin.

\begin{proposition} [{\cite[Par.~2.12]{perko2012differential}}] \label{CenterManifold}
	Under the previous  notations,  there exists an open set $U\subset \R^{c}$ containing the origin, and two functions $h_{1}:U\to \R^{p}$ and $h_{2}:U\to \R^{q}$ of class  $C^{1}$ with $h_{1}(0)=h_{2}(0)= 0$ and $Dh_{1}(0)=Dh_{2}(0)=0$, and such that the map $ x \mapsto (x,h_{1}(x), h_{2}(x))$ parametrises a submanifold invariant for the flow of $X$. 
	Moreover, the flow of $X$ is $C^{0}$-conjugate to the flow of
\begin{equation}\label{eq:center-manifold-dynamic}
	\left\{ \begin{array}{l}
	\dot x = C x + F(x, h_{1}(x), h_{2}(x)) \\
	\dot y = P y \\
	\dot z = Q z.
	\end{array}\right.
	\end{equation}
\end{proposition}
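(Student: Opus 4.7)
The plan is to realise the center manifold as the graph of a fixed point of a contraction operator acting on a space of bounded Lipschitz functions, obtained from the variation-of-constants formula applied to the flow of $X$. I would then upgrade the graph to $C^1$ regularity, and finally argue for the $C^0$-conjugacy with the reduced system~\eqref{eq:center-manifold-dynamic}. Since the statement is local, the first step is to replace $F$, $G$, $H$ by globally defined smooth modifications agreeing with the originals on a small ball $B_\delta(0)\subset\R^n$: pick a smooth cut-off $\chi$ supported in $B_{2\delta}(0)$ with $\chi\equiv 1$ on $B_\delta(0)$, and set $F_\delta=\chi F$, $G_\delta=\chi G$, $H_\delta=\chi H$. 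Since $DF(0)=DG(0)=DH(0)=0$, the global Lipschitz constants of $F_\delta, G_\delta, H_\delta$ can be made as small as desired by shrinking $\delta$, and any invariant graph of the modified flow automatically yields the desired local invariant manifold inside $B_\delta$.

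Next I would turn invariance into a fixed-point equation. Given a candidate graph $(h_1,h_2):\R^c\to\R^p\times\R^q$ with $h_i(0)=0$ and Lipschitz constant $\leq L$, solve the reduced equation $\dot x = C x+F_\delta(x,h_1(x),h_2(x))$ to obtain the base trajectory $\xi(t,x_0)$ with $\xi(0,x_0)=x_0$. Writing the $y$- and $z$-components by variation of constants and selecting the \emph{unique bounded} solution on $[0,+\infty)$ and on $(-\infty,0]$ respectively, which is made possible by the hyperbolic dichotomy of $P$ and $Q$, the graph is invariant if and only if
\begin{align*}
h_1(x_0) &= -\int_{0}^{+\infty} e^{-Ps}\,G_\delta\bigl(\xi(s,x_0),\,h_1(\xi(s,x_0)),\,h_2(\xi(s,x_0))\bigr)\,ds,\\
h_2(x_0) &= \int_{-\infty}^{0} e^{-Qs}\,H_\delta\bigl(\xi(s,x_0),\,h_1(\xi(s,x_0)),\,h_2(\xi(s,x_0))\bigr)\,ds,
\end{align*}
for every $x_0\in\R^c$. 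For $L$ and $\delta$ small, the exponential bounds $\|e^{-Ps}\|\lesssim e^{-\alpha s}$ for $s\geq 0$ and $\|e^{-Qs}\|\lesssim e^{\alpha s}$ for $s\leq 0$, combined with the smallness of the Lipschitz constants of the modified nonlinearities, make the right-hand side a contraction on the Banach space of bounded Lipschitz pairs vanishing at $0$. Its unique fixed point provides continuous $h_1,h_2$ with $h_i(0)=0$.

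The main obstacle, and the step I would spend most care on, is upgrading the regularity to $C^1$ while ensuring $Dh_i(0)=0$. My plan is to write a second fixed-point equation, obtained by formally differentiating the previous one, for candidate differentials $Dh_1,Dh_2$ viewed as elements of a Banach space of bounded continuous matrix-valued functions, and check that it is again a contraction under the same smallness assumptions, now using in addition that $\xi(t,x_0)$ depends smoothly on $x_0$ and that its variational equation has controlled growth. One then verifies that the fixed point of this second contraction is genuinely the differential of the $h_i$ produced in the previous step, for example by difference-quotient estimates. The boundary condition $Dh_i(0)=0$ is automatic: the zero matrix solves the linearised fixed-point equation at the origin because $DG_\delta(0)=DH_\delta(0)=0$, and uniqueness of the contraction forces equality.

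Finally, for the $C^0$-conjugacy with~\eqref{eq:center-manifold-dynamic}, I would pass to the shifted coordinates $\tilde y=y-h_1(x)$, $\tilde z=z-h_2(x)$, which transform the invariant graph into the set $\{\tilde y=0=\tilde z\}$ and turn the $y$- and $z$-equations into hyperbolic perturbations of the linear systems $\dot{\tilde y}=P\tilde y$ and $\dot{\tilde z}=Q\tilde z$ with parameter $x$. A Hartman-type argument, applied uniformly in the center parameter $x$ using the contraction in $\tilde z$ for $t\to+\infty$ and in $\tilde y$ for $t\to-\infty$, then produces a local homeomorphism straightening trajectories onto those of the decoupled system~\eqref{eq:center-manifold-dynamic}. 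Only continuity of the conjugacy is asserted, so this last step is standard once the $C^1$ invariant graph and the hyperbolic splitting are in hand.
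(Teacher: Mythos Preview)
The paper does not prove this proposition; it is stated as a known result with a citation to \cite[Par.~2.12]{perko2012differential}, and no argument is given in the text. There is therefore nothing to compare against on the paper's side.

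Your sketch follows the standard Lyapunov--Perron approach (cut off the nonlinearities, set up the graph as a fixed point of an integral operator built from variation of constants and the hyperbolic dichotomy on the $y$- and $z$-components, then bootstrap to $C^{1}$ and invoke a fibred Hartman--Grobman argument for the conjugacy). This is a correct outline and is essentially the method behind the cited reference. One point worth tightening if you were to write it out fully: the base flow $\xi(\cdot,x_{0})$ depends on the candidate pair $(h_{1},h_{2})$ through the equation $\dot x = Cx + F_{\delta}(x,h_{1}(x),h_{2}(x))$, so the contraction estimate must also control the variation of $\xi$ with respect to $(h_{1},h_{2})$; this is where one typically works in an exponentially weighted space (with weight $e^{-\eta|t|}$ for small $\eta>0$) rather than in plain $L^{\infty}$, since $e^{Ct}$ may grow polynomially. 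With that adjustment the contraction closes, and the rest of your plan goes through.
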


In general, the central manifold $\mathcal{C}$ is non-unique. Note that the dynamic of the $x$-variable in equation (\ref{eq:center-manifold-dynamic}) is simply the restriction of $X$ to the center manifold $\mathcal{C}$. 
One can show that the trajectory converging to the origin approaches $\mathcal{C}$ exponentially fast: this is the asymptotic approximation property we used in~(\ref{eq:asyn-approx-property}).
	
\begin{proposition} [{\cite[p.~330]{AlbBress07}}] \label{approxCM}
	  Under the previous assumptions, let us denote $\mathcal{C}$ a center manifold of the flow of $X$ at the origin. 
	Then, for   every trajectory $ l(t)$  such that $l(t)\to 0 $ as $t \to +\infty$, there exists $\eta >0$ and a trajectory $\zeta(t)$ in the center manifold $\mathcal{C}$, such that
	\begin{equation*}
	e^{\eta t}|l(t)-\zeta(t)|_{\R^{n}} \to 0, \qquad as \quad  t \to +\infty.
\end{equation*}
\end{proposition}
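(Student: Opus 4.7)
The plan is to use the invariant-manifold machinery associated with the linearisation $DX(0)$, exploiting the spectral splitting $\R^{n}=\R^{c}\oplus\R^{p}\oplus\R^{q}$ to extract both a \emph{center-stable manifold} $\mathcal{C}^{cs}$, tangent to $\R^{c}\oplus\R^{q}$ at the origin, and a \emph{strong stable foliation} of $\mathcal{C}^{cs}$ whose leaves are transverse to $\mathcal{C}$ and along which the flow contracts at a uniform exponential rate. The trajectory $\zeta$ will then be obtained as the orbit of the unique intersection of the strong stable leaf through $l(0)$ with the centre manifold $\mathcal{C}$.

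First I would argue that $l$ must lie entirely on some center-stable manifold $\mathcal{C}^{cs}$. Indeed, applying the unstable manifold theorem to the inverse-time flow produces, through the origin, a local $C^{1}$ invariant manifold $\mathcal{C}^{cs}$ tangent to $\R^{c}\oplus\R^{q}$, and any trajectory with a nontrivial component transverse to $\mathcal{C}^{cs}$ is expelled from a neighbourhood of the origin at an exponential rate governed by the spectrum of $P$. Since $l(t)\to 0$, this forces $l(0)\in \mathcal{C}^{cs}$, and hence $l(t)\in \mathcal{C}^{cs}$ for all $t\geq 0$ by invariance. Next, using the spectral gap between the eigenvalues of $Q$, whose real parts are bounded strictly away from zero, and those of $C$, which lie on the imaginary axis, I would invoke the theory of invariant foliations to produce a strong stable foliation $\mathcal{F}^{ss}$ of $\mathcal{C}^{cs}$ whose leaves are $C^{1}$ invariant submanifolds of dimension $q$, tangent to $\R^{q}$ along $\mathcal{C}$, along which the flow contracts at rate $e^{-\eta t}$ for every positive $\eta$ strictly smaller than $\min\{-\mathrm{Re}\,\lambda:\lambda\in\sigma(Q)\}$. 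Its existence is obtained either by a graph-transform construction in Hadamard's style, or by a fibre-contraction argument in a Banach space of sections; both are standard in invariant-manifold theory.

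Once $\mathcal{F}^{ss}$ is in hand, I would let $\zeta(0)$ be the unique intersection of the leaf $\mathcal{F}^{ss}(l(0))$ with $\mathcal{C}$ and set $\zeta(t)=e^{tX}(\zeta(0))$. Invariance of the foliation gives $\zeta(t)\in \mathcal{F}^{ss}(l(t))\cap \mathcal{C}$, and the exponential-contraction property of the leaves yields $e^{\eta t}|l(t)-\zeta(t)|_{\R^{n}}\to 0$. The main obstacle will be constructing the strong stable foliation with the prescribed exponential rate: centre manifolds and their invariant foliations are generally only of finite regularity and are not unique, so some care is needed to set up the relevant function spaces. However, the strict separation between the purely imaginary centre spectrum and the negative-real-part stable spectrum provides the required spectral gap, and the non-uniqueness of $\mathcal{C}$ does not affect the conclusion since different choices yield compatible strong stable foliations and hence the same asymptotic approximation property.
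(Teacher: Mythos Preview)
The paper does not prove this proposition: it is stated in the appendix purely as a citation from Bressan's lecture notes \cite{AlbBress07}, with no argument given. So there is no ``paper's proof'' to compare against; the authors simply invoke the result.

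That said, your outline is the standard route found in the cited reference and in the invariant-manifold literature more broadly (Carr, Vanderbauwhede, Bressan): confine the trajectory to a center-stable manifold, build a strong stable foliation of that manifold transverse to the centre directions, and obtain $\zeta$ as the base point of the leaf through $l(0)$. One point deserves more care than you give it. You take $\mathcal{C}$ as \emph{given} in the statement, and then separately produce a center-stable manifold $\mathcal{C}^{cs}$ and a strong stable foliation $\mathcal{F}^{ss}$; but neither of these is unique, and there is no automatic reason your $\mathcal{C}$ lies inside your $\mathcal{C}^{cs}$, nor that the $q$-dimensional leaves of $\mathcal{F}^{ss}$ actually meet $\mathcal{C}$. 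The usual remedy is to carry out all constructions simultaneously after a cutoff of the nonlinearities, so that $\mathcal{C}$, $\mathcal{C}^{cs}$ and the strong stable fibration become global graphs over the corresponding linear subspaces and are compatible by construction; transversality of the $q$-dimensional fibres to the $c$-dimensional $\mathcal{C}$ inside the $(c{+}q)$-dimensional $\mathcal{C}^{cs}$ then gives the unique intersection you need. With that adjustment your sketch is correct.
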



\section{Tight and overtwisted distributions}\label{apx:tight}
In this section we briefly recall the theory of tight distributions. For a more comprehensive presentation, we refer to \cite[Par.~4.5]{geiges2008introduction}. In what follows $M$ is a 3-dimensional contact manifold, whose distribution is $D$.
	\\

To define an overtwisted disk, let us first consider an embedding of $\Delta=\{x\in \R^{2}:   |x|\leq1\}$ in  $M$, and denote $\Gamma=\partial \Delta$.
Let $\Gamma$ be horizontal with respect to the contact distribution $D$, i.e., $T\Gamma\subset D$. 
Then,  the normal bundle $N\Gamma=TM|_{\Gamma} / T\Gamma$ can be decomposed in two ways: the first with respect to the tangent space of $\Delta$, i.e.,
\begin{equation} \label{eq:surface-splitting}
	N\Gamma \cong \faktor{TM }{T\Delta} \oplus \faktor{T\Delta }{T\Gamma},
\end{equation}
and the second with respect to the contact distribution $D$, i.e.,
\begin{equation}\label{eq:distribution-splitting}
	N\Gamma \cong \faktor{TM }{D} \oplus \faktor{D }{T\Gamma}.
\end{equation}
A frame $(Y_{1},Y_{2})$ of $N\Gamma$ is called a \textit{surface frame} if it respects the splitting~(\ref{eq:surface-splitting}), i.e., $Y_{1} \in TM\mysetminus T\Delta$ and $Y_{2}\in T\Delta\mysetminus T\Gamma$; similarly, it is called a \textit{contact frame} if it respects the splitting~(\ref{eq:distribution-splitting}). 
Since the contact distribution is cooriented near $\Delta$, both bundles (\ref{eq:surface-splitting}) and (\ref{eq:distribution-splitting}) are trivial, thus one can always find  contact and surface frames.

 The Thurston--Bennequin invariant of $\Gamma$, noted $\tb(\Gamma)$, is the number of twists of a contact frame of $\Gamma$ with respect to a surface frame: the right-handed twists are counted positively, and the left-handed twists negatively (cf. for instance~\cite[Def.~3.5.4]{geiges2008introduction}). 
 Note that $\tb(\Gamma)$ is independent of the orientation of $\Gamma$.
 The requirement that the distribution $D$ does not twist along the boundary of $\Delta$ is equivalent to $\tb(\partial\Delta)=0$, i.e., the Thurston--Bennequin invariant of $\partial \Delta$ being zero. 
 
\begin{definition} \label{defn:overtwisted}
	 An embedded disk $\Delta$ in a cooriented contact manifold $(M,D)$ with smooth boundary $\partial \Delta$ is an \textit{overtwisted disk} if $\partial \Delta$ is a horizontal curve of $D$, $\tb(\partial\Delta)=0$, and there is exactly  one characteristic point in the interior of the disk.
\end{definition}

Note that the elimination lemma of Giroux allows to remove the condition that there is only one characteristic point in the interior of the owertwisted disk, as discussed for instance in \cite[Def.~4.5.2]{geiges2008introduction}.

\begin{definition} \label{defn:tight}
A contact structure $(M, D)$ is called  \textit{overtwisted} if it admits an overtwisted disk, and \textit{tight} otherwise.
\end{definition}


\bibliographystyle{alphaabbrv}
\bibliography{biblio-13.bib}


\end{document}